\numberwithin{equation}{section}
\newtheorem{theorem}[equation]{Theorem}
\newtheorem{proposition}[equation]{Proposition}
\newtheorem{lemma}[equation]{Lemma}
\newtheorem{definition}[equation]{Definition}
\newtheorem{remark}[equation]{Remark}
\newcommand{\z}{\mathbb Z}       
\newcommand{\n}{\mathbb N}       
\newcommand{\real}{\mathbb R}
\newcommand{\ds}{\displaystyle}
\newcommand{\bunderline}[1]{\underline{#1\mkern-2mu}}
\newcommand{\boverline}[1]{\overline{#1\mkern-2mu}}
\newcommand{\greatersup}{\; \boverline \succ \, \,}
\newcommand{\greaterinf}{\; \bunderline \succ \, \,}
\begin{document}

\title[Subsystems of transitive subshifts with linear complexity]{Subsystems of transitive subshifts with linear complexity}

\author[Dykstra]{Andrew Dykstra}
\address{Andrew Dykstra\\
Department of Mathematics\\
	Hamilton College \\
	Clinton, NY 13323  \\
	 USA}

\email{adykstra@hamilton.edu}

\author[Ormes]{Nicholas Ormes}
\address{Nic Ormes\\
Department of Mathematics\\
University of Denver\\
2390 S. York St.\\
Denver, CO 80208}
	 
\email{nic.ormes@du.edu}

\author[Pavlov]{Ronnie Pavlov}
\address{Ronnie Pavlov\\
Department of Mathematics\\
University of Denver\\
2390 S. York St.\\
Denver, CO 80208}

\email{rpavlov@du.edu}	 

\urladdr{www.math.du.edu/$\sim$rpavlov/}


\begin{abstract}
We bound the number of distinct minimal subsystems of a given transitive subshift of linear complexity, continuing work of Ormes and Pavlov \cite{OP}. We also bound the number of generic measures such a subshift can support based on its complexity function. Our measure-theoretic bounds generalize those of Boshernitzan \cite{B} and are closely related to those of Cyr and Kra \cite{CK}. \end{abstract}

\thanks{The third author gratefully acknowledges the support of NSF grant DMS-1500685.}
\keywords{Symbolic dynamics, linear complexity, generic measures}
\renewcommand{\subjclassname}{MSC 2010}
\subjclass[2010]{Primary: 37B10; Secondary: 37A25, 68R15}
\maketitle


\section{introduction} \label{introduction}

In this work, we study symbolically defined dynamical systems called subshifts. A subshift is defined by a finite set $\mathcal A$ (called an alphabet), the (left) shift action $\sigma$ on $\mathcal A^{\mathbb{Z}}$, and a set $X \subset \mathcal A^{\mathbb{Z}}$ of sequences that is closed in the product topology and $\sigma$-invariant. 
For convenience, we will refer to a subshift $(X, \sigma)$ only as $X$ since the dynamics are always understood to come from $\sigma$. (See Section~\ref{defs} for more details.)

Given a subshift $X$, let $c_X(n)$ denote the number of words of length $n$ that appear in $X$, i.e., the complexity function of $X$.  Assuming that $X$ is transitive and $c_X(n)$ grows linearly, we ask:  what is the interplay between $c_X(n)$ and the structure of the sub-dynamical systems of $X$?  We study this question in both the topological and measure-theoretic categories.  

In the topological category, we provide bounds on how large $c_X(n)$ must be in order to accommodate a given number of minimal subsystems in $X$.  If $X$ contains only one minimal subsystem, then, by the Morse-Hedlund Theorem (Theorem \ref{MH}), either $X$ is periodic or $c_X(n) \geq n+1$ for every $n$.  In \cite{OP}, Ormes and Pavlov show that, if $X$ is transitive and not minimal, then \begin{equation} \label{OPbound} \limsup_{n \rightarrow \infty} (c_X(n) - 1.5n) = \infty.\end{equation} Moreover they show that the bound (\ref{OPbound}) is {\em sharp} in the sense that the threshold $1.5n$ cannot be increased by any nondecreasing, unbounded function $g: \mathbb{N} \to \n$; for any such $g$, there is an example of a transitive non-minimal subshift $X$ where \[\limsup_{n \rightarrow \infty} (c_X(n) - (1.5n + g(n))) < \infty. \] 
   
For $X$ containing two or more minimal subsystems, in Section \ref{recurrent} we establish the following.

\begin{theorem} \label{maintheoremintro}
	Let $X$ be a transitive subshift which is the orbit closure of a recurrent point $x$, where $X$ has $j \geq 2$ proper minimal subsystems, exactly $i$ of which are infinite {\rm(}$0 \leq i \leq j${\rm)}.  Then the following bounds hold and are sharp: 
	\begin{enumerate}
		\item $\limsup_{n \to \infty} (c_X(n) - (j+i+1)n) = \infty$, and
		\item $\liminf_{n \to \infty} (c_X(n) - (j+i)n) = \infty$.
	\end{enumerate} 
\end{theorem}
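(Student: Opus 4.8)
The plan is to analyze the structure of the orbit of the recurrent point $x$ relative to the $j$ proper minimal subsystems $M_1,\dots,M_j$ of $X$. Since $x$ is recurrent, every left and right ray of $x$ spends long stretches shadowing words from the minimal subsystems, so I would first set up, for each sufficiently large $n$, a partition of the length-$n$ words appearing in $X$ into those that appear in some $M_k$ and those that do not. For each infinite minimal $M_k$, the Morse-Hedlund theorem (Theorem \ref{MH}) gives at least $n+1$ words of each length, and these word sets are \emph{disjoint} across distinct minimal subsystems for $n$ large (two distinct minimal subshifts share no long words, else they would share a common subsystem); the finite minimal subsystems contribute periodic words whose count is bounded but still positive. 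Summing, the words coming entirely from minimal subsystems already give roughly $in + (\text{bounded})$ with the $i$ infinite ones dominating, so the work is to account for the ``transitional'' words — those that witness the passage of the orbit of $x$ from near one minimal subsystem to near another, or from a minimal piece into the aperiodic bulk.

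The key mechanism, following \cite{OP}, is a Rauzy-graph / bispecial-word argument: the complexity difference $c_X(n+1) - c_X(n)$ equals the number of right-special words of length $n$ counted with multiplicity (one less than the number of right extensions), and one tracks how right-special words of length $n$ evolve as $n$ grows. Because $x$ is recurrent and transitive with $X$ non-minimal, its orbit must repeatedly leave and re-enter the minimal pieces; each minimal subsystem $M_k$ forces, through its own complexity, a steady supply of right-special words ``inside'' $M_k$ (at least one extra word per length if $M_k$ is infinite), and each distinct pair $(M_k, M_\ell)$ that the orbit transits between, together with the transitions into and out of the non-minimal part, forces additional right-special words not accounted for inside any single $M_k$. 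Carefully bookkeeping these contributions should yield: for infinitely many $n$, at least $j+i+1$ ``surplus'' units beyond the base count, giving part (1); and for all large $n$, at least $j+i$ such units persist (the $j$ comes from one unit of separation per minimal subsystem being ever-present once $n$ is large, the extra $i$ from the unbounded complexity of the infinite ones), giving part (2). The liminf statement is the more delicate one: I would show that the ``separating'' special words between consecutive minimal blocks cannot all disappear simultaneously for large $n$, using recurrence of $x$ to guarantee that every minimal subsystem is approached infinitely often on \emph{both} sides.

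For sharpness, I would construct explicit examples. Start with $i$ fixed infinite minimal subshifts of complexity exactly $n+1$ (Sturmian systems) and $j-i$ periodic orbits, then build a transitive $X$ as the orbit closure of a single recurrent point whose two-sided orbit is a bi-infinite concatenation that visits each minimal subsystem's language on arbitrarily long blocks, arranged (via a coding/substitution scheme as in \cite{OP}) so that the transitions are as ``efficient'' as possible — i.e., the extra special words created at junctions contribute exactly the minimal amount. One then checks $c_X(n) = (j+i)n + O(1)$ along a subsequence (showing the $\limsup$ bound in (1) is not improvable to $(j+i+1+g(n))n$ for unbounded $g$ — or rather, matching the stated form) while $c_X(n) - (j+i)n \to \infty$ is forced, confirming (2) cannot be improved to a $\liminf$ with coefficient $j+i+1$. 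I expect the construction to require care in interleaving the blocks so the point remains recurrent and $X$ genuinely transitive with exactly the prescribed minimal subsystems.

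The main obstacle I anticipate is the precise combinatorial bookkeeping of right-special words at the junctions: proving that transitions between minimal subsystems and into the aperiodic part contribute \emph{at least} the claimed number of surplus special words, without double-counting words that are special ``for more than one reason'' (e.g., a word that is right-special both inside $M_k$ and as a separator). Handling the finite (periodic) minimal subsystems uniformly with the infinite ones — since a periodic orbit has bounded complexity and contributes no interior special words for large $n$, yet still must be ``entered and exited'' — is the subtle point that makes the coefficient $j+i$ rather than $2j$ or $j$, and getting that exactly right in both the $\limsup$ and $\liminf$ directions is where the real effort lies.
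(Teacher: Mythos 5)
Your overall skeleton---counting right-special words, using Morse--Hedlund on the infinite minimal subsystems to extract an extra $in$, and building sharp examples by concatenating long blocks from Sturmian and periodic systems---matches the paper's strategy (the paper implements the $in$ term by a factor map collapsing each $M_p$ to a fixed point $a_p^\infty$ and counting $\pi$-preimages of the constant words, which is equivalent to your disjointness-of-languages observation). But there is a genuine gap at the quantitative heart of the argument. For part (1), your plan is to produce ``at least $j+i+1$ surplus units for infinitely many $n$,'' i.e.\ a pointwise lower bound on the number of right-special words at infinitely many lengths. That cannot work: even $\#RS(n)\geq j+i+1$ for \emph{every} large $n$ only gives $c_X(n)\geq (j+i+1)n+O(1)$, which leaves $\limsup(c_X(n)-(j+i+1)n)$ possibly finite; worse, in the sharp examples $\#RS(n)$ drops to $j$ (after the reduction) infinitely often, so no such pointwise bound is available. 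The paper's mechanism is an accumulation argument: in the hard cases it first proves a structure lemma showing the transitive point must cycle through the collapsed minimal pieces via \emph{forced} transition words, which yields at each scale $k$ a family of non-constant right-special words whose lengths sweep out intervals of total measure $L_k=\sum_p \ell_{k,p}$; an induction with a carryover term then gives $c_X(L_k)\geq (j+1)L_k+L_{k-1}$, and the divergence comes from $L_{k-1}\to\infty$, not from any per-length surplus.

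The same issue undermines your plan for part (2): showing that ``$j+i$ units persist for all large $n$'' gives only $\liminf(c_X(n)-(j+i)n)>-\infty$, not $=\infty$. In the paper the liminf bound is \emph{deduced from} the limsup bound: once $c_X(n_k)\geq(j+1)n_k$ for a sequence $n_k\to\infty$, the ever-present $j$ right-special constant words give $c_X(n)\geq (j+1)n_k+j(n-n_k)=jn+n_k$ for all $n>n_k$, whence the liminf diverges. So the logical dependence runs (1) $\Rightarrow$ (2), the reverse of what your outline suggests. Finally, on sharpness: you correctly flag that the interleaving must keep the point recurrent, but this is not a routine detail---a naive concatenation $1^{n_1}2^{n_2}\cdots$ visiting each block once is not recurrent, and the paper needs a specific self-similar revisiting pattern (governed by the ruler-type sequence $\omega=1213121412131215\cdots$) precisely so that earlier blocks recur while the count of right-special lengths stays controlled by the given function $g$.
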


The notion that the bounds in Theorem \ref{maintheoremintro} are {\em sharp} is the same as for (\ref{OPbound}), namely, that for any nondecreasing unbounded $g: \n \rightarrow \n$, there exists $X$ satisfying the hypotheses of the theorem for which the bounds do not hold when $g$ is added to $(j+i+1)n$ or $(j+i)n$ respectively.

In Section \ref{nonrecurrent} we consider the case of a general (not necessarily recurrent) transitive subshift. We establish bounds on the growth rate of $c_X(n)$ in the special cases where $X$ contains one or two minimal subsystems and then prove the following.
\begin{theorem} \label{maintheoremnonrecurrentintro}
	Let $X$ be a transitive subshift where $X$ has $j \geq 3$ minimal subsystems, exactly $i$ of which are infinite {\rm(}$0 \leq i \leq j${\rm)}.  Then
	\[\liminf_{n \to \infty}  (c_X(n)  -  (j+i)n) = \infty.\]  
\end{theorem}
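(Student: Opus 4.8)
The plan is to reduce to a non-recurrent transitive point and then analyze Rauzy graphs.

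Write $X=\overline{O(x)}$ for a transitive point $x$. If $x$ may be taken recurrent, Theorem~\ref{maintheoremintro}(2) applies verbatim (its hypothesis $j\geq 2$ holds), so assume no transitive point is recurrent and fix such an $x$. Then $\omega(x)$ and $\alpha(x)$ are proper, and $X=O(x)\cup\omega(x)\cup\alpha(x)$; a minimal subsystem meeting $O(x)$ would force $X$ minimal, so each of the $j$ minimal subsystems lies in $\omega(x)$ or in $\alpha(x)$. Since $j\geq 3$, after reversing time if necessary we may assume $\omega(x)$ contains two distinct minimal subsystems $Y_1,Y_2$; then the forward orbit of $x$ enters every neighborhood of $Y_1$ and of $Y_2$ at arbitrarily large times. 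An elementary lemma — any two disjoint subshifts have disjoint languages in all large enough lengths, proved by centering a common word inside points of each subshift and passing to a limit — shows that for large $n$ the sets $L_n(Y_1),\dots,L_n(Y_j)$ are pairwise disjoint, so these $Y_1\leftrightarrow Y_2$ ``excursions'' are genuine at every large scale.

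Let $G_n$ be the Rauzy graph of $X$ (vertices $L_n(X)$, edges $L_{n+1}(X)$). Because $x$ is transitive, every vertex and edge of $G_n$ occurs in the itinerary of $x$, so $G_n$ is connected and $c_X(n+1)-c_X(n)=\beta_1(G_n)-1$, where $\beta_1$ is the cycle rank. It therefore suffices to prove: (a) $\beta_1(G_n)\geq i+j+1$ for all large $n$; and (b) $\beta_1(G_n)\geq i+j+2$ for infinitely many $n$. Indeed, writing $c_X(n)-(i+j)n$ as a constant plus $\sum_{m<n}\bigl(\beta_1(G_m)-1-(i+j)\bigr)$, (a) makes every large term nonnegative and (b) makes infinitely many of them positive, forcing the sum to $\infty$. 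For (a): by the disjointness lemma the Rauzy subgraphs $G_n^{(1)},\dots,G_n^{(j)}$ of the $Y_k$ are pairwise vertex-disjoint for large $n$; each is connected by minimality, with $\beta_1(G_n^{(k)})=c_{Y_k}(n+1)-c_{Y_k}(n)+1$, which is $\geq 2$ when $Y_k$ is infinite (Theorem~\ref{MH}) and $=1$ when $Y_k$ is periodic (for $n$ past its period). Hence $\beta_1\bigl(\bigsqcup_k G_n^{(k)}\bigr)=\sum_k\beta_1(G_n^{(k)})\geq 2i+(j-i)=i+j$. A single $Y_1$-to-$Y_2$-and-back excursion of $x$ is a closed walk in $G_n$ through some vertex of $L_n(Y_1)$ that traverses an edge leaving $L_n(Y_1)$; that edge lies in no $G_n^{(k)}$ (its length-$n$ prefix is in $L_n(Y_1)$, so disjointness excludes $k\neq 1$, and it is not in $L_{n+1}(Y_1)$), so it supplies a cycle independent of the cycle space of $\bigsqcup_k G_n^{(k)}$, giving (a).

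The main obstacle is (b): excluding $\beta_1(G_n)=i+j+1$ for all large $n$, equivalently $c_X(n)=(i+j)n+C$ eventually. The mechanism is that $Y_1$ and $Y_2$ are distinct closed sets, so between consecutive transitions the forward orbit must spend stretches of length tending to infinity ``resolving'' each of them; this produces, for each large $n$, a family of size $f(n)\to\infty$ of distinct factors of $x$ of length $n$ that straddle a whole excursion — long pure-$Y_1$ runs with $Y_2$-material anchored at both ends — none of which lie in $\bigcup_k L_n(Y_k)$ nor among the $(i+j)n$ words accounted for by the subsystems together with single transitions; equivalently, these give $f(n)$ independent cycles of $G_n$ through transient edges, which is incompatible with $\beta_1(G_n)$ being eventually $i+j+1$. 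Making ``excursion lengths tend to infinity'' precise, extracting this growing family of transient factors, and verifying it is genuinely new across all configurations of the $Y_k$ (periodic or infinite, and however they are split between $\omega(x)$ and $\alpha(x)$, including degenerate cases such as $\alpha(x)$ containing a single further minimal subsystem) is the technical heart; the reductions and part~(a) are comparatively routine given Theorem~\ref{maintheoremintro}, Theorem~\ref{MH}, and the disjointness lemma.
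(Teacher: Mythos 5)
Your reduction to a non-recurrent transitive point (handling the recurrent case by Theorem \ref{maintheoremintro}(2)), the language-disjointness lemma, and part (a) of the Rauzy-graph argument are all sound. Part (a) gives $c_X(n+1)-c_X(n)=\beta_1(G_n)-1\geq i+j$ for all large $n$, hence only $\liminf_{n\to\infty}(c_X(n)-(i+j)n)>-\infty$. For that portion your route genuinely differs from the paper's: the paper collapses each minimal subsystem to a fixed point $\{a_p^\infty\}$ by a block map $\pi$, counts the $j$ right-special words $a_p^n$ in $\pi(X)$, and recovers the extra $in$ from preimage counts, whereas you extract $i+j$ directly from the cycle ranks of the minimal subsystems' Rauzy subgraphs plus one excursion edge.

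The genuine gap is part (b), which you flag as ``the technical heart'' but do not prove, and whose proposed mechanism would fail. You need $\beta_1(G_n)\geq i+j+2$ for infinitely many $n$, i.e., one additional unit of branching at infinitely many scales. Exhibiting a family of $f(n)$ distinct transient factors of length $n$ straddling excursions is not ``equivalently'' $f(n)$ independent cycles: a transient vertex lying on a path adds one vertex and one edge to $G_n$ and leaves $\beta_1$ unchanged, so no count of distinct length-$n$ words, however large, forces extra cycles (and indeed $c_X(n)=(i+j)n+C$ eventually is consistent with having on the order of $jn$ transient words of length $n$). Worse, if the mechanism did yield $f(n)\to\infty$ independent cycles it would prove $c_X(n+1)-c_X(n)\to\infty$, which is false: the sharpness examples in Section \ref{nonrecurrent} (the orbit closure of $0^\infty.1^{n_1}2^{n_2}\cdots$) have $\#RS(n)\leq j+2$ and hence bounded first differences. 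What is actually needed is a recurring branching configuration: a transient vertex of out-degree at least two for infinitely many $n$. The paper produces exactly this: in the direction where $\pi(x)$ is not eventually periodic, some $\{a_p^\infty\}$ lies in the omega-limit set, so maximal runs $ba_p^mb$ occur with $m$ arbitrarily large while $ba_p^{m+1}$ also occurs; thus $ba_p^m$ is right-special for infinitely many $m$, and Lemma \ref{countinglemma} converts this single extra right-special word per scale into $c_{\pi(X)}(n)\greaterinf jn$ and then $c_X(n)\greaterinf (j+i)n$. Your proof needs an argument of this kind---locating a specific recurring branching, not counting transient words---to close (b).
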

Of course, Theorem \ref{maintheoremnonrecurrentintro} implies that $$\limsup_{n \to \infty}  (c_X(n)  -  (j+i)n) = \infty$$ for transitive $X$ as well, and we show that this bound is sharp in the same sense as above.

Turning our attention to the measure-theoretic category, we consider the well-studied problem of bounding the number of ergodic measures that a given subshift can support. For example, in \cite{B}, Boshernitzan shows that if $X$ is minimal and \[\liminf_{n \to \infty}(c_X(n) - Kn) = -\infty,\] then $X$ can support at most $K - 1$ ergodic measures.  He also shows, again assuming minimality, that $X$ is uniquely ergodic provided that 
\[\limsup_{n \to \infty} \frac{c_X(n)}{n} < 3.\]  
Cyr and Kra, motivated by work of Katok \cite{K} and Veech \cite{V} on interval exchange transformations, extended Boshernitzan's work by considering arbitrary (not necessarily minimal) subshifts and nonatomic generic (not necessarily ergodic) measures \cite{CK}. Note that because they are working in such a general setting, their results cannot establish bounds on the number of atomic measures. Indeed, given any subshift $X$, one can always union $X$ with a fixed point to obtain a new subshift $Y$ with an additional (atomic) ergodic measure, where $c_Y(n) = c_X(n)+1$. 

In this paper we consider transitive systems and obtain bounds on the number of generic measures. In particular, we show the following in Section \ref{measures}.

\begin{theorem} \label{measuresupintro}
	Let $X = \overline{\mathcal O(x)}$ be a transitive subshift where $x$ is recurrent and aperiodic.  If \[\limsup_{n \to \infty} \frac{c_X(n)}{n} < 3,\] then $X$ is uniquely ergodic. 
\end{theorem}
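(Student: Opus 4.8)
The plan is to first pin down the structure of $X$ and then carry out a Boshernitzan-style counting argument, with recurrence of $x$ playing the role usually played by minimality. \emph{Reduction.} If $X$ is minimal, the statement is precisely Boshernitzan's theorem \cite{B}, so assume $X$ is not minimal. Since $X=\overline{\mathcal O(x)}$ with $x$ recurrent, if $X$ had two or more proper minimal subsystems, then Theorem~\ref{maintheoremintro}(1) would give $\limsup_n\bigl(c_X(n)-3n\bigr)\ge\limsup_n\bigl(c_X(n)-(j+i+1)n\bigr)=\infty$ (as $j\ge 2$ forces $j+i+1\ge 3$), contradicting $\limsup_n c_X(n)/n<3$. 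So $X$ has a unique minimal subsystem $M$; as $c_M(n)\le c_X(n)$ we get $\limsup_n c_M(n)/n<3$, hence $M$ is uniquely ergodic --- by \cite{B} if $M$ is infinite, trivially if $M$ is a periodic orbit --- with unique invariant measure $\nu$. Moreover, since $M$ is the only minimal subsystem of $X$, every periodic orbit of $X$ equals $M$; consequently any ergodic measure $\mu\ne\nu$ satisfies $\mu(M)=0$ (else $\mu(M)=1$ by ergodicity and $\mu=\nu$), so $\mu(X\setminus M)=1$, and $\mu$ is nonatomic (an ergodic measure with an atom is carried by a periodic orbit, which would lie in $M$). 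Thus it is enough to rule out a nonatomic ergodic measure $\mu$ with $\mu(X\setminus M)=1$.

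\emph{Set-up.} Assume such a $\mu$ exists. As $X\setminus M$ is a countable union of cylinders disjoint from $M$ and has full $\mu$-measure, fix $u\in L(X)$ with $[u]\cap M=\emptyset$ (equivalently $u\notin L(M)$) and $\mu([u])=\delta>0$. By the pointwise ergodic theorem, for $\mu$-a.e.\ point $y$ the word $u$ occurs in $y$ with asymptotic frequency $\delta$; hence for every small $\eta>0$ and all large $n$ there is a word in $L_n(X)$ with at least $(\delta-\eta)n$ occurrences of $u$, whereas no word of $L_n(M)$ contains $u$ at all.

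\emph{Counting.} Fix small $\eta>0$. For large $n$, partition $L_n(X)$ into three disjoint families: $\mathcal A_n:=L_n(M)$; $\mathcal B_n$, the words with at least $(\delta-\eta)n$ occurrences of $u$; and $\mathcal C_n$, the ``transitional'' words, consisting of a long $u$-free prefix that is a factor of a point of $M$, immediately followed by a block in which $u$ recurs with density near $\delta$. The target is to show, along a suitable sequence $n\to\infty$, that $|\mathcal A_n|\ge(1-\eta)n$ (Morse--Hedlund, Theorem~\ref{MH}, when $M$ is infinite), $|\mathcal B_n|\ge(1-\eta)n$, and $|\mathcal C_n|\ge(1-\eta)n$; summing then yields $c_X(n)\ge(3-3\eta)n$ along that sequence, so $\limsup_n c_X(n)/n\ge 3$, a contradiction. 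The bounds on $\mathcal B_n$ and $\mathcal C_n$ are to be extracted from a Boshernitzan-type analysis of return words and the Rauzy-graph structure of $X$, which the linear-complexity hypothesis keeps under control; the transitional words in $\mathcal C_n$ are where recurrence and transitivity of $x$ enter, since $\overline{\mathcal O(x)}=X\supsetneq M$ together with aperiodicity of $x$ forces the orbit of $x$ to contain arbitrarily long factors of points of $M$ with non-$M$ continuations, hence to move repeatedly from the $M$-regime into the $u$-rich regime; sliding a length-$n$ window across one such transition, indexed by the $\sim n$ possible lengths of the $u$-free prefix, produces the required $\sim n$ distinct elements of $\mathcal C_n$.

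\emph{Expected main obstacle.} The hard part is precisely this last counting, and in particular the case in which $M$ is a periodic orbit: then $|\mathcal A_n|$ is bounded, so the coefficient $3$ must be recovered from $\mathcal B_n$ and $\mathcal C_n$ alone, forcing one to isolate two essentially independent families of special/transitional words at the $M$--$\mu$ interface. Proving that the relevant return words are few (which linear complexity provides) while the length-$n$ words they generate are numerous and pairwise distinct is the technical core; in the minimal case treated in \cite{B} such words are supplied automatically by minimality, whereas here they must be built from the single recurrent generator $x$.
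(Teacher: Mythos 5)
Your reduction is exactly the paper's: minimal case via \cite{B}, two-or-more minimal subsystems excluded by Theorem~\ref{maintheoremintro}(1), hence a unique minimal subsystem $M$, and it then suffices to rule out a second (necessarily nonatomic) ergodic measure $\mu$ with $\mu(M)=0$. After that point, however, what you offer is a plan rather than a proof, and the plan has a concrete problem. You need, along some sequence of $n$, three essentially disjoint families of size $\sim n$ each. The family $\mathcal A_n=\mathcal L_n(M)$ is fine when $M$ is infinite. But your source of $\mathcal C_n$ (sliding a length-$n$ window across a single $M$-to-$\mu$ transition) produces only about $n$ words \emph{in total}, and these are split between ``long $u$-free prefix'' words (your $\mathcal C_n$) and ``$u$-rich'' words (which land in $\mathcal B_n$); so this argument alone gives $|\mathcal B_n|+|\mathcal C_n|\gtrsim n$, not $2n$, and hence only $c_X(n)\gtrsim 2n$. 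To get $|\mathcal B_n|\gtrsim n$ independently you would presumably invoke Morse--Hedlund on $\mathrm{supp}(\mu)$, but $\mathrm{supp}(\mu)$ is a closed invariant set and therefore contains the unique minimal subsystem $M$, so the $n+1$ words it guarantees may already be the words of $\mathcal A_n$ and need not be $u$-rich. You flag the periodic-$M$ case as the ``main obstacle,'' but there $|\mathcal A_n|$ is bounded and you would need $|\mathcal B_n|+|\mathcal C_n|\gtrsim 3n$ from these same unproved counts; that case is not a corner case --- after the paper's factor map $\pi$ it is the \emph{only} case, and it carries the entire content of the theorem.

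For comparison, the paper does not count three families of words at all. It collapses $M$ to a fixed point, reducing to $X\subseteq\{0,1\}^{\z}$ with unique minimal subsystem $\{0^\infty\}$ and a hypothetical ergodic $\mu\neq\delta_{0^\infty}$. Writing a generic point as $x_{[0,\infty)}=w_0^{(k)}0^{\geq n_k}w_1^{(k)}\cdots$, a frequency computation shows $|w_0^{(k)}|/n_k\to\infty$; the complexity hypothesis yields a logarithmically syndetic sequence $(n_k)$ with at most two right-special words of each length $n_k$, which forces the rigid form $w_0^{(k)}=a_kb_k(e_kb_k)^{m_k}d_k$ with $m_k\to\infty$; nesting ($e_{k+1}b_{k+1}e_{k+1}b_{k+1}$ a suffix of $(e_kb_k)^{m_k-1}$) then produces a limit point whose orbit closure avoids $0^\infty$, i.e.\ a second minimal subsystem --- a contradiction. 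That structural argument is precisely the technical core your proposal defers to ``a Boshernitzan-type analysis,'' so as written the proposal has a genuine gap at the step where the constant $3$ must actually be earned.
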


The following result is similar to a result obtained in \cite{CK}; see Section~\ref{measures} for more details.

\begin{theorem} \label{measureinfintro}
	Let $X = \overline{\mathcal O(x)}$ be a transitive subshift where $x$ is not eventually periodic in both directions. If \[\liminf_{n \to \infty}(c_X(n) - gn) = -\infty\] for $g \in \n$, then $X$ has at most $g-1$ generic measures.
\end{theorem}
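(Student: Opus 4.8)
The plan is to argue by contradiction with a Boshernitzan-type counting estimate, following the template of \cite{B} and \cite{CK}. Suppose $X$ carries $g$ pairwise distinct generic measures $\mu_1,\dots,\mu_g$, with $\mu_\ell$ generic for a point $y_\ell\in X$, meaning $\frac1N\sum_{k=0}^{N-1}\delta_{\sigma^k y_\ell}\to\mu_\ell$ in the weak-$*$ topology. I would show this forces a single constant $C$ with $c_X(n)\ge gn-C$ for all sufficiently large $n$, directly contradicting $\liminf_{n\to\infty}(c_X(n)-gn)=-\infty$; since the conclusion only concerns how many generic measures $X$ has, it is enough to derive this from any $g$ distinct ones.

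The link between the measures and the complexity function is that, for a point $y$ generic for $\mu$, the frequency of a fixed word $w$ among the length-$n$ windows of a long prefix of $y$ tends to $\mu([w])$; thus each $\mu_\ell$ is encoded by the empirically-realized probability vector $v_\ell(n)=(\mu_\ell([w]))_{w\in L_n(X)}$ on the set $L_n(X)$ of words of length $n$ appearing in $X$. I would first handle the atomic parts of the $\mu_\ell$: an atomic invariant measure is a convex combination of uniform measures on periodic orbits, distinct $\mu_\ell$'s involve distinct periodic orbits, and — because $X$ is transitive, so every periodic point is a limit of shifts of the transitive point — each such orbit genuinely contributes its full period's worth of words to $L_n(X)$ for every large $n$. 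This is precisely where transitivity is used and where we sidestep the obstruction described in the introduction (adjoining an isolated fixed point inflates $c_X$ by only $1$, but that is a non-transitive phenomenon); it is also where the hypothesis that $x$ is not eventually periodic in both directions matters, since it rules out the degenerate finite or eventually-bi-periodic $X$ for which the statement would fail (there $\liminf(c_X(n)-n)=-\infty$ while $X$ still carries a generic measure).

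The core is then a Boshernitzan-type lemma asserting that $g$ distinct generic measures on an aperiodic transitive subshift force $c_X(n)\ge gn-O(1)$ for all large $n$. The input is that non-atomicity makes $\max_{w\in L_n(X)}\mu_\ell([w])\to 0$ as $n\to\infty$, so the mass of $v_\ell(n)$ is spread over $\gtrsim n$ words, and that the vectors $v_\ell(n)$ are shift-consistent, $\mu_\ell([w])=\sum_a\mu_\ell([wa])=\sum_a\mu_\ell([aw])$; these must be combined — via the structure of right-special words underlying the Morse--Hedlund inequality $c_X(n+1)>c_X(n)$ (Theorem~\ref{MH}), now refined by the measures — to show that $g$ distinct $v_\ell(n)$ force $c_X(n)$ up by a full $n$ for each, not merely by the crude linear-independence count $c_X(n)\ge g$.

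The main obstacle is exactly this last step: extracting an additive linear gain of $n$ per measure from the mere distinctness of the $v_\ell(n)$, rather than the constant gain that distinctness alone gives, and doing so with all error terms uniform in $\ell$ and in $n$ so that the final bound $c_X(n)\ge gn-C$ holds with one fixed $C$ for all large $n$. Once that uniform bound is established the contradiction with $\liminf_{n\to\infty}(c_X(n)-gn)=-\infty$ is immediate, completing the proof.
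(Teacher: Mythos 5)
There is a genuine gap: your proposal reduces the theorem to its own contrapositive and then stops at exactly the point where the work has to be done. The statement ``$g$ distinct generic measures force $c_X(n)\ge gn-C$ for all large $n$'' \emph{is} the theorem (in contrapositive form), and you explicitly flag that you do not know how to extract the additive gain of $n$ per measure from distinctness of the frequency vectors $v_\ell(n)$ --- distinctness alone gives only a constant, and non-atomicity spreading mass over $\gtrsim n$ cylinders gives only $c_X(n)\gtrsim n$, not $\gtrsim gn$. Your treatment of the atomic case is also incorrect as stated: a generic measure supported on a periodic orbit of period $p$ contributes only $p$ words of each length $n\ge p$ to $\mathcal L_n(X)$, a \emph{constant}, not a linear gain; and distinct generic measures need not involve distinct periodic orbits (the paper's example $x=0^\infty.011000111100000\ldots$ is generic for $\tfrac12(\delta_{0^\infty}+\delta_{1^\infty})$, so a subshift can carry three distinct generic measures built from only two periodic orbits). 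So neither the atomic nor the non-atomic half of your plan actually produces the bound $c_X(n)\ge gn-C$.

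The paper's argument avoids this entirely and never proves a global lower bound on $c_X$. It works directly along a subsequence $(n_k)$ where, by Lemma~\ref{countinglemma} and the hypothesis $\liminf(c_X(n)-gn)=-\infty$, there are exactly $C\le g-1$ right-special words $b_k^{(1)},\dots,b_k^{(C)}$ of length $n_k$ and moreover $c_X(n_k)<2gn_k$ (via Boshernitzan's device of taking $n$ where $c_X(n)-gn$ attains a new minimum). Lemma~\ref{rtspecial} then shows every window of length $(2g+1)n_k$ in the (not eventually periodic) transitive point contains some $b_k^{(i)}$, and a short metric computation shows that for any generic point $x$ for $\mu$, the empirical measures $\nu_{n_k}^{\sigma^{m_k}x}$ over the length-$n_k$ window beginning at an occurrence of $b_k^{(i)}$ still converge to $\mu$; since these windows all begin with the same word $b_k^{(i)}$, after passing to subsequences $\mu$ must equal one of at most $C\le g-1$ limit measures $\nu^{(i)}$. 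If you want to salvage your approach, you would need to supply the missing combinatorial lemma; the right-special-word mechanism above is the standard way to do it, and it is cleaner to run it in the direct form the paper uses than to aim for a uniform-in-$n$ lower bound on $c_X(n)$.
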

Note that Theorem \ref{measureinfintro} does not imply Theorem \ref{maintheoremnonrecurrentintro}. 
Indeed, if $X$ contains $g$ minimal subsystems, then there are least $g$ generic measures on $X$. Theorem \ref{measureinfintro} would then imply that 
\[\liminf_{n \to \infty}(c_X(n) - gn) > -\infty,\] whereas Theorem \ref{maintheoremnonrecurrentintro} gives the stronger conclusion
\[\liminf_{n \to \infty}(c_X(n) - gn)  = \infty.\]

\section{preliminaries}

\subsection{Subshifts}\label{defs}

 We recall some basic definitions; for more information, see \cite{LM}. 

A {\em full shift} is a pair $(\mathcal A^\z, \sigma)$ where $\mathcal A$ is a finite alphabet, $\mathcal A^\z$ has the product of the discrete topology on $\mathcal A$, and $\sigma: \mathcal A^\z \rightarrow \mathcal A^\z$ is the left-shift defined by $\sigma(x)_i = x_{i+1}$ for each $x = (x_i)_{i \in \z} \in \mathcal A^\z$.  A {\em subshift} is a pair $(X, \sigma)$ where $X$ is a closed and $\sigma$-invariant subset of some $\mathcal A^\z$.  To conserve notation, we will often refer to the subshift $(X, \sigma)$ as simply $X$.

A subshift $X$ is {\em transitive} if there exists $x \in X$ such that $X = \overline{\mathcal O(x)}$, the closure of the orbit $\mathcal O(x) = \{\sigma^n(x) \; : \; n \in \z\}$.  We call such a point $x \in X$ a {\em transitive point}.  If $X = \overline{\mathcal O(x)}$ for every $x \in X$, then $X$ is {\em minimal}. A transitive subshift $X$ is {\em periodic} if it has a transitive point $x$ which is periodic, meaning that there exists $p \in \z$ such that $\sigma^p(x) = x$.  Note that a transitive subshift $X$ is periodic if and only if $X$ has finite cardinality.

Given a subshift $X$, a {\em word of length $n$ in $X$} is a block of symbols $w = w_1w_2\cdots w_n$ that occurs in some point $x \in X$, i.e., $w=x_i x_{i+1} \cdots x_{i+n-1}$ for some $i \in \z$.  Let $\mathcal L_n(X)$ denote the set of all words of length $n$ occuring in some point in $X$, and $\mathcal L(X) = \bigcup_{n = 1}^\infty \mathcal L_n(X)$.  The {\em complexity function} of $X$ is the function $c_X(n): \n \rightarrow \n$ that gives the cardinality of $\mathcal L_n(X)$.  If $X$ is transitive, then $X = \overline{\mathcal O(x)}$ for some $x \in X$, and in this case $c_X(n)$ is equal to the number of words of length $n$ in $x$.

For a symbol $a\in \mathcal{A}$ and $n \geq 1$, the expression $a^n$ denotes the word of length $n$ formed by concatenating $a$ with itself $n$ times. Correspondingly, $a^{\infty}$ denotes the infinite concatenation of $a$ with itself. Depending on the situation, $a^{\infty}$ may denote a bi-infinite sequence, a left-infinite sequence, or a right-infinite sequence. The choice of meaning should be clear from context. 

A point $x \in X$ is {\em recurrent} if every word in $x$ occurs at least twice (equivalently, infinitely often).  If every word in $x$ occurs infinitely often with uniformly bounded gaps between occurrences, then $x$ is {\em uniformly recurrent}.  Note that $x$ is uniformly recurrent if and only if $X = \overline{\mathcal O(x)}$ is minimal.



If $X$ and $Y$ are subshifts, then for any continuous $f: X \rightarrow Y$ such that $f \circ \sigma = \sigma \circ f$, there exist $m, a \in \n$ such that for every $x \in X$, $f(x)_0$ is determined by the word $x_{-m} \cdots x_0 \cdots x_a$.  In this case $f$ is called an $(m+a+1)$-{\em block map} with {\em memory} $m$ and {\em anticipation} $a$.  If such a map $f$ is surjective then it is a {\em factor map}. In this paper, when we define a factor map $f$ on a subshift $X$, it is understood to have codomain $f(X)$. In addition, any $(m+a+1)$-block map has an obvious associated action on finite words as well (for any $n$-letter word $w$, $f(n)$ has length $n - m - a$); we use $f$ to refer to this function also since usage should always be clear from context.

A word $w$ in $\mathcal{L}(X)$ is {\em right-special} if there exist $a, b \in \mathcal{A}$ such that $wa, wb \in \mathcal{L}(X)$ with $a\neq b$. Similarly, $w$ is {\em left-special} if there exist $a, b \in \mathcal{A}$ such that $aw, bw \in \mathcal{L}(X)$ with $a\neq b$. Let $RS_X(n)$ (or just $RS(n)$ if $X$ is understood) denote the set of right-special words in $X$. 

Let $x$ be an element of a subshift $X$. By the {\em omega-limit set of $x$}, we mean the set 
\[\omega(x) = \bigcap_{N \geq 1} \overline{\{ \sigma^n(x) : n \geq N \} }\]
For any $x \in X$, the set $\omega(x)$ is a closed and shift-invariant subset of $X$, so is itself a subshift. 

We say that a point $x$ in a subshift $X$ is {\em eventually periodic to the right}
if there exist integers $p>0$ and $N>0$ such that for all $i>N$, $x_{i} = x_{i+p}$. Similarly, we say that $x$ is  {\em eventually periodic to the left}
if there exist integers $p>0$ and $N>0$ such that for all $i<-N$, $x_{i} = x_{i-p}$.

\subsection{Sturmian subshifts}
There are several different approaches to defining Sturmian subshifts (see \cite{Fogg} for an introduction). We outline one such approach here. 

For any irrational $\beta$, define the map $R_\beta:[0, 1) \rightarrow [0, 1)$ by $R_\beta(x) = x + \beta \mod 1$. For any $x\in (0,1)$, define the sequence $s(x) \in \{0,1\}^{\mathbb{Z}}$ by 
\[s_n(x) = 
\begin{cases}
1 & \text{ if } R_{\beta}^n(x) \in [0,\beta) \\
0 & \text{ if } R_{\beta}^n(x) \in [\beta, 1 ). \\
\end{cases}\]
The bi-infinite sequence $s(x)$ is called a {\em Sturmian sequence} for $\beta$.
For any two-element set $\{a,b\}$, we call a subshift $X \subset \{a,b\}^{\mathbb{Z}}$ a {\em Sturmian subshift} if $X$ can be obtained as the orbit closure of a Sturmian sequence with $0$ replaced by $a$ and $1$ by $b$.  


\subsection{Bounds on the Complexity Function}\label{notation}

Here we introduce shorthand notation for bounds on the complexity function.

\begin{definition} \label{supbound}
Given a function $f: \n \rightarrow \real$, write $c_X(n) \greatersup f(n)$ if $$\limsup_{n \to \infty} (c_X(n) - f(n)) = \infty.$$
\end{definition}

\begin{definition} \label{infbound}
Given a function $f: \n \rightarrow \real$, write $c_X(n) \greaterinf f(n)$ if $$\liminf_{n \to \infty} (c_X(n) - f(n)) = \infty.$$ 
\end{definition}

\noindent Note that the bounds in Theorem \ref{maintheoremintro} can be rewritten using this notation as \begin{enumerate}
\item $c_X(n) \greatersup (j + i + 1)n$ and
\item $c_X(n) \greaterinf (j+i)n$,
\end{enumerate}
and the bound in Theorem \ref{maintheoremnonrecurrentintro} can be rewritten as $c_X(n) \greaterinf (j+i)n$.

As mentioned in the introduction, throughout this work we will say that a bound of the form $c_X(n) \greatersup f(n)$ (or $c_X(n) \greaterinf f(n)$) which holds under some hypotheses on $X$ is \textbf{sharp} if it fails when any nondecreasing unbounded $g: \mathbb{N} \rightarrow \n$ is added to $f(n)$, i.e., if for any such $g$ there exists a subshift $X$ satisfying the relevant hypotheses for which $c_X(n) \greatersup f(n) + g(n)$ (or $c_X(n) \greaterinf f(n) + g(n)$) is false.


\section{Transitive systems with a recurrent transitive point} \label{recurrent}

The main goal of this section is to prove Theorem \ref{maintheoremintro}, which assumes that $X$ contains two or more minimal subsystems.  But first we recall some existing results that can be used to treat $X$ containing a single minimal subsystem.  

\subsection{Single minimal subsystem}

If $X$ is itself minimal and not a periodic orbit, then the following two results establish that 
$$c_X(n) \geq n+1$$
for all $n \geq 1$ and that this bound cannot be improved. 
\begin{theorem}[\cite{MorseHed}] \label{SturmianTheorem}
	If $X$ is a Sturmian subshift, then 
	\[c_X(n) = n+1 \text{ for all } n \geq 1.\]	
\end{theorem}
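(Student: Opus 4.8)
The plan is to pass from words in $X$ to itineraries of the irrational rotation $R_\beta$ and count directly. Fix the irrational $\beta$ and set $I_1 = [0,\beta)$, $I_0 = [\beta, 1)$, so that, by definition of $s(x)$, we have $s_m(x) = w$ exactly when $R_\beta^m(x) \in I_w$. For $u = u_0 \cdots u_{n-1} \in \{0,1\}^n$ put $A_u = \bigcap_{j=0}^{n-1} R_\beta^{-j}(I_{u_j})$, the set of points whose length-$n$ itinerary under $R_\beta$ reads $u$; then $u$ occurs in $s(x)$ at position $k$ iff $R_\beta^k(x) \in A_u$. Since $X$ is the orbit closure of $s(x)$, the earlier remarks give that $c_X(n)$ equals the number of length-$n$ words of $s(x)$, and this in turn equals the number of the sets $A_u$ that the orbit $\{R_\beta^k(x) : k \in \z\}$ meets. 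As $R_\beta$ is a minimal rotation this orbit is dense, so it meets exactly those $A_u$ with nonempty interior; hence $c_X(n)$ is the number of atoms of positive length of the partition $\mathcal P_n := \bigvee_{j=0}^{n-1} R_\beta^{-j}\{I_0, I_1\}$.

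It then remains to count those atoms. Identifying $[0,1)$ with $\real/\z$, the partition $\{I_0, I_1\}$ is cut at the two points $0$ and $\beta$, so $R_\beta^{-j}\{I_0, I_1\}$ is cut at $-j\beta$ and $(1-j)\beta$; taking the union over $0 \le j \le n-1$, the cut points of $\mathcal P_n$ are exactly $\{k\beta \bmod 1 : -(n-1) \le k \le 1\}$. This is a set of $n+1$ points, pairwise distinct because $\beta$ is irrational, so it divides the circle into $n+1$ arcs, each of positive length and each equal to one of the $A_u$. Therefore $c_X(n) = n+1$ for all $n \ge 1$, the case $n = 1$ simply recording that both letters occur.

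The step I would be most careful about is the bookkeeping of cut points: one should verify that only one genuinely new cut point, namely $-n\beta$, appears in passing from $\mathcal P_n$ to $\mathcal P_{n+1}$ — the point $(1-n)\beta$ already being a cut point of $\mathcal P_n$ — which is what pins the count down to $n+1$, and which incidentally shows $X$ has a unique right-special word of each length. It is also worth noting explicitly that the argument does not depend on whether the chosen point $x$ happens to land on a cut point for some iterate: the atoms $A_u$ are fixed intervals, and density of the orbit alone forces each positive-length atom to be visited. Beyond this, everything used — minimality of irrational rotations and the itinerary description of $\mathcal L_n(X)$ — is routine.
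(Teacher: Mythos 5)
The paper does not prove this statement at all: it is quoted directly from Morse--Hedlund \cite{MorseHed}, so there is no internal argument to compare yours against. Your rotation-coding proof is the standard one and its overall architecture is sound: the reduction of $c_X(n)$ to counting the atoms $A_u$ of $\mathcal P_n = \bigvee_{j=0}^{n-1} R_\beta^{-j}\{I_0,I_1\}$ met by a dense orbit is correct (every nonempty atom is a union of cut-point arcs, hence has nonempty interior and is visited), and your bookkeeping of cut points $\{k\beta \bmod 1 : -(n-1)\le k\le 1\}$ is right.

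There is, however, one genuine gap: the assertion that the $n+1$ arcs are ``each equal to one of the $A_u$.'' What the cut-point count actually gives for free is only that each arc is \emph{contained} in a single atom (the itinerary is constant on arcs), hence the number of nonempty atoms is at most $n+1$; i.e., you get the upper bound $c_X(n)\le n+1$ but not the lower bound. To conclude equality you must show that distinct arcs carry distinct itineraries, equivalently that passing from $\mathcal P_n$ to $\mathcal P_{n+1}$ increases the number of nonempty atoms by \emph{exactly} one, not merely at most one. Your closing remark identifies the right mechanism (only $-n\beta$ is a new cut point) but only in the direction that caps the count; you also need the other direction, namely that $-n\beta$ lies in the interior of some arc of $\mathcal P_n$ (again by irrationality it is not among the existing cut points), and that the two sides of $-n\beta$ inside that arc receive different letters at position $n$ (one side lies in $R_\beta^{-n}(I_0)$, the other in $R_\beta^{-n}(I_1)$), so exactly one atom is split into two nonempty pieces and $c_X(n+1)=c_X(n)+1$ with $c_X(1)=2$. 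Alternatively, the lower bound follows at once from Theorem \ref{MH}: $s(x)$ is aperiodic since the frequency of the symbol $1$ equals the irrational $\beta$, so $c_X(n)\ge n+1$, and combined with your upper bound this finishes the proof. Either patch is short, but as written the equality $c_X(n)=n+1$ is asserted rather than derived.
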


\begin{theorem}[\cite{MorseHed}] \label{MH}
	Let $X$ be any subshift. If there exists $n \geq 1$ such that $c_X(n) \leq n$, then $X$ is a finite set of periodic points. 
\end{theorem}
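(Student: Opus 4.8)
The plan is to establish the statement directly, by showing that the hypothesis $c_X(n)\le n$ forces a uniform bound on the periods of all points of $X$. First I would dispose of the trivial reductions: if $X=\emptyset$ there is nothing to prove, and if $c_X(1)\le 1$ then only one symbol occurs in $X$, so $X$ is a single fixed point; hence I may assume $c_X(1)\ge 2$.

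Next comes the basic monotonicity input. For every $k$, the map $\mathcal L_{k+1}(X)\to\mathcal L_k(X)$ deleting the last letter is surjective, since any occurrence of a $k$-word inside a point of $X$ extends one step to the right; likewise for the map deleting the first letter. Thus $c_X$ is non-decreasing, and $c_X(k+1)=c_X(k)$ holds precisely when every $w\in\mathcal L_k(X)$ has a unique one-letter right extension, equivalently when $RS_X(k)=\emptyset$. A pigeonhole argument now locates such a $k$: since $c_X(n)=c_X(1)+\sum_{k=1}^{n-1}\bigl(c_X(k+1)-c_X(k)\bigr)$ and each summand is $\ge 1$, if no $k\in\{1,\dots,n-1\}$ had $c_X(k+1)=c_X(k)$ we would get $c_X(n)\ge c_X(1)+(n-1)\ge n+1$, contradicting $c_X(n)\le n$. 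Fix $m\in\{1,\dots,n-1\}$ with $c_X(m+1)=c_X(m)=:N$.

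I would then propagate the absence of special words upward. Since $c_X(m+1)=c_X(m)$ there are no right-special words of length $m$; but the length-$m$ suffix of a right-special word of length $m+1$ would itself be right-special, so $RS_X(m+1)=\emptyset$ too, i.e.\ $c_X(m+2)=c_X(m+1)$. Iterating, $c_X(k)=N$ for all $k\ge m$, and consequently both the delete-last-letter and the delete-first-letter maps $\mathcal L_{k+1}(X)\to\mathcal L_k(X)$ are bijections for every $k\ge m$; so every length-$m$ word of $X$ has a unique one-letter extension to the right and a unique one-letter extension to the left. Finally, for the periodicity conclusion, fix $x\in X$ and consider the $N+1$ windows $x_{[i,\,i+m)}$ for $i=0,1,\dots,N$: since $\lvert\mathcal L_m(X)\rvert=N$, two coincide, say $x_{[i,\,i+m)}=x_{[j,\,j+m)}$ with $0\le i<j\le N$. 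Applying uniqueness of the right extension repeatedly gives $x_{i+t}=x_{j+t}$ for all $t\ge 0$, and uniqueness of the left extension gives $x_{i-t}=x_{j-t}$ for all $t\ge 0$; hence $x$ is periodic with period $j-i\le N$. Since every point of $X$ is periodic with period at most $N$ and there are only finitely many such sequences over a finite alphabet, $X$ is a finite set of periodic points.

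The only real content lies in the propagation step, together with the observation that having unique extensions on \emph{both} sides (rather than merely eventual periodicity of each point to the right) is what upgrades the bounded complexity to honest periodicity; the pigeonhole step and the window-collision argument are routine. I do not anticipate a serious obstacle.
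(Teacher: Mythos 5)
The paper offers no proof of this statement: it is quoted as the classical Morse--Hedlund theorem and attributed to \cite{MorseHed}, so there is nothing internal to compare your argument against. That said, your proof is correct and is essentially the standard argument: the identity $c_X(k+1)-c_X(k)=\sum_{w\in\mathcal L_k(X)}(e(w)-1)\ge \#RS_X(k)\ge 0$ forces, under $c_X(n)\le n$, some level $m<n$ with no right-special words; the suffix observation propagates this to all $k\ge m$, so $c_X$ is eventually constant at some value $N$ and both one-sided extension maps become bijections; and the window-collision argument then yields periodicity of every point with period at most $N$, hence finiteness of $X$. Each step checks out, including the point you rightly flag as the crux: unique extendability on \emph{both} sides (which you get for free because the delete-first-letter map is a surjection between sets of equal cardinality $N$) is what converts eventual periodicity into genuine two-sided periodicity. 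The only stylistic remark is that your pigeonhole step already shows the stronger standard statement that $c_X(k)$ is eventually constant whenever it fails to be strictly increasing, but for the theorem as stated your argument is complete and correct.
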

Thus the Sturmian subshifts are examples of the lowest complexity subshifts that are not periodic (see \cite{P} for more). If $X$ is transitive and not minimal, then the following (sharp) bound was proved in \cite{OP}. 

\begin{theorem}[\cite{OP}] \label{OrmesPavlovTheorem}
Suppose $X$ is a transitive subshift with a recurrent transitive point $x$.
If $X$ is not minimal, then 
$$c_X(n) \greatersup 1.5n,$$ 
and this bound is sharp.
\end{theorem}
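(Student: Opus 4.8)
The plan is to prove Theorem \ref{OrmesPavlovTheorem} as a special case of the recurrent theorem (Theorem \ref{maintheoremintro}) combined with an independent argument for the case of a single minimal subsystem. More precisely, since $X$ is transitive, non-minimal, and has a recurrent transitive point $x$, the orbit closure of $x$ is all of $X$; I would first observe that $X$ must contain at least one \emph{proper} minimal subsystem $Y$. This follows because $\omega(x)$ is a nonempty closed shift-invariant subset of $X$ (nonempty since $x$ is recurrent, so $x \in \omega(x)$), and any nonempty subshift contains a minimal subsystem by a standard Zorn's lemma argument; if that minimal subsystem were all of $X$ then $X$ would be minimal, contradiction. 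Then I would split into two cases according to the number of proper minimal subsystems.

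If $X$ has $j \geq 2$ proper minimal subsystems, Theorem \ref{maintheoremintro}(1) gives $c_X(n) \greatersup (j+i+1)n$ with $0 \le i \le j$, and since $j \geq 2$ we get $(j+i+1)n \geq 3n > 1.5n$ eventually, so $c_X(n) \greatersup 1.5n$ follows immediately. The substantive case is therefore $j = 1$: $X$ has exactly one proper minimal subsystem $Y$. Here the strategy is to analyze the structure of $x$ relative to $Y$. Since $x$ is recurrent but not uniformly recurrent (as $X$ is not minimal, $x$ is not uniformly recurrent), one expects the word structure of $x$ to contain arbitrarily long words from $\mathcal L(Y)$ interspersed with "defects" coming from the fact that $\overline{\mathcal O(x)} \supsetneq Y$. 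The key quantitative input is a Morse–Hedlund-type count: the right-special words. Recall $c_X(n+1) - c_X(n) = \sum_{w \in RS_X(n)} (|\{a : wa \in \mathcal L(X)\}| - 1) \geq |RS_X(n)|$, so to show $c_X(n) \greatersup 1.5n$ it suffices to show that $|RS_X(n)| \geq n/2$ for infinitely many $n$ with the excess growing, i.e. $|RS_X(n)| - n/2 \to \infty$ along a subsequence. Since $Y$ is a subshift with $c_Y(n) \geq n+1$ (it is infinite — it must be infinite, else $X$ non-minimal transitive with a recurrent point forces... actually one should check whether $Y$ can be a periodic orbit, and handle that subcase), $Y$ already contributes right-special words; the point is to find \emph{additional} right-special words, roughly $n/2$ more of them, arising from the approach of the orbit of $x$ toward $Y$.

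The main obstacle, and the heart of the original Ormes–Pavlov argument, is precisely locating these extra right-special words and showing they are distinct from those of $Y$ and sufficiently numerous. The idea I would pursue: because $x$ is recurrent, each long word $u$ appearing in $x$ reappears; track a long word $v \in \mathcal L(Y)$ appearing in $x$ and consider the "first time" the orbit of $x$ leaves a neighborhood of $Y$ — this produces words of the form $v'$ (a prefix of a $Y$-word) followed by a symbol not extending it within $\mathcal L(Y)$, hence right-special in $X$ but "new." One then argues that as the length parameter grows, one accumulates a number of such new right-special words that grows linearly with slope at least $1/2$, and that the slope-$1/2$ contribution combines additively with the slope-$1$ contribution from $Y$'s own right-special structure only when those overlap — the sharp constant $1.5$ emerges because in the extremal case (realized by an appropriate Sturmian-based construction) the new right-special words share prefixes with the Sturmian right-special words, so one gets $n + n/2$ rather than $n + n$. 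Rather than reproducing this delicate combinatorial bookkeeping, since the full statement with sharpness is exactly Theorem 1.1 of \cite{OP}, I would cite \cite{OP} for Theorem \ref{OrmesPavlovTheorem} and use it as a black box, noting only that the sharpness half is witnessed by the family of examples constructed there (Sturmian subshifts with a single extra "grafted" point), which we will in any case revisit and generalize in Section \ref{recurrent} when proving sharpness of Theorem \ref{maintheoremintro}.
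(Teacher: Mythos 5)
This statement is imported verbatim from \cite{OP} and the paper offers no proof of it, so your ultimate decision to cite \cite{OP} as a black box is exactly what the paper does (your preliminary reduction of the $j\ge 2$ case to Theorem \ref{maintheoremintro} is harmless but unnecessary). One caution about your exploratory sketch for the $j=1$ case, which you do not rely on: the extremal examples for the $1.5n$ bound cannot be ``Sturmian-based'' in the sense of containing an infinite minimal subsystem, since Theorem \ref{Theorem2.5} shows that a properly contained infinite minimal subsystem forces $c_X(n) \greatersup 2.5n$; the sharp examples in \cite{OP} (like those in Section \ref{sharpnessof1}) have a single periodic minimal subsystem, with the extra $0.5n$ coming from right-special words straddling the transition blocks.
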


Combining the results of \cite{OP} and techniques of the proof of Theorem \ref{maintheoremintro}, we will be able to establish the following. 

\begin{theorem} \label{Theorem2.5}
Suppose $X$ is a transitive subshift with a recurrent transitive point $x$.
If $X$ properly contains an infinite minimal subsystem, then 
$$c_X(n) \greatersup 2.5n,$$ and this bound is sharp.
\end{theorem}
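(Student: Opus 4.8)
The plan is to combine the structural results of \cite{OP} (specifically Theorem~\ref{OrmesPavlovTheorem} and the machinery behind it) with a finer analysis of how an infinite minimal subsystem $Y \subsetneq X$ forces extra right-special words. The overall strategy is the same one that will be used to prove Theorem~\ref{maintheoremintro}: control $c_X(n)$ through the cardinality of $RS_X(n)$, since for any subshift $c_X(n+1) - c_X(n) = \sum_{w \in RS_X(n)} (\#\{a : wa \in \mathcal{L}(X)\} - 1) \geq |RS_X(n)|$, and then exhibit enough distinct right-special words of each length. Since $X$ properly contains the infinite minimal subsystem $Y$, we already get $c_Y(n) \geq n+1$ from Theorem~\ref{MH}, so $Y$ contributes at least $n$ to the ``right-special budget'' just on its own — i.e., $|RS_Y(n)| \geq 1$ for all $n$ (in fact $\sum$ over $RS_Y$ of excess is $\geq 1$). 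The point is to upgrade the count so as to reach slope $2.5$.

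First I would set up the recurrence structure: since $x$ is a recurrent transitive point and $X \neq Y$, there is a word $u \in \mathcal{L}(X) \setminus \mathcal{L}(Y)$, and by recurrence $u$ reappears in $x$. Following \cite{OP}, one analyzes the ``first return'' structure of $x$ to the cylinder over $u$ and to the minimal subsystem $Y = \omega(x)$ (or whichever infinite minimal subsystem is contained properly; note $Y$ need not equal $\omega(x)$, so I would first argue we may reduce to a sub-orbit-closure situation, or handle $Y \subsetneq \omega(x)$ separately). The key mechanism: because $x$ spends arbitrarily long stretches shadowing points of $Y$ but also contains the non-$Y$ word $u$ infinitely often, for each large $n$ one produces (i) roughly $n$ right-special words coming from $Y$ itself, (ii) roughly $n$ more right-special words of the form ``suffix of a $Y$-orbit piece, extended by a non-$Y$ transition,'' arising from the boundary between the $Y$-shadowing blocks and the excursions — this is exactly the $0.5n$ that \cite{OP} extracts to get past $1.5n$, but here it should come in at full weight $n$ because $Y$ is infinite, not just because $X$ is non-minimal; and (iii) an additional $0.5n$ from the self-similar/return-word structure of the excursions, as in the sharp $1.5n$ argument applied ``on top of'' $Y$. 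Summing (i)+(ii)+(iii) gives $c_X(n) - c_X(n_0) \geq \sum_{k=n_0}^{n-1}(\text{something} \geq 2.5)$ along a subsequence, i.e. $c_X(n) \; \boverline \succ \, \, 2.5n$.

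For sharpness, I would construct an explicit example: take a Sturmian subshift $Y$ (complexity $n+1$) and build $X$ as the orbit closure of a recurrent point that shadows $Y$ for longer and longer runs, inserting carefully controlled short ``marker'' excursions between the runs whose positions encode a second Sturmian-like (or low-complexity non-minimal) structure. One arranges that the marker insertions add only about $1.5n$ to the complexity (as in the sharpness example for Theorem~\ref{OrmesPavlovTheorem}), so that $c_X(n) = 2.5n + O(\text{slowly growing})$, and in particular $c_X(n) \; \boverline \succ \, \, 2.5n + g(n)$ fails for any prescribed nondecreasing unbounded $g$. The recurrence of the chosen point must be checked, as must $Y \subsetneq X$ with $Y$ infinite minimal.

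The main obstacle I anticipate is step (ii)/(iii): carefully bookkeeping the right-special words so that the contributions from $Y$, from the $Y$-to-excursion boundaries, and from the excursion structure itself are genuinely \emph{distinct} for each length $n$ (no double-counting), and that each family has size at least $n$ (resp.\ $0.5n$) for infinitely many $n$. This requires a clean description of how a long word of $X$ decomposes into $Y$-blocks and excursion-blocks — essentially a Rauzy-graph or return-word analysis relative to the subsystem $Y$ — together with the observation that the ``derived'' system obtained by collapsing $Y$-blocks is itself transitive non-minimal, so that Theorem~\ref{OrmesPavlovTheorem} can be invoked on it to supply the final $0.5n$. Getting the reduction to $\omega(x)$ (versus an arbitrary properly-contained infinite minimal $Y$) and the transitivity of the derived system precise is where the real work lies; the complexity arithmetic itself is routine once the combinatorial picture is in place.
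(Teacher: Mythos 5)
Your plan diverges substantially from the paper's proof, and as written it has a real gap at its quantitative core. The paper does not count right-special words of $X$ directly at all. Instead it picks $r$ with $\mathcal{L}_r(M)\neq\mathcal{L}_r(X)$ and applies the $r$-block factor map $\pi$ sending $z_k\cdots z_{k+r-1}\in\mathcal{L}_r(M)$ to $0$ and everything else to $1$; then $\pi(X)$ is transitive with recurrent transitive point $\pi(x)$ and is not the single fixed point $\{0^\infty\}$, so Theorem~\ref{OrmesPavlovTheorem} gives $c_{\pi(X)}(n)\greatersup 1.5n$. The missing $n$ is recovered exactly as in Lemma~\ref{factorlemma}: the constant word $0^{n-r+1}$ has at least $c_M(n)\geq n+1$ preimages in $\mathcal{L}_n(X)$ (Morse--Hedlund applied to the infinite minimal $M$), while every other word has at least one, so $c_X(n)\geq c_{\pi(X)}(n-r+1)+n$. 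Note that this reduction makes your worry about $M$ versus $\omega(x)$ moot, and that the slope-$1$ contribution of $M$ enters as \emph{preimage multiplicity of a single collapsed word}, not as right-special words of $X$.

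That last point is where your accounting breaks down. A Sturmian $Y$ has exactly one right-special word of each length, so your family (i) contributes slope $1$ to $\sum_\ell \#RS_X(\ell)$ and nothing more; the $n+1$ words of $\mathcal{L}_n(Y)$ are already fully explained by that single branching per length. You then need slope $1.5$ from families (ii) and (iii), but your claim that the boundary family (ii) ``comes in at full weight $n$ because $Y$ is infinite'' is not justified and is the crux: for a given length $n$ the length-$n$ suffixes of distinct $Y$-shadowing blocks may coincide, and a right-special word of the ``derived'' system need not lift to a right-special word of $X$ distinct from those in (i) (the two extensions upstairs can project to the same extension downstairs, and vice versa). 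So the claimed $n+n+0.5n$ decomposition is asserted rather than proved, and the part you defer as ``where the real work lies'' is precisely the step the factor-map argument exists to avoid. Your sharpness construction, by contrast, is essentially the paper's: one takes the sharp examples for Theorem~\ref{OrmesPavlovTheorem} and stitches in Sturmian blocks as in Section~\ref{sharpnessi>0}, which raises the complexity by exactly $n$; that part of your proposal is sound in outline.
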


We postpone the proof of Theorem \ref{Theorem2.5} until after the proof of Theorem  \ref{maintheoremintro}.

\subsection{Multiple minimal subsystems}

For $j \geq 2$ and $0 \leq i \leq j$, consider the set of subshifts $S = S(j,i)$, where $X \in S(j,i)$ if and only if $X$ is transitive with a recurrent transitive point, and $X$ has $j$ distinct minimal subsystems, exactly $i$ of which are infinite. To prove Theorem \ref{maintheoremintro} for $X \in S(j, i)$, we first reduce to the $i = 0$ case via factor maps. 

\subsubsection{Reduction to the $i=0$ case} \label{pimap} 
We define a factor map $\pi$ that maps the $j$ distinct minimal subsystems of $X$ to $j$ distinct points, which are fixed by $\sigma$.
Lemma \ref{factorlemma} will provide an inequality between the complexity sequences for $X$ and $\pi(X)$, which will allow us to simply work with $\pi(X)$ moving forward. 

Let $M_1, \ldots, M_j$ denote the minimal subsystems of $X$, where, without loss of generality, $M_1, \ldots, M_i$ are infinite and $M_{i+1}, \ldots, M_j$ are finite. Since the sets $M_1, \ldots , M_j$ are pairwise disjoint closed subsets of $X$, there is an $r\geq 1$ such that the sets 
$\mathcal{L}_r(M_1),\ldots , \mathcal{L}_r(M_j)$ are pairwise disjoint. Fix such a value of $r$; pick $j$ distinct symbols $a_1, \ldots, a_j$ that do not occur in $x$; and define an $r$-block map $\phi$ with domain $X$, memory zero, and anticipation $(r-1)$ as follows:  \[\phi(y)_q = \begin{cases} a_p & \mbox{ if $y_q \cdots y_{q+r-1} \in \mathcal L_r(M_p)$;} \\ y_q & \mbox{ otherwise.} \end{cases}\] Now pick a symbol $b$ that does not occur in $\phi(x)$, and define a $2$-block map $\psi$ with domain $\phi(X)$, memory zero, and anticipation $1$ as follows:  \[\psi(z)_q = \begin{cases} b & \mbox{ if $z_qz_{q+1} \in \{a_p s, sa_p\}$ for some $p$ and $s \neq a_p$;} \\ y_q & \mbox{ otherwise.} \end{cases}\]   Note that post-composing $\phi$ with $\psi$ has the effect of ensuring that words of the form $a_p^n$ are always preceded and followed by the ``marker" symbol $b$.  Define $\pi = \psi \circ \phi$.  Then the minimal subsystems of $\pi(X)$ are simply the one-point sets $\pi(M_p) = \{a_p^\infty\}$.  If $X \in S(j,i)$, it follows that $\pi(X) \in S(j, 0)$.  

\begin{lemma} \label{factorlemma}
If $X \in S(j,i)$ with $j \geq 2$ and $\pi$ is the factor map defined above, then, for every $n > r$, $c_X(n) \geq c_{\pi(X)}(n-r) + in$.
\end{lemma}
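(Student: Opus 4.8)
The plan is to exploit the fact that the factor map $\pi$ crushes each of the $i$ infinite minimal subsystems $M_1,\dots,M_i$ down to a single fixed point, so that the induced map on length-$n$ words is badly non-injective over the corresponding constant words, and to quantify this non-injectivity using the Morse--Hedlund bound.

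First I would note that, since $\phi$ has memory $0$ and anticipation $r-1$ while $\psi$ has memory $0$ and anticipation $1$, the composition $\pi=\psi\circ\phi$ is an $(r+1)$-block map with memory $0$ and anticipation $r$; hence for each $n>r$ it induces a map on words $\pi_n:\mathcal L_n(X)\to\mathcal L_{n-r}(\pi(X))$, which is onto because $\pi$ is a factor map. Grouping $\mathcal L_n(X)$ according to the fibers of $\pi_n$ then gives
\[
c_X(n)=\sum_{v\in\mathcal L_{n-r}(\pi(X))}\bigl|\pi_n^{-1}(v)\bigr|\;=\;c_{\pi(X)}(n-r)+\sum_{v}\Bigl(\bigl|\pi_n^{-1}(v)\bigr|-1\Bigr),
\]
so the lemma reduces to showing that the total overcounting $\sum_v\bigl(|\pi_n^{-1}(v)|-1\bigr)$ is at least $in$.

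The heart of the argument is the claim that for each $p\le i$ and each $w\in\mathcal L_n(M_p)$ one has $\pi(w)=a_p^{\,n-r}$. Indeed, every length-$r$ subword of $w$ lies in $\mathcal L_r(M_p)$, which by the choice of $r$ is disjoint from every other $\mathcal L_r(M_q)$, so $\phi(w)=a_p^{\,n-r+1}$; and $\psi$ leaves a pure $a_p$-run unchanged, so $\psi(\phi(w))=a_p^{\,n-r}$. Consequently $\mathcal L_n(M_p)\subseteq\pi_n^{-1}(a_p^{\,n-r})$, and since $M_p$ is an infinite subshift, Theorem~\ref{MH} forces $c_{M_p}(n)\ge n+1$, whence $|\pi_n^{-1}(a_p^{\,n-r})|\ge n+1$. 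The constant words $a_1^{\,n-r},\dots,a_i^{\,n-r}$ are pairwise distinct (the $a_p$ are distinct symbols and $n-r\ge 1$) and all lie in $\mathcal L_{n-r}(\pi(X))$, so
\[
\sum_{v}\Bigl(\bigl|\pi_n^{-1}(v)\bigr|-1\Bigr)\;\ge\;\sum_{p=1}^{i}\Bigl(\bigl|\pi_n^{-1}(a_p^{\,n-r})\bigr|-1\Bigr)\;\ge\;in,
\]
and combining this with the previous display yields $c_X(n)\ge c_{\pi(X)}(n-r)+in$.

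I expect the only point requiring care to be the middle paragraph: verifying that $\phi$ genuinely sends every length-$n$ word of $M_p$ to a single $a_p$-run (this is exactly where pairwise disjointness of the sets $\mathcal L_r(M_q)$ is used) and that post-composing with $\psi$ does not disturb such a run. This is a routine unwinding of the definitions of $\phi$ and $\psi$ rather than a substantive obstacle; everything else is elementary fiber-counting together with a single appeal to the Morse--Hedlund theorem.
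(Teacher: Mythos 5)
Your proof is correct and follows essentially the same approach as the paper: count the fibers of the induced map on words and use the Morse--Hedlund bound $c_{M_p}(n)\ge n+1$ to show that each constant word $a_p^{\,n-r}$ has at least $n+1$ preimages. The only cosmetic difference is that you treat $\pi=\psi\circ\phi$ as a single $(r+1)$-block map, whereas the paper handles $\psi$ separately with the crude bound $c_{\phi(X)}(n)\ge c_{\pi(X)}(n-1)$ and then does the fiber count for $\phi$ alone; both give the stated inequality.
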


\begin{proof}
Note that for any $q$-block factor map from a subshift $X$ onto a subshift $Y$, a word of length $m$ in $X$ determines a word of length $m-q+1$ in $Y$. It follows that $c_{X}(m) \geq c_Y(m-q+1)$. 
	
Applying this to our situation, since $\psi$ is a $2$-block map, $c_{\phi(X)}(n) \geq c_{\pi(x)}(n-1)$. To complete the proof, it is enough to show that $c_X(n) \geq c_{\phi(X)}(n-r+1)+ in$. For any $n$, let $W_0$ denote the set of words in $\phi(X)$ of the form $a_p^{n-r+1}$ where $n \geq r$ and $1 \leq p \leq i$, and let $W_1 = \mathcal L_{n-r+1}(\phi(X)) \setminus W_0$.  Each word in $W_1$ has at least one $\phi$-preimage in $\mathcal L_n(X)$, but each $a_p^{n-r+1} \in W_0$ has at least $n+1$ preimages in $\mathcal L_n(M_p)$, since any word in $\mathcal L_n(M_p)$ is a preimage of $a_p^{n-r+1}$, and $c_{M_p}(n) \geq n+1$ for all $n$, using Theorem \ref{MH} and the fact that $M_p$ is infinite.
\end{proof}

\begin{lemma} \label{RSlemma}
For each $p \in \{1, \ldots, j\}$ and $n \in \n$, $a_p^n$ is both right- and left-special in $\pi(X)$.
\end{lemma}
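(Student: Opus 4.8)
The plan is to exhibit, for each fixed $p$ and $n$, two distinct right‑extensions of $a_p^n$ in $\pi(X)$ --- one of which will always be $a_p$ --- and, symmetrically, two distinct left‑extensions.

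The $a_p$‑extension is immediate: by construction $\pi(M_p) = \{a_p^\infty\}$ is a subshift of $\pi(X)$, so $a_p^{n+1} \in \mathcal{L}(\pi(X))$, and hence $a_p^n$ is both preceded and followed by $a_p$ somewhere in $\pi(X)$. It therefore suffices to produce one occurrence of $a_p^n$ in $\pi(X)$ immediately followed by a symbol other than $a_p$, and (separately) one occurrence immediately preceded by a symbol other than $a_p$. Since $\pi(x)$ is a transitive point of $\pi(X)$, every word of $\pi(X)$ --- in particular $a_p^n$ --- occurs in $\pi(x)$ itself, so it is enough to locate these occurrences inside the single point $\pi(x)$. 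Two facts about $\pi(x)$ drive the argument: it is recurrent (the image of a recurrent point under a factor map is recurrent, since each word of $\pi(x)$ is the image under $\pi$ of a word of $x$, which recurs), and $\pi(x) \neq a_p^\infty$ (otherwise $\pi(X) = \overline{\mathcal{O}(\pi(x))} = \{a_p^\infty\}$ would be minimal, contradicting $j \geq 2$).

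For right‑specialness I would argue by contradiction: suppose every occurrence of $a_p^n$ in $\pi(x)$ is followed by $a_p$. Extending one symbol at a time, each such occurrence then sits at the start of an infinite run of $a_p$'s, so $\pi(x)$ is eventually equal to $a_p^\infty$ to the right. Since $\pi(x) \neq a_p^\infty$, there is a largest index $k$ with $\pi(x)_k \neq a_p$; write $c = \pi(x)_k$, so that $\pi(x)_i = a_p$ for all $i > k$. The word $c\,a_p^n$ occurs in $\pi(x)$ at position $k$, so by recurrence it occurs at some position $k' \neq k$. Then $\pi(x)_{k'} = c \neq a_p$ forces $k' < k$, but the occurrence of $a_p^n$ at position $k'+1$ again extends to an infinite run of $a_p$'s to the right, so $\pi(x)_i = a_p$ for all $i \geq k'+1$; in particular $\pi(x)_k = a_p$, contradicting the choice of $k$. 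Hence some occurrence of $a_p^n$ in $\pi(x)$ is followed by a non‑$a_p$ symbol, and $a_p^n$ is right‑special. Left‑specialness follows by the mirror‑image argument: assuming every occurrence of $a_p^n$ is preceded by $a_p$ forces $\pi(x)$ to be eventually $a_p^\infty$ to the left, and applying recurrence to the word $a_p^n c'$, where $c'$ is the symbol immediately to the right of the last $a_p$ of that infinite left tail, yields the analogous contradiction.

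The one genuinely delicate point --- and the step I expect to be the main obstacle --- is "every occurrence of $a_p^n$ is followed by $a_p$ $\implies$ contradiction with recurrence". The subtlety is that recurrence only forbids a word from occurring exactly once, while $a_p^n$ itself may legitimately occur infinitely often (indeed it does, inside the eventually‑constant tail); one must pass to the longer word $c\,a_p^n$ and check that it can occur only at the single index $k$. Everything else --- the easy $a_p$‑extension, the reduction to occurrences inside $\pi(x)$, and the preservation of recurrence under $\pi$ --- is routine given the block‑map data for $\pi$ recorded just before the statement.
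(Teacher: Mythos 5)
Your proof is correct and follows essentially the same route as the paper's: both assume for contradiction that $a_p^n$ can only be followed by $a_p$, deduce that the (recurrent, transitive) point is eventually $a_p^\infty$ to the right with a last non-$a_p$ symbol $c$, and apply recurrence to the word $c\,a_p^n$ to reach a contradiction. The only cosmetic difference is how the contradiction is extracted --- the paper locates a fresh occurrence of $a_p^n$ followed by a non-$a_p$ symbol, while you propagate the run of $a_p$'s forward from the second occurrence to contradict $\pi(x)_k = c$.
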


\begin{proof}
Since $a_p^\infty \in \pi(X)$, $a_p^n$ can be followed by $a_p$. Assume for a contradiction that $a_p^n$ is not right-special, i.e., it can only be followed by $a_p$. Since $\pi(X) \neq \{a_p^{\infty}\}$, there must exist $m \in \z$ so that \[\sigma^m(x) = \cdots y_{-3}y_{-2}y_{-1}. a_p^\infty,\] where $y_{-1} \neq a_p$.  Since $x$ is recurrent, $y_{-k}\cdots y_{-k+n} = y_{-1}a_p^n$ for some $k \geq n+2$.  Then $-k + n \leq -2$, so there is some smallest $q \geq 1$ such that $y_{-k+n+q} \neq a_p$.  Then $y_{-k+q} \cdots y_{-k+n+q-1} = a_p^n$ and $y_{-k+n+q} \neq a_p$, implying that $a_p^n$ is right-special, a contradiction. Our original assumption was then false, i.e., $a_p^n$ is right-special; a symmetric argument shows that $a_p^n$ is left-special.
\end{proof}

To establish that (1) and (2) from Theorem~\ref{maintheoremintro} hold, first observe that, by Lemma \ref{factorlemma}, it is enough to show only that (1) and (2) hold for $\pi(X)$.  Indeed, if we prove that $c_{\pi(X)}(n) \greatersup (j+1)n$, then there is a strictly increasing sequence $(n_k)$ such that $c_{\pi(X)}(n_k - r) > (j+1)(n_k - r) + k$ for all $k$.  So by Lemma \ref{factorlemma}, \begin{eqnarray*}
	c_X(n_k) & \geq & c_{\pi(X)}(n_k - r ) + in_k \\
	& > & (j+1)(n_k-r ) + k + in_k \\ 
	& = & (j+i + 1)n_k + k - r(j+1),
\end{eqnarray*}
which implies $c_{X}(n) \greatersup (j+i+1)n$.  By a similar argument,  $c_{\pi(X)}(n) \greaterinf jn$ implies $c_{X}(n) \greaterinf (j+i)n$.  

Because of this, to simplify notation we will replace $\pi(X)$ with $X$ and make the following assumptions: \begin{itemize}
	\item {\bf (A1):}  $X \in S(j, 0)$;
	\item {\bf (A2):}  the minimal subsystems of $X$ are the one-point systems $M_1 = \{a_1^\infty\}, \ldots, M_j = \{a_j^\infty\}$; and
	\item {\bf (A3):} for any word in $X$ of the form $a_ps$ or $sa_p$, $s = b$.
\end{itemize}

\subsubsection{Proof that (1) and (2) from Theorem \ref{maintheoremintro} hold when $i=0$} \label{hold}  
Assume $X$ is as above.  Then there is a transitive point $x$ for $X$ that is recurrent, and that therefore cannot be both eventually periodic to the left and eventually periodic to the right. Without loss of generality, assume that $x$ is not eventually periodic to the right. Our proof will involve bounding from below the number of right-special words in $X$ of various lengths (were $x$ eventually periodic to the right, we would instead count left-special words).  The following elementary lemma will be used to verify that $c_X(n) \greatersup (j+1)n$.

\begin{lemma} \label{countinglemma}
For each $n \geq 2$, $c_X(n) \geq c_X(n-1) + \#RS(n-1)$.  Therefore, for $m < n$, $c_X(n) \geq c_X(m) + \sum_{\ell = m}^{n-1} \#RS(\ell)$.
\end{lemma}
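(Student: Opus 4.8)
The plan is the standard prefix-counting argument. Fix $n \ge 2$ and consider the ``forget the last letter'' map $\rho \colon \mathcal L_n(X) \to \mathcal L_{n-1}(X)$ given by $\rho(w_1 w_2 \cdots w_n) = w_1 w_2 \cdots w_{n-1}$. Because $X$ is a shift-invariant closed subset of a bi-infinite full shift, every $u \in \mathcal L_{n-1}(X)$ occurs inside some bi-infinite point of $X$ and hence admits at least one right extension $ua \in \mathcal L_n(X)$; thus $\rho$ is surjective with all fibers nonempty. For $u \in \mathcal L_{n-1}(X)$ the fiber $\rho^{-1}(u)$ is in bijection with $\{a \in \mathcal{A} : ua \in \mathcal L(X)\}$, so $\#\rho^{-1}(u) \ge 1$ always, and $\#\rho^{-1}(u) \ge 2$ precisely when $u$ is right-special.

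Summing the fiber cardinalities over $\mathcal L_{n-1}(X)$ then yields
\[
c_X(n) \;=\; \sum_{u \in \mathcal L_{n-1}(X)} \#\rho^{-1}(u) \;\ge\; \sum_{u \in \mathcal L_{n-1}(X)} 1 \;+\; \sum_{u \in RS(n-1)} 1 \;=\; c_X(n-1) + \#RS(n-1),
\]
which is the first inequality. For the second, apply this bound to $n, n-1, \dots, m+1$ in turn and telescope (equivalently, induct on $n$): $c_X(n) \ge c_X(n-1) + \#RS(n-1) \ge \cdots \ge c_X(m) + \sum_{\ell = m}^{n-1}\#RS(\ell)$.

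I expect no real obstacle here, as the lemma is a soft combinatorial identity. The only point meriting a remark is the surjectivity of $\rho$, i.e.\ that every length-$(n-1)$ word of $X$ has a right extension in $X$; this is immediate from the definition of a subshift, since each element of $\mathcal L(X)$ occurs in a bi-infinite point of $X$.
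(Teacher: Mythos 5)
Your proposal is correct and is essentially the paper's own argument: the paper likewise notes that every word of length $n-1$ extends to at least one word of length $n$ while right-special words extend to at least two, and then iterates. Your phrasing in terms of the fibers of the ``forget the last letter'' map is just a more formal packaging of the same counting.
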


\begin{proof}
For every $n$, each word of length $n-1$ can be extended to at least one word of length $n$, while each right-special word of length $n-1$ can be extended to at least two words of length $n$. This yields $c_X(n) \geq c_X(n-1) + \#RS(n-1)$.
Applying this recursively, we obtain the inequality 
$c_X(n) \geq c_X(m) + \sum_{\ell = m}^{n-1} \#RS(\ell)$ for $m<n$. 
\end{proof}

We begin by considering the set of right-special words provided by Lemma \ref{RSlemma}, which we will call $B$: \[B := \{a_p^n \; : \; 1 \leq p \leq j \mbox{ and } n \in \n\}.\] 
By just considering the elements of $B$, we see that 
$\#RS(n) \geq j$ for all $n$. 

By combining the inequality $\#RS(n) \geq j$ with Lemma \ref{countinglemma}, we can show that the bound $c_X(n) \greatersup (j+1)n$ would imply $c_X(n) \greatersup jn$.  Indeed, note that $c_X(n) \greatersup (j+1)n$
implies that there exists an increasing sequence of integers $(n_k)$ such that $c_X(n_k) \geq (j+1)n_k$. Thus for $n > n_k$, 
\begin{align*}
c_X(n) & \geq c_X(n_k) + \sum_{\ell = n_k}^{n-1} \# RS(n) \\
& \geq (j+1)n_k + j(n-n_k) \\
& \geq j n + n_k, 
\end{align*}
which gives $c_X(n) \greaterinf j n$.

Thus we will be done if we establish the bound $c_X(n) \greatersup (j+1)n$. We divide the proof into cases, beginning with the simplest case. 

\

\textbf{Case (i):} For all sufficiently large $n$, $\#RS(n) \geq j+1$ and there is a strictly increasing sequence $(n_k)$ where $\#RS(n_k) \geq j+2$ for all $k$.

The assumptions imply that, for $n>n_k$, 
\[\sum_{\ell=1}^{n} \# RS(\ell) \geq (j+1)n+k.\] This implies that $c_X(n) \greatersup (j+1)n$,
which completes the proof of Theorem \ref{maintheoremintro} in Case (i). 

We now assume that the hypotheses of Case (i) do not hold. That is, precisely one of the following two conditions holds.

\

{\bf Case (ii):} There is a strictly increasing sequence $(n_k)$ such that $\#RS(n_k) = j$, i.e., $RS(n_k) = B$, for all $k$;

{\bf Case (iii):} for all sufficiently large $n$, $\#RS(n) = j+1$.

\begin{lemma} \label{structurelemma}
In Case (ii) or (iii), for each $k$, after re-indexing the minimal subsystems $M_1, \ldots, M_j$ if necessary, there exist words $w_{12}^{(k)}$, $w_{23}^{(k)}$, \ldots, $w_{j1}^{(k)}$, each of which begins and ends with the symbol $b$, such that the transitive point $x \in X$ has one of the following forms:
\[x = \cdots \; a_1^{\geq n_k} \; w_{12}^{(k)} \; a_2^{\geq n_k} \; w_{23}^{(k)} \; \cdots \; a_j^{\geq n_k} \; w_{j1}^{(k)} \; a_1^{\geq n_k} \; w_{12}^{(k)} \; \cdots \ {\rm or}\] 
\[x =  a_1^{\infty} \; w_{12}^{(k)} \; a_2^{\geq n_k} \; w_{23}^{(k)} \; \cdots \; a_j^{\geq n_k} \; w_{j1}^{(k)} \; a_1^{\geq n_k} \; w_{12}^{(k)} \; \cdots,\] 
where each $a_p^{\geq n_k}$ represents a word of the form $a_p^n$ for some $n \geq n_k$.

\end{lemma}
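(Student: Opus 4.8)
The plan is to analyze the structure of the transitive point $x$ under the hypothesis that $\#RS(n)$ stays close to the trivial lower bound $j$ coming from the family $B = \{a_p^n\}$. The key observation is that if a word $w$ is right-special but is not a power $a_p^n$, then $w$ contributes a genuinely ``new'' right-special word beyond $B$, and we cannot afford too many of these simultaneously. In Case (ii), for the specific lengths $n_k$ we have $RS(n_k) = B$ exactly, meaning the only right-special words of length $n_k$ are the powers $a_p^{n_k}$; in Case (iii), for all large $n$ there is exactly one right-special word of length $n$ outside $B$. So first I would fix $k$ (hence a length $n_k$ with $RS(n_k)$ minimal, resp.\ nearly minimal) and ask: what does a word of $x$ that is \emph{not} eventually a single power $a_p^\infty$ have to look like at scale $n_k$?

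The main step is to show that between two consecutive ``long runs'' $a_p^{\geq n_k}$ and $a_q^{\geq n_k}$ appearing in $x$, the connecting word (the maximal subword containing no run of length $\geq n_k$ of any single $a_p$) is forced to be essentially unique given its endpoints, and that the runs must cycle through the symbols $a_1, \dots, a_j$ in a fixed cyclic order. For the cyclic-order claim: since $x$ is transitive and recurrent and all $M_p = \{a_p^\infty\}$ are limit points of the orbit, each symbol $a_p$ must appear in arbitrarily long runs in $x$ (this is where recurrence of $x$ and the fact that $a_p^\infty \in X = \overline{\mathcal O(x)}$ get used, much as in Lemma~\ref{RSlemma}). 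Now consider the left-extensions of the words $a_p^{n_k}$: by Lemma~\ref{RSlemma} each $a_p^{n_k}$ is also left-special, but the constraint on $\#RS(n_k)$ (applied also to left-special words, or by a counting argument using Lemma~\ref{countinglemma} in the reversed subshift) severely limits how the various long runs can be preceded and followed. I would argue that each long run $a_p^{\geq n_k}$ is followed by a run $a_{q(p)}^{\geq n_k}$ for a well-defined $q(p)$ depending only on $p$ and $k$, and that $p \mapsto q(p)$ is a bijection on $\{1,\dots,j\}$ — a bijection with no proper invariant subset (else $x$ would not be transitive), hence a single $j$-cycle. After re-indexing so that $q(p) = p+1 \pmod j$, the word $w_{p,p+1}^{(k)}$ is the (necessarily unique, by the $\#RS$ bound applied to all lengths up to the gap size) connecting word, and by assumption (A3) any occurrence of $a_p s$ or $s a_p$ forces $s = b$, so each $w_{p,p+1}^{(k)}$ begins and ends with $b$. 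The two displayed forms for $x$ correspond to whether $x$ has a one-sided tail that is eventually $a_1^\infty$ (the non-recurrent-looking end, which is allowed on the left since we only assumed $x$ is not eventually periodic \emph{to the right}) or whether both tails of $x$ contain infinitely many long runs.

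The hard part will be pinning down the \emph{uniqueness} of the connecting words $w_{p,p+1}^{(k)}$ and the \emph{well-definedness} of the successor map $q(p)$, rather than merely bounding how often things can vary. The mechanism should be: if, say, some long run $a_p^{\geq n_k}$ were sometimes followed by a run of $a_q$ and sometimes by a run of $a_{q'}$, or if two distinct connecting words both appeared between $a_p$-runs and $a_{p+1}$-runs, one can locate right-special words of length $n_k$ (or of intermediate lengths between $n_k$ and the next long run) lying outside $B$ — and getting more than one such, or more than allowed by Case (iii), contradicts the standing assumption. Making this precise requires carefully choosing at which position to read off a right-special word: one wants the longest common suffix of two competing finite words that branches, and to check its length is controlled by $n_k$. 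I expect this suffix-tree-style bookkeeping, together with correctly handling the boundary between ``inside a long run'' and ``inside a connecting block,'' to be the delicate technical core; the cyclic structure and the $b$-delimiting are then comparatively quick consequences via transitivity and (A3).
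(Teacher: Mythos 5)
Your core mechanism for Case (ii) is essentially the paper's: starting from $a_p^{n_k}b$, every window of length $n_k$ that is not in $RS(n_k)=B$ (hence every window containing the $b$) forces its successor symbol uniquely, so the word following $a_p^{n_k}$ is forced until the window becomes some $a_r^{n_k}$; this gives the unique transition word $w(p)$ and successor $f(p)=r$ simultaneously, and then surjectivity of $f$ (from recurrence, since $ba_p^{n_k}$ occurs) plus transitivity forces a single $j$-cycle. Two remarks on your framing, though. First, your parenthetical about locating right-special words ``of intermediate lengths between $n_k$ and the next long run'' is not usable: in Case (ii) the hypothesis constrains $\#RS$ only at the lengths $n_k$, so a non-constant right-special word of intermediate length is no contradiction; the branching word must be read off at length exactly $n_k$. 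Second, your detour through left-special words and ``the constraint applied to the reversed subshift'' is both unnecessary and unjustified — the case hypotheses bound only $\#RS$, and nothing in them controls left-special words. You also omit why the forcing terminates (the paper uses that $\omega(x)$ must contain some minimal subsystem $\{a_r^\infty\}$, so $a_r^{n_k}$ eventually appears in the forced tail), a small but necessary point.

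The genuine gap is Case (iii). There, one non-constant right-special word of \emph{every} length is permitted, so ``locating a right-special word outside $B$'' is not a contradiction, and your proposal offers nothing beyond the phrase ``more than allowed by Case (iii).'' Moreover, the sequence $(n_k)$ is not even given by the case hypothesis and must be constructed. The paper's resolution: choose $(n_k)$ so that $ba_1^{n_k}b$ occurs in $x$; then $ba_1^{n_k}$ is right-special of length $n_k+1$, so $RS(n_k+1)=\{ba_1^{n_k},a_1^{n_k+1},\ldots,a_j^{n_k+1}\}$ exactly, every other word of length $n_k+1$ is non-right-special and forces its successor, and the one extra right-special word $ba_1^{n_k}$ can only be followed by $a_1$ or $b$, so it does not disrupt the forcing of the transition word. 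Without identifying the extra right-special word and checking its continuations, the uniqueness of $w(p)$ and well-definedness of $f$ do not follow in Case (iii).
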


\begin{proof}
Recall that $x$ is a transitive point that is not eventually periodic to the right. For all $p \in \{1,\ldots ,j\}$ and all $n \geq 1$, $a_p^n$ occurs in $x$. Since $x$ is not eventually periodic to the right, $a_p^nb$ must occur as well. Since $x$ is recurrent,
$a_p^nb$ occurs infinitely many times in $x$. It follows that 
$ba_p^n$ also occurs infinitely many times in $x$. 

Assume the hypothesis of Case (ii), and fix $k \geq 1$ and $p \in \{1,\ldots ,j\}$. 
Define $w_1 = b$.  Since we are in Case (ii), $a_p^{n_k-1}w_1 \not \in RS(n_k)$, so there is only one symbol, call it $w_2$, that can appear after $a_p^{n_k-1}w_1$.  Similarly there is only one symbol that can appear after $a_p^{n_k-2}w_1w_2$.  Continuing in this way, each successive symbol $w_i$ is forced (at least) until some $w_i \cdots w_{i + n_k-1} \in RS(n_k)$, i.e., $w_i \cdots w_{i + n_k-1} = a_r^{n_k}$ for some $r$.  There must exist some smallest $i$ for which this is true, since the omega-limit set $\omega(x)$ must contain one of the minimal subsystems $M_1, \ldots, M_j$. Set $w(p) = w_1w_2\cdots w_{i-1}$. Note that $w(p)$ is the only word that begins with $b$ that can follow $a_p^{n_k}$ in $x$, and that $a_r^{n_k}$ is the only word that can follow $w(p)$. Set $f(p) = r$.  

Since $1 \leq p \leq j$ was arbitrary, we obtain a function $f:\{1,\ldots ,j\} \to \{1,\ldots ,j\}$. Since $b a_p^{n_k}$ appears in $x$ for each $p$, 
each $p \in \{1,\ldots ,j\}$ is equal to $f(q)$ for some 
$q \in \{1,\ldots ,j\}$, i.e. $f$ is a bijection and thereby a composition of cyclic permutations. If $f$ were the composition of two or more cyclic permutations, then $x$ would not contain all $a_p^{n_k}$ for $1 \leq p \leq j$, a contradiction to transitivity of $x$. Therefore, $f$ must cyclically permute the elements of $\{1,\ldots ,j\}$. Re-indexing if necessary, we may assume that $f(p)=p+1$ for $p<j$ and $f(j)=1$. For $p \in \{1,\ldots,j\}$, set $w_{pf(p)}^{(k)}=w(p)$. After re-indexing again in the case where $x$ is eventually left-asymptotic, it follows that $x$ has one of the two prescribed forms.  

Now assume the hypothesis of Case (iii). By the assumptions on $x$, there must exist a strictly increasing sequence $(n_k)$ such that $ba_1^{n_k}b$ occurs in $x$ for all $k$. Because $ba_1^{n_k+1}$ also occurs in $x$, we know that $ba_1^{n_k}$ is right-special for every $k$.  By deleting the first few terms of the sequence $(n_k)$ if necessary, this together with the assumption of Case (3) implies that \[RS(n_k+1) = \{ba_1^{n_k}, a_1^{n_k+1}, \ldots, a_j^{n_k+1}\}\] for every $k$.  The word $a_1^{n_k}b \not \in RS(n_k+1)$, so, as in Case (ii), the word  $a_1^{n_k}b$ forces a transition word $w(1)$ (whose first symbol is $b$).  But note that the only symbols that can follow $ba_1^{n_k}$ are also $a_1$ or $b$, which similarly implies that the word $ba_1^{n_k}b$ forces the same transition word $w(1)$. The rest of the argument from Case (ii) carries through with $n_k$ replaced by $n_k+1$. We obtain the same possible forms of $x$ as in Case (ii). 
\end{proof}

We proceed assuming the conclusion of Lemma \ref{structurelemma}.
For each $p \in \{1, \ldots, j\}$ and $k \in \n$, define \[\ell_{k, p} := \min\{\ell \geq n_k \; : \; ba_p^\ell b \in \mathcal L(X)\},\] and set $\ell_{0, p} := 0$.  Consider the following set of words: \[E(k) := \{a_1^{\ell_{k, 1}}w_{12}^{(k)}a_2^{\ell_{k, 2}},\ldots, a_j^{\ell_{k, j}}w_{j1}^{(k)}a_1^{\ell_{k, 1}}\}.\]  It follows from Lemma \ref{structurelemma} that, if $u \in E(k)$, then any suffix of $u$ is right-special.  Some of these suffixes, namely, the {\em constant} suffixes $a_2^{\ell_{k, 2}}, \ldots, a_1^{\ell_{k, 1}}$, are in $B$.  But many of these suffixes are not constant and therefore not in $B$.

For example, let $E_1(k)$ denote the set of suffixes of $a_1^{\ell_{k, 1}}w_{12}^{(k)}a_2^{\ell_{k, 2}}$ that are longer than $\ell_{k, 2}$ but no longer than $\ell_{k, 2} + \ell_{k, 1}$.  Then each $u \in E_1(k)$ is right-special and not in $B$.  

In general, for $1 \leq p \leq j$, let $E_p(k)$ denote the set of suffixes of $a_p^{\ell_{k, p}}w_{pq}^{(k)}a_q^{\ell_{k, q}}$ that are longer than $\ell_{k, q}$ but no longer than $\ell_{k,q} + \ell_{k, p}$, where $q = p+1$ if $2 \leq p < j$ and $q = 1$ if $p = j$.  Note that $\#E_p(k) = \ell_{k, p}$, and therefore \begin{equation} \label{Epkequation} \sum_{p = 1}^j \#E_p(k) = \ell_{k, 1} + \cdots + \ell_{k, j}.\end{equation}  Also, each $E_p(k) \cap B = \varnothing$, and, for $r \neq p$, $E_p(k) \cap E_r(k) = \varnothing$.

\begin{proposition} \label{countingprop}
For each $k \in \n$, \[c_X(\ell_{k, 1} + \cdots + \ell_{k, j}) \geq (j+1)(\ell_{k, 1} + \cdots + \ell_{k, j}) + (\ell_{k-1, 1} + \cdots + \ell_{k-1, j}).\]
\end{proposition}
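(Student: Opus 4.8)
The plan is to use the counting machinery from Lemma~\ref{countinglemma} together with the two families of right-special words identified above. Let me write $N_k := \ell_{k,1} + \cdots + \ell_{k,j}$ for brevity, and recall we want $c_X(N_k) \geq (j+1)N_k + N_{k-1}$.

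\medskip

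\noindent\textbf{Proof proposal.}

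First I would establish a lower bound of the form $\#RS(\ell) \geq j + (\text{extra})$ valid for appropriate ranges of $\ell$, where the extra contributions come from the sets $E_p(k)$. The key structural facts assembled above are: (a) $B \subseteq RS(\ell)$ for every $\ell$, contributing $j$ right-special words of each length; (b) each $E_p(k)$ consists of $\ell_{k,p}$ right-special words, none in $B$, of lengths ranging over $\{\ell_{k,q}+1, \ldots, \ell_{k,q}+\ell_{k,p}\}$ (with $q$ the successor of $p$ cyclically); and (c) the $E_p(k)$ are pairwise disjoint and disjoint from $B$. The point is that the lengths occurring in $\bigcup_p E_p(k)$ are spread across an interval, and I need to track how many of these sets "cover" each length $\ell$. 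Since $E_p(k)$ covers the length interval $(\ell_{k,q}, \ell_{k,q}+\ell_{k,p}]$ and these intervals are arranged around the cycle, a careful accounting shows that summing $\#RS(\ell)$ over $\ell$ from (roughly) $1$ up to $N_k$ picks up the full contribution $\sum_p \#E_p(k) = N_k$ from equation~(\ref{Epkequation}), in addition to the baseline $j \cdot N_k$ from $B$. The natural approach is to apply Lemma~\ref{countinglemma} with $m$ small (say $m = 1$, using $c_X(1) \geq 1$) and $n = N_k$, obtaining
\[
c_X(N_k) \;\geq\; c_X(1) + \sum_{\ell=1}^{N_k - 1} \#RS(\ell) \;\geq\; 1 + \sum_{\ell=1}^{N_k-1}\Big( j + \#\{p : \ell \in \text{range of } E_p(k)\}\Big).
\]
Bounding $\sum_{\ell} \#\{p : \ell \in \text{range}\}$ below by $\sum_p \#E_p(k) - (\text{boundary loss}) = N_k - O(1)$ gives the main term $(j+1)N_k$; the only delicate point is making sure the ranges of the $E_p(k)$ actually all lie below $N_k$, which holds because each such length is at most $\ell_{k,q} + \ell_{k,p} \leq N_k$.

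\medskip

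To get the additional $+N_{k-1}$ term, I would feed in the analogous sets $E_p(k-1)$ at the previous scale. Since $\ell_{k-1,p} \leq \ell_{k,p}$ (as $n_{k-1} < n_k$ and the $\ell$'s are nondecreasing in $k$ by definition), the sets $E_p(k-1)$ live at lengths $\leq N_{k-1} \leq N_k$, and crucially the words in $E_p(k-1)$ that are \emph{not already counted} in the $E_p(k)$ tally or in $B$ provide a further $N_{k-1}$ worth of right-special words — OR, more cleanly, I would instead run the Lemma~\ref{countinglemma} telescoping in two stages: from $1$ to $N_{k-1}$ using the $E_p(k-1)$ data to get $c_X(N_{k-1}) \geq (j+1)N_{k-1}$, and then from $N_{k-1}$ to $N_k$ using $\#RS(\ell) \geq j+1$ on that whole range (which follows from Case~(ii)/(iii) hypotheses combined with the $E_p(k)$ words being present for all lengths in the relevant window), yielding
\[
c_X(N_k) \;\geq\; c_X(N_{k-1}) + \sum_{\ell = N_{k-1}}^{N_k - 1}\#RS(\ell) \;\geq\; (j+1)N_{k-1} + (j+1)(N_k - N_{k-1}) + N_{k-1},
\]
where the final extra $+N_{k-1}$ comes from the $E_p(k)$ suffixes whose lengths fall in $(N_{k-1}, N_k]$ — one needs that $\sum_p \#(E_p(k) \cap (\text{lengths} > N_{k-1})) \geq N_{k-1}$. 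Either bookkeeping route works; I would pick whichever makes the interval arithmetic cleanest.

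\medskip

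\noindent\textbf{Main obstacle.} The genuine difficulty is purely combinatorial bookkeeping: verifying that the union of length-intervals covered by the $E_p(k)$ (around the cyclic structure $1 \to 2 \to \cdots \to j \to 1$) really does contribute the full $\sum_p \ell_{k,p} = N_k$ to $\sum_\ell \#RS(\ell)$ without double-counting and without losing more than $O(1)$ at interval boundaries, and simultaneously that the previous-scale data contributes the clean $+N_{k-1}$. One must be careful that $E_p(k) \cap E_r(k) = \varnothing$ as \emph{sets of words} (already noted above) translates correctly into the length-multiplicity count, and that no word of $E_p(k)$ coincides with a word of $E_r(k-1)$ in a way that would spoil the additivity; here the marker symbol $b$ bracketing the blocks $a_p^{\ell}$ (Assumption (A3)) and the fact that a right-special suffix determines where it sits relative to the nearest $b$ should rule out collisions. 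Once the interval arithmetic is set up correctly, the inequality falls out of Lemma~\ref{countinglemma} with no further ideas needed.
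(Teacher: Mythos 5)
Your proposal follows essentially the same route as the paper: the paper proves the bound by induction on $k$, where the inductive step applies Lemma~\ref{countinglemma} over the window $[L_{k-1}, L_k-1]$ (with $L_k = \ell_{k,1}+\cdots+\ell_{k,j}$) and counts $j(L_k-L_{k-1})$ right-special words from $B$ plus all $L_k$ words of $\bigcup_p E_p(k)$ in aggregate via Equation~(\ref{Epkequation}) — this is exactly your second bookkeeping route, unrolled as an induction whose hypothesis $c_X(L_{k-1}) \geq (j+1)L_{k-1} + L_{k-2}$ supplies the first-stage bound. One small caution: the paper never asserts the pointwise bound $\#RS(\ell) \geq j+1$ across the whole window (which can fail, since the length-intervals $(\ell_{k,q}, \ell_{k,q}+\ell_{k,p}]$ covered by the $E_p(k)$ need not cover all of $[N_{k-1}, N_k-1]$); only the aggregate count $\sum_p \#E_p(k) = N_k$ is used, which is the accounting you should keep.
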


\begin{proof}
To simplify notation, let $L_k = \ell_{k, 1} + \cdots + \ell_{k, j}$.  We proceed by induction, where, since $L_0 = 0$, the base case is the assertion that $c_X(L_1) \geq (j+1)L_1$.  This assertion follows from Lemma \ref{countinglemma} together with the following observations:\begin{itemize}
\item for $1 \leq \ell \leq L_1 - 1$, there are $j$ right-special words of length $\ell$ in $B$;
\item there are $L_1$ (distinct) right-special words in $\bigcup_{p = 1}^j E_p(1)$, none of which are in $B$.
\end{itemize}

Now assume $c_X(L_{k-1}) \geq (j+1)L_{k-1} + L_{k-2}$, and observe that, by Lemma \ref{countinglemma} together with Equation (\ref{Epkequation}), \begin{eqnarray*}
c_X(L_k) & \geq & c_X(L_{k-1}) + \sum_{\ell = L_{k-1}}^{L_k - 1} \#RS(\ell) \\ & \geq & \left[ (j+1)L_{k-1} + L_{k-2} \right] + \left[ j(L_k - L_{k-1}) + L_k \right] \\ & \geq & (j+1)L_k + L_{k-1}.
\end{eqnarray*}

\end{proof}

Since $(\ell_{k-1, 1} + \cdots + \ell_{k-1, j}) \rightarrow \infty$ as $k \rightarrow \infty$, it follows from Proposition \ref{countingprop} that $c_X(n) \greatersup (j+1)n$, which by our earlier discussion completes the proof of Theorem~\ref{maintheoremintro} in Cases (ii) and (iii).

\subsubsection{Sharpness of Theorem \ref{maintheoremintro} when $i = 0$} \label{sharpnessof1}  
Here we define a family of examples that demonstrates the sharpness of the bounds in Theorem \ref{maintheoremintro}.  We will consider the $i=0$ cases first, and then show how to modify the argument for $i > 0$.  Let $g: \n \rightarrow \n$ be a given nondecreasing unbounded function.

Define a sequence $\omega = \omega_1\omega_2\omega_3\cdots$ via the rule that $\omega_{2^mk + m} = m$ for $m \geq 1$ and $k \geq 0$, so that $\omega = 1213121412131215\cdots$.  Then, define a doubly-infinite sequence \[x = j^\infty . \, (1^{n_1^1}2^{n_1^2} \cdots j^{n_1^j})(1^{n_2^1}2^{n_2^2} \cdots  j^{n_2^j})(1^{n_1^1}2^{n_1^2} \cdots j^{n_1^j})(1^{n_3^1}2^{n_3^2} \cdots j^{n_3^j}) \cdots,\] for a doubly-indexed sequence $n_k^p$ satisfying \[n_1^1 << n_1^2 << \cdots << n_1^j << n_2^1 << n_2^2 \cdots << n_2^j << \cdots .\]
(The pattern of $n_k^p$ within $x$ is as follows: the superscript $p$ is always the same as the letter being repeated, and the subscript $k$ comes from 
$\omega$, in that it is $\omega_1 = 1$ for the first $j$ exponents, then $\omega_2 = 2$ for the next $j$, then $\omega_3 = 1$ for the next $j$, and so on.) Note that while $\omega$ is fixed, different sequences $(n_k^p)$ give rise to different sequences $x$.  In other words, $x$ represents a family of examples parametrized by $(n_k^p)$.

Regardless of choice of $(n_k^p)$, the transitive subshift $X = \overline{\mathcal O(x)}$ has $j$ minimal subsystems:  $M_1 = \{1^\infty\}$,\ldots, $M_j = \{j^\infty\}$.  Also, we can enumerate the right-special words in $X$ as follows.

All words in the previously-defined set $B$ are again right-special, and any right-special word not in $B$ must end with a word from the set \[ C = \{12^{n_k^2}, 23^{n_k^3}, \ldots, j1^{n_k^1} \; : \; k \in \n\}.\]  For $1 \leq p \leq j - 1$ and for any given $k$, the word \[p^{n_k^p}(p+1)^{n_k^{p+1}}\] is {\em maximally right-special} in the sense that: \begin{itemize}
	\item any suffix of $p^{n_k^p}(p+1)^{n_k^{p+1}}$ is right-special; and
	\item for any symbol $s$, the word $sp^{n_k^p}(p+1)^{n_k^{p+1}}$ fails to be right-special.
\end{itemize}  

Let $D = D(k)$ denote the set of words that are not in $B$ and are suffixes of one of these.  Each $n$ in an interval of the form $\ds \left(n_k^{p+1}, n_k^{p+1} + n_k^p \right]$ corresponds to the length of a word in $D$.  Moreover, we can ensure that these intervals are disjoint by requiring that 
\[n_k^3 > n_k^2 + n_k^1\; , \;\; n_k^4 > n_k^3 + n_k^2\; , \;\ldots \; , \;\; n_k^j > n_k^{j-1}+n_k^{j-2}. \]

Now consider right-special words ending in $j1^{n_k^1}$.  In $\omega$, the left-most occurrence of $k$ is $\omega_{2^k - 1}$; any occurrence of a symbol $m \geq k$ in $\omega$ is directly preceded by the word $\omega_1 \cdots \omega_{2^{k-1} - 1}$; and any occurrence of the word $\omega_1 \cdots \omega_{2^{k-1}-1}$ is directly followed by a symbol $m \geq k$.  Moreover, any occurrence of the word $k\omega_1 \cdots \omega_{2^{k-1} - 1}$ must be followed by a symbol $m > k$, and any occurrence of the word $m \omega_1 \cdots \omega_{2^{k-1} - 1}$ for $m > k$ must be followed by $k$.

It follows from these observations about $\omega$ that, in $x$, there is a unique word $u = u(k)$ of length \[L = L(k) = \sum_{i = 1}^j \left( n_{\omega_1}^i + n_{\omega_2}^i + \cdots + n_{\omega_{2^{k-1} - 1}}^i \right),\] namely, \[u := x_0 \cdots x_{L-1} = \underbrace{(1^{n_1^1} \cdots j^{n_1^j})(1^{n_2^1}\cdots j^{n_2^j}) \cdots (1^{n_1^1}\cdots j^{n_1^j})}_{2^k - 1 \mbox{ \tiny parenthetical blocks}} \] that directly precedes any occurrence of $1^{n_k^1}$.  It follows that $j^{n_k^j}u1^{n_k^1}$ is maximally right-special.  

Let $ F =  F(k)$ denote the set of words that are not in $B$ and are suffixes of $j^{n_k^j}u1^{n_k^1}$, and note that $ F \cap D = \varnothing$.  Then, for each $n \in (n_k^1, n_k^1 + n_k^j + L(k)]$, there is exactly one word of length $n$ in $F$, and that word is right-special.  

We have established the following.

\begin{proposition}\label{rtspec7}
The set of right-special words in $X$ is \[\bigcup_{n = 1}^\infty RS(n) = B \cup \bigcup_{k = 1}^\infty (  D(k) \cup   F(k)).\]  Moreover, \[\#RS(n) \leq \begin{cases} j & \mbox{ if $n_k^j + n_k^1 + L(k) < n \leq n_{k+1}^1$ for some $k$;} \\ j + 2 & \mbox{ if $n_k^{p} < n \leq n_k^{p} + n_k^{p-1}$ for some $k$ and $2 \leq p \leq j $;} \\  j+1&  \mbox{ otherwise. }\end{cases} \]
\end{proposition}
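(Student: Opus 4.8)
The plan is to read off both assertions from the structural description of the right‑special words developed just above the statement. For the set identity, one inclusion is essentially free and the other uses only the already‑recorded fact that every right‑special word not in $B$ terminates with a word of $C$, plus a short remark about left‑extensions; the bound on $\#RS(n)$ is then a matter of counting, for each $n$, how many words of length $n$ lie in each of the three families $B$, $\bigcup_k D(k)$, and $\bigcup_k F(k)$.

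In detail, for $B \cup \bigcup_k (D(k)\cup F(k)) \subseteq \bigcup_n RS(n)$ we use that each element of $B$ is right-special (as already noted) and that each word of $D(k)$ or $F(k)$ is by construction a suffix of one of the maximally right‑special words $p^{n_k^p}(p+1)^{n_k^{p+1}}$ ($1\le p\le j-1$) or $j^{n_k^j}u(k)1^{n_k^1}$, hence right‑special. For the reverse inclusion, let $w$ be right‑special; if $w$ is a power of a single symbol then $w\in B$, so assume otherwise, and invoke the recorded fact that $w$ must end with an element of $C$. If $w$ ends with $p(p+1)^{n_k^{p+1}}$ for some $1\le p\le j-1$, then the terminal segment of $w$ is a suffix of the maximal word $v:=p^{n_k^p}(p+1)^{n_k^{p+1}}$; since prepending any symbol to $v$ produces a word that is not right‑special, and since any word obtained by left‑extending a non‑right‑special word is itself non‑right‑special (every occurrence of the longer word is an occurrence of the shorter, which has a unique one‑letter right‑extension), $w$ cannot properly extend $v$ to the left, so $w$ is a suffix of $v$; being non‑constant, $w\in D(k)$. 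If instead $w$ ends with $j1^{n_k^1}$, the identical argument with $v=j^{n_k^j}u(k)1^{n_k^1}$ — using the uniqueness of the length‑$L(k)$ word preceding every occurrence of $1^{n_k^1}$ — gives $w\in F(k)$. This establishes the set identity.

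For the bound, fix $n$. By the identity, $RS(n)$ is the disjoint union of $\{p^n:1\le p\le j\}$ (contributing exactly $j$), the words of length $n$ in $\bigcup_k D(k)$, and the words of length $n$ in $\bigcup_k F(k)$; disjointness uses that $D$‑ and $F$‑words are non‑constant, that $D(k)\cap F(k)=\varnothing$, and that the relevant length ranges are separated across $k$. A length‑$n$ word of $D(k)$ is a suffix of some $p^{n_k^p}(p+1)^{n_k^{p+1}}$ with $n_k^{p+1}<n\le n_k^{p+1}+n_k^p$, so there is at most one per admissible pair $(k,p)$; the separation conditions $n_k^{p+1}>n_k^p+n_k^{p-1}$ together with the rapid growth of $(n_k^p)$ make all of these intervals pairwise disjoint, so at most one $D$‑word of length $n$ occurs, and exactly one precisely when $n\in(n_k^p,n_k^p+n_k^{p-1}]$ for some $k$ and some $2\le p\le j$ (re‑indexing the superscript). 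Likewise at most one $F$‑word of length $n$ occurs, and exactly one precisely when $n\in(n_k^1,n_k^1+n_k^j+L(k)]$ for some $k$. Adding the three contributions and recording which of the two length ranges $n$ falls into yields the values $j$, $j+1$, or $j+2$ as stated.

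The main obstacle is this final bookkeeping step: one must verify that the two families of length intervals — those attached to the $D$‑words and those attached to the $F$‑words — are positioned so as to produce exactly the three displayed regimes, in particular that no $D$‑word of length $n$ survives once $n>n_k^j+n_k^1+L(k)$ and before $n$ reaches $n_{k+1}^1$. Making this precise requires choosing the parameters $n_k^p$ with enough room (both the within‑block inequalities and a sufficiently strong jump from one superscript‑block to the next), guided by the explicit combinatorics of the ruler‑type sequence $\omega$ analyzed above; once the description of the right‑special words is in hand, everything else is routine.
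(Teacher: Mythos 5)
Your overall route is the same as the paper's: Proposition~\ref{rtspec7} is stated there as a summary of the immediately preceding discussion, and your two steps (the set identity via the maximally right-special words, then counting by length) reconstruct exactly that discussion. The set-identity argument is fine, and so is the key identity underlying your count, namely that $\#RS(n)$ equals $j$ plus the indicator that $n$ lies in some $D$-interval plus the indicator that $n$ lies in some $F$-interval.

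The gap is in the step you yourself flag as ``the main obstacle'' and then defer: you claim the parameters $n_k^p$ can be chosen so that no $D$-word has length in $(n_k^j + n_k^1 + L(k),\, n_{k+1}^1]$. For $j \geq 3$ this cannot be arranged by any admissible choice. The longest $D$-interval attached to index $k$ is $(n_k^j,\, n_k^j + n_k^{j-1}]$ (suffixes of $(j-1)^{n_k^{j-1}} j^{n_k^j}$), while the $F$-interval ends at $n_k^1 + L(k) + n_k^j$; since $L(k)$ is a sum of terms $n_m^i$ with $m < k$ and the construction forces $n_k^1 \ll n_k^{j-1}$, one always has $n_k^{j-1} > n_k^1 + L(k)$, so the $D$-interval overhangs the $F$-interval. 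Concretely, for every $n \in (n_k^j + n_k^1 + L(k),\, n_k^j + n_k^{j-1}]$ the word $(j-1)^{\,n - n_k^j} j^{n_k^j}$ is right-special (it can be followed by $1$ or by $j$), is non-constant, and ends in $j$, hence is neither in $B$ nor an $F$-word; so $\#RS(n) \geq j+1$ there, contradicting the first displayed case. To be fair, this reflects an imprecision in the proposition as printed --- the paper's own discussion never addresses the overhang either --- and the intended reading must be that the second case takes precedence, or equivalently that the first regime begins at $n_k^j + n_k^{j-1}$. The correction is harmless downstream (in the sharpness estimate for $c_X(n) \greaterinf jn$ one replaces $c_X(n_k^j+n_k^1+L(k))$ by $c_X(n_k^j + n_k^{j-1}) \leq (j+2)(n_k^j+n_k^{j-1})$ and strengthens the growth requirement on $(n_k^p)$ accordingly), but a proof that tries to verify the first case exactly as stated by tuning parameters, as yours proposes, would fail.
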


Set $n_0^j=0$. 
Since $g$ is nondecreasing and unbounded, we can choose the sequence $n_1^1<n_1^2<\cdots < n_1^j < n_2^1 < \cdots$ to grow fast enough so that for each $k \in \n$, and $p \in \{1,\ldots,j\}$, 
$g(n_k^p)$ is larger than the sum of all $n_{\ell}^q$ smaller than $n_k^p$. More specifically, 
choose $(n_k^p)$ so that for all $k \geq 1$ and $p \in \{1,\ldots ,j\}$
\[g(n_k^{p})  >  \sum_{\ell = 1}^{k-1} \sum_{q = 1}^{j} n_{\ell}^q + \sum_{q =1}^{p-1} n_{k}^q.\] 
Note that the right-hand side above provides an upper bound on the number of $n\in [1,n_k^{p+1})$ with $\#RS(n) = j+2$.
Lemma~\ref{countinglemma} then implies that for $n \in [n_k^p,n_k^{p+1})$,
\[c_X(n) \leq (j+1)n + g(n_k^p)\leq (j+1)n + g(n).\]
Similarly, if $n \in [n_{k-1}^j,n_k^1)$ for some $k \geq 1$, then 
\[c_X(n) \leq (j+1)n + g(n_{k-1}^j)\leq (j+1)n + g(n).\]
We've shown that $c_X(n) \leq (j+1)n + g(n)$ for all $n$. Since $g$ was arbitrary, this shows that the bound $c_X(n) \greatersup (j+1)n$ is sharp.

To see that the bound $c_X(n) \greaterinf jn$ is sharp, we consider the complexity along the subsequence $(n_k^1)$. If we choose $(n_k^p)$ to grow fast enough, then for all $k \geq 1$, 
\[g(n_{k+1}^1) > (j+2)(n_k^j+n_k^1+L(k)).\]
Then Lemma~\ref{countinglemma} and Proposition~\ref{rtspec7} imply that 
\begin{align*}
c_X(n_{k+1}^1) & \leq c_X(n_{k}^j+n_{k}^1+L(k)) + j(n_{k+1}^1-(n_{k}^j+n_{k}^1+L(k))) \\
& \leq (j+2)(n_k^j+n_k^1+L(k)) + j n_{k+1}^1 - j (n_{k}^j+n_{k}^1+L(k)) \\
& \leq g(n_{k+1}^1) + j n_{k+1}^1- j (n_{k}^j+n_{k}^1+L(k)).
\end{align*}
Since $c_X(n) \leq jn + g(n)$ along the sequence $n_k^1$ and $g$ was arbitrary, the bound $c_X(n) \greaterinf jn$ is sharp.

\subsubsection{Sharpness of Theorem \ref{maintheoremintro} when $i>0$}\label{sharpnessi>0} We now wish to show that the sharp examples constructed in Section \ref{sharpnessof1} can be extended to the $i > 0$ case. For this, first consider a system $X \in S(j, 0)$ constructed using the form from Section~\ref{sharpnessof1}, i.e., $X$ is the orbit closure of a sequence of the form
\[
x = j^{\infty}.(1^{n_1^1} 2^{n_1^2} \cdots j^{n_1^j}) (1^{n_2^1} 2^{n_2^2} \cdots j^{n_2^j}) (1^{n_1^1} 2^{n_1^2} \cdots j^{n_1^j}) (1^{n_3^1} 2^{n_3^2} \cdots j^{n_3^j}) \cdots .
\]

Now let $1 \leq i \leq j$, and let $S_{j-i+1}, \ldots, S_j$ be arbitrary Sturmian subshifts with alphabets disjoint from each other and from $\{1, \ldots, j-i\}$.  Our goal is to replace constant strings of symbols from $\{j-i+1, \ldots, j\}$ with sequences chosen from $S_{j-i+1}, \ldots, S_j$ to create $X' \in S(j,i)$ in such a way that the complexity is increased by exactly $in$.  We begin with an elementary observation about minimal subshifts, which applies in particular to $S_{j-i+1}, \ldots, S_j$.

\begin{lemma} \label{sturmian}
Given a minimal subshift $S$ and a word $w \in \mathcal L(S)$, there exist arbitrarily long words $v$ with the property that $wvw \in \mathcal L(S)$.
\end{lemma}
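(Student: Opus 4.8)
The plan is to use minimality directly. Recall that a subshift $S$ is minimal if and only if every point of $S$ is uniformly recurrent, equivalently, every word of $\mathcal{L}(S)$ occurs in every sufficiently long word of $\mathcal{L}(S)$ with bounded gaps. So first I would fix $w \in \mathcal{L}(S)$, pick any point $y \in S$ containing $w$, and say $y_{a} \cdots y_{a+|w|-1} = w$. By minimality (uniform recurrence of $y$), the word $w$ recurs in $y$ infinitely often; in particular, for any target length bound $N$, there is an occurrence of $w$ starting at some coordinate $a' > a + |w| + N$. Then the word $u := y_a \cdots y_{a'+|w|-1}$ lies in $\mathcal{L}(S)$, begins with $w$, ends with $w$, and has length at least $N + 2|w|$, so writing $u = w v w$ would \emph{almost} work --- except that the two marked occurrences of $w$ could overlap a later/earlier copy, so I should be slightly careful: choose the second occurrence to start at $a' \geq a + |w|$ strictly beyond the first, which guarantees $u = w v' $ where $v'$ ends in $w$; then $u$ itself has the form (first $w$)(middle)(last $w$) with a genuine middle word $v$ of length $|u| - 2|w| \geq N$. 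Taking $N \to \infty$ produces arbitrarily long such $v$.

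Concretely, the key steps in order are: (1) recall/invoke that minimal $\Leftrightarrow$ every point uniformly recurrent; (2) fix a point $y \in S$ and a coordinate where $w$ appears; (3) use uniform recurrence to find a second occurrence of $w$ in $y$ starting arbitrarily far to the right of the first; (4) read off the subword $u$ of $y$ spanning from the start of the first occurrence to the end of the second, note $u \in \mathcal{L}(S)$ since it is a subword of a point of $S$; (5) decompose $u = wvw$ and observe $|v|$ can be made arbitrarily large; (6) conclude. Alternatively, one could phrase this entirely in terms of $\mathcal{L}(S)$: by minimality there is $M$ such that every word of $\mathcal{L}(S)$ of length $M$ contains $w$, then take any word $z \in \mathcal{L}(S)$ of length $M + \ell$ for large $\ell$, locate a copy of $w$ in its first $M$ symbols and a copy in its last $M$ symbols, and extract $wvw$ between them.

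I do not expect a serious obstacle here --- this is a standard consequence of minimality and the proof is short. The only point requiring a moment's care is ensuring the two chosen occurrences of $w$ are disjoint (or at least that the left endpoint of the second is $\geq$ the left endpoint of the first plus $|w|$) so that the decomposition $u = wvw$ has a well-defined middle segment $v$ and $|v| = |u| - 2|w|$ genuinely grows; uniform recurrence with gap bound guarantees we can push the second occurrence as far right as we like, so this is automatic. I would write the argument in roughly four or five sentences.
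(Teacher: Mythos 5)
Your proposal is correct and follows essentially the same route as the paper's proof: use minimality to get that $w$ recurs in some (every) point of $S$, pick two occurrences arbitrarily far apart, and read off $wvw$ from the stretch between them. The extra care you take about the two occurrences being disjoint is fine but not something the paper dwells on.
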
 

\begin{proof}
By minimality, every point in $x\in S$ contains $w$ infinitely many times. Therefore, we can find two instances of $w$ in $x$ occurring at indices that are arbitrarily far apart. 
\end{proof}

  To stitch $S_{j-i+1}, \ldots, S_j$ into $X$, we need to impose a further assumption on the sequences $(n_m^i)_{m \in \mathbb{N}}$.  By Lemma \ref{sturmian}, we can recursively define $n_1^1$, $n_1^2, \ldots, n_1^j$, $n_2^1$, $n_2^2, \ldots, n_2^j$, $n_3^1, \ldots$ in such a way that, associated to each $p \in \{j-i+1, \ldots, j\}$ and $k \in \mathbb N$, there is a word $w_k^p \in \mathcal{L}_{n_k^p}(S_p)$, and every such $w_k^p$ is both a prefix and suffix of $w_{k+1}^p$. The proof of sharpness only required rapid growth of the sequence $(n_k^p)$, and Lemma \ref{sturmian} ensures that we may recursively choose $n_k^p$ with arbitrarily rapid growth such that the words $w_k^p$ have the desired conditions. Since each $w_k^j$ is a suffix of $w_{k+1}^j$, the sequence $w_k^j$ has a left-infinite limit (as $k \rightarrow \infty$), which we denote by $w_{\infty}^j$.

Now define
\[
x' = w_{\infty}^j.\big(1^{n_1^1} \cdots (j-i)^{n_1^{j-i}} w_1^{j-i+1} \cdots w_1^j\big) \big(1^{n_2^1} \cdots (j-i)^{n_2^{j-i}} w_2^{j-i+1} \cdots w_2^j\big) \cdots,
\]
the sequence obtained by replacing each $p^{n_k^p}$ in $x$ by $w_k^p$, and replacing $j^{\infty}$ by $w_{\infty}^j$. Then each $S_p$ for $p \in \{j-i+1, \ldots, j\}$ is an infinite minimal subsystem of $X' := \overline{\mathcal{O}(x')}$, and each $\{p^{\infty}\}$ for $p \in \{1, \ldots, j-i\}$ is a finite minimal subsystem of $X'$. It is not hard to check that $X'$ contains no other minimal subsystems, and so $X' \in S(j,i)$. It remains to show that $c_{X'}(n) = c_X(n) + in$. 

For this, consider the following $1$-block factor map $\phi$ applied to $X'$. Since the alphabets of the $S_p$ are disjoint, we may map any letter in the alphabet of $S_p$ to $p$, and leave other letters (for $p \in \{1, \ldots, j-1\}$) unchanged. The map $\phi$ induces a surjection from $\mathcal{L}_{n}(X')$ to $\mathcal{L}_n(X)$ for all $n$. We claim that every word in $\mathcal{L}(X)$ which is not constant has only a single $\phi$-preimage. To see this, consider $w \in \mathcal{L}(X)$ containing multiple letters. Without loss of generality, we can extend $w$ on the left and right so that $w$ contains some $a^{n_m^{a}}$ as a prefix and $b^{n_m^{b}}$ as a suffix; if the extension has only one preimage, then of course $w$ did as well. Then, by construction of $x'$, the only subword of $x'$ mapping to $w$ under $\phi$ is obtained by replacing every maximal subword of the form $p^{n_k^p}$ in $x$ by $w_k^p$ for $p \in \{j-i+1, \ldots, j\}$. (The only possible ambiguity comes from $a^{n_m^a}$ and $b^{n_m^b}$, but recall that $w_k^p$ is a prefix and suffix of $w_{k'}^p$ for all $k' > k$, and so even if $a^{n_m^a}$ and/or $b^{n_m^b}$ were portions of longer runs of $a$'s or $b$'s in $x$, the corresponding word in $x'$ still contains $w_m^a$ and/or $w_m^b$ at those locations if $a$ and/or $b$ are in $\{j-i+1,\ldots, j\}$.)

On the other hand, for $k \in \{j-i+1, \ldots, j\}$, any constant word of the form $k^n$ has every word in $\mathcal{L}_n(S_k)$ as a preimage, since $S_k \subset X'$, and all words in $\mathcal{L}_n(S_k)$ map to $k^n$ under $\phi$. Since $c_{S_k}(n) = n+1$ for all $n$, this means that all such words have $n+1$ preimages under $\phi$. Combining this yields $c_{X'}(n) = 1 \cdot (c_X(n) - i) + (n+1)i = c_X(n) + in$ for all $n$.

Now, the proof from Section  \ref{sharpnessof1} provides examples of $X \in S(j, 0)$ demonstrating sharpness of the bounds \[c_X(n) \greatersup (j+1)n\;\;\; \mbox{ and } \;\;\; c_X(n) \greaterinf jn\] in the $i = 0$ case. The procedure above yields, for any $0 \leq i \leq j$, $X' \in S(j,i)$ with $c_{X'}(n) = c_X(n) + in$, and so such $X'$ demonstrate the sharpness of the more general bounds (1) and (2) from Theorem~\ref{maintheoremintro}.

\subsection{Proof of Theorem \ref{Theorem2.5}}
We sketch the proof of Theorem \ref{Theorem2.5} here. Suppose $X$ is a transitive subshift $X$ with a recurrent transitive point $x$ such that $X$ properly contains an infinite minimal subshift $M$. 
Because $M \neq X$, there is an $r \geq 1$ such that $\mathcal{L}_r(M) \neq \mathcal{L}_r(X)$. 
Then define a factor map $\pi$ on $X$ such that 
\[\pi(z)_k = \begin{cases} 0  & \text{ if } z_k\cdots z_{k+r-1} \in  \mathcal{L}_r(M) \\
1 & \text{ otherwise. } 
\end{cases}\]
The image $\pi(x)$ is a recurrent transitive point for $\pi(X)$, and the subshift $\pi(X)$ contains a unique minimal subshift $\pi(M) = \{0^{\infty}\}$. Since $\pi(X) \neq \{0^{\infty}\}$, Theorem \ref{OrmesPavlovTheorem} gives $c_{\pi(X)}(n) \greatersup 1.5n$. 
Using an estimate as in Lemma \ref{factorlemma} with $i=1$ yields $c_X(n) \greatersup 2.5n$. 
Finally, Theorem~\ref{OrmesPavlovTheorem} also implies that there exist examples of such $\pi(X)$ demonstrating sharpness of 
$c_{\pi(X)}(n) \greatersup 1.5n$, and the reader may check that the examples constructed in \cite{OP} were similar to those
from Section~\ref{sharpnessof1}, relying only on rapid growth of an auxiliary sequence. Therefore, the same technique used 
in Section~\ref{sharpnessi>0} yields $X$ demonstrating sharpness of $c_X(n) \greatersup 2.5n$.


\section{General transitive systems} \label{nonrecurrent}

Our main theorem in the recurrent case, Theorem \ref{maintheoremintro}, provides bounds in terms of $i$ and $j$ for subshifts in the sets $S(j, i)$.  Our main theorem in the general transitive case, Theorem \ref{maintheoremnonrecurrentintro}, offers similar bounds in terms of sets that we will refer to as $T(j, i)$. Here $T(j,i) \supset S(j,i)$ is the set of all transitive subshifts with $j\geq 1$ minimal subsystems, exactly $i$ of which are infinite where $0 \leq i \leq j$. 

\subsection{Two or fewer subsystems}  We first tackle the cases where $j\leq 2$. When $j=1$, the results of Theorems \ref{SturmianTheorem} and \ref{MH} yield the minimal complexity sequence of $c_X(n)=n+1$ for $X$ not periodic. 

When $j=2$, the orbit closure of the sequence 
$$x=\ldots 0000.11111\ldots$$
produces a transitive system in $T(2,0)$ satisfying $c_X(n) = n+1$ for all $n$.

\begin{theorem} \label{nonrecurrentnotEPBbound}
	Let $X \in T(2,i)$ where $i >0$. Then
	\[\liminf_{n \rightarrow \infty}(c_X(n) - (i+1)n) > -\infty. \] Moreover, this bound is optimal in that the $-\infty$ cannot be replaced by any integer. 
\end{theorem}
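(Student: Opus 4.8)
The plan is to obtain the lower bound by the factor‑map collapse of Section~\ref{pimap} combined with the Morse--Hedlund theorem, and to obtain optimality from an explicit one‑parameter family of ``gluings'' in which a long common word separates the two minimal pieces.

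\textbf{The lower bound.} Let $M_1,M_2$ be the minimal subsystems of $X$, exactly $i$ of them infinite. Since $M_1,M_2$ are disjoint compact sets, there is an $r$ with $\mathcal{L}_r(M_1)\cap\mathcal{L}_r(M_2)=\varnothing$; I would fix such an $r$ and form the $r$‑block map $\phi$ (memory $0$, anticipation $r-1$) that collapses every window lying in $\mathcal{L}_r(M_p)$ to a new symbol $a_p$ and is the identity on all other windows, exactly as in Section~\ref{pimap}. Then $\phi(M_p)=\{a_p^\infty\}$, so $\phi(X)$ is a transitive subshift containing two distinct fixed points, hence is infinite, hence $c_{\phi(X)}(m)\ge m+1$ by Theorem~\ref{MH}. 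Running the preimage count from the proof of Lemma~\ref{factorlemma} (each word of $M_p$ of length $n$ has $\phi$‑image $a_p^{\,n-r+1}$; the remaining words of $\mathcal{L}_{n-r+1}(\phi(X))$ have at least one preimage each; all these preimage sets are pairwise disjoint) yields, for $n>r$,
\[ c_X(n)\ \ge\ c_{M_1}(n)+c_{M_2}(n)+\bigl(c_{\phi(X)}(n-r+1)-2\bigr)\ \ge\ (i+1)n+(2-r), \]
using $c_{M_p}(n)\ge n+1$ for the $i$ infinite subsystems and $c_{M_p}(n)\ge 1$ otherwise. Hence $\liminf_n\bigl(c_X(n)-(i+1)n\bigr)\ge 2-r>-\infty$, which is the asserted bound.

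\textbf{Optimality: the construction.} Fix an integer $N$; I want $X\in T(2,i)$ with $\liminf_n(c_X(n)-(i+1)n)<N$. For $i=2$ I would take $M_1,M_2$ to be two distinct Sturmian subshifts on $\{0,1\}$ with slopes $\beta_1\ne\beta_2$; for $i=1$, $M_1$ a Sturmian subshift on $\{c,d\}$ and $M_2=\{c^\infty\}$. Let $r$ be least with $\mathcal{L}_r(M_1)\cap\mathcal{L}_r(M_2)=\varnothing$. In the first case, since every length-$n$ factor of a Sturmian of slope $\beta$ has a number of $1$'s within $1$ of $n\beta$, one gets $r\le 2/|\beta_1-\beta_2|+O(1)$, so taking $\beta_2$ close enough to $\beta_1$ makes $r>2-N$; in the second case $\mathcal{L}(M_1)\cap\mathcal{L}(M_2)=\{c^k:0\le k\le R\}$ with $R$ the longest $c$‑run of $M_1$, so $r=R+1$, and a small enough slope for $M_1$ makes $R$ (hence $r$) as large as desired. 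Now choose a common word $w\in\mathcal{L}_{r-1}(M_1)\cap\mathcal{L}_{r-1}(M_2)$ (for $i=1$, $w=c^{r-1}$), choose an $M_2$‑point $q$ and an $M_1$‑point $p$ with $q_0\cdots q_{r-2}=p_0\cdots p_{r-2}=w$, and glue: set $x_k=q_k$ for $k\le r-2$ and $x_k=p_k$ for $k\ge 0$ (consistent on $[0,r-2]$), and $X=\overline{\mathcal{O}(x)}$. Since the right tail of $x$ is a one‑sided point of the minimal $M_1$ and the left tail a one‑sided point of $M_2$, we get $\omega(x)=M_1$ and $\alpha(x)=M_2$, so the minimal subsystems of $X$ are precisely $M_1\ne M_2$, exactly $i$ infinite; thus $X\in T(2,i)$.

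\textbf{Why these examples are extremal.} The point of the gluing is that \emph{every} length-$r$ window of $x$ lies in $\mathcal{L}_r(M_1)\cup\mathcal{L}_r(M_2)$: a window at a coordinate $\ge 0$ sits inside the $M_1$‑tail, one at a coordinate $\le -1$ sits inside the $M_2$‑tail. Hence the map $\phi$ from the first part (with this same $r$) sends $x$ to $a_2^\infty.a_1^\infty$, so $\phi(X)=\overline{\mathcal{O}(a_2^\infty.a_1^\infty)}$ has $c_{\phi(X)}(m)=m+1$ exactly. Meanwhile, because $\mathcal{L}(X)$ is the set of factors of $x$, for $n>r$ one has $\mathcal{L}_n(X)=\mathcal{L}_n(M_1)\cup\mathcal{L}_n(M_2)\cup F_n$, where $F_n$ is the set of the $n-r$ factors of $x$ straddling the junction (the one‑sided tails realize all of $\mathcal{L}_n(M_1)$ and $\mathcal{L}_n(M_2)$ by uniform recurrence), so $c_X(n)\le (n+1)+c_{M_2}(n)+(n-r)$. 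Comparing with the first‑part lower bound, which is now an equality chain since $c_{\phi(X)}(n-r+1)=n-r+2$ and $c_{M_1}(n)=n+1$ (Theorem~\ref{SturmianTheorem}), forces $c_X(n)=(n+1)+c_{M_2}(n)+(n-r)=(i+1)n+2-r$ for all $n>r$. Therefore $\liminf_n(c_X(n)-(i+1)n)=2-r<N$, and as $N$ was arbitrary the $-\infty$ cannot be improved to any integer. I expect the main work to lie in the optimality half: verifying that Sturmian pairs with separation level $r$ as large as we wish exist and that nothing new enters $\omega(x)$ or $\alpha(x)$, and in particular establishing the clean‑junction claim $\phi(x)=a_2^\infty.a_1^\infty$, which is exactly what makes the crude upper bound $c_X(n)\le(n+1)+c_{M_2}(n)+(n-r)$ tight and drives the $\liminf$ down to $2-r$.
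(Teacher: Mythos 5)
Your lower bound is essentially the paper's argument: collapse the two minimal sets via an $r$-block map, apply Morse--Hedlund to the (infinite, transitive) image, and add $n+1$ extra preimages of $a_p^{\,n-r+1}$ for each infinite $M_p$; the constants match the paper's $c_X(n) > 2n-r+1$ and $c_X(n)>3n-r+1$. For optimality your route genuinely differs in the $i=2$ case. The paper glues two Sturmians with slopes in $\left(\frac{1}{N+1},\frac{1}{N}\right)$ at a \emph{marked} junction (a left tail ending in $1$ against a right tail beginning with $1$, so the seam is flagged by the word $11$, which occurs in neither Sturmian) and then computes $c_X(n)$ by counting right-special words via Lemma~\ref{countinglemma}; you glue seamlessly along a common word of length $r-1$ and squeeze the complexity between the factor-map lower bound and the crude count $c_{M_1}(n)+c_{M_2}(n)+(n-r)$. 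Both work, and your squeeze is a clean way to avoid enumerating right-special words; your $i=1$ example is essentially identical to the paper's $0^{\infty}.s$.

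The one step that does not hold up as written is your justification that $r$ can be made large when $i=2$. The balance-property argument you invoke shows that a word lying in both languages forces $|n\beta_1-n\beta_2|\le 2$, i.e., it yields the \emph{upper} bound $r\le 2/|\beta_1-\beta_2|+O(1)$; it says nothing about the two languages actually \emph{sharing} long factors, which is what you need in order to get $r$ large (so that $2-r<N$). The needed existence statement is true but requires its own argument, and the paper's choice supplies the cheapest one: take both slopes in $\left(\frac{1}{N+1},\frac{1}{N}\right)$. Then for every $n\le N$ both languages equal the set of length-$n$ binary words containing at most one $1$ (two $1$'s would force a factor $10^k1$ with $k\ge N-1$, of length at least $N+1$), so $\mathcal{L}_N(M_1)\cap\mathcal{L}_N(M_2)\neq\varnothing$ and hence $r\ge N+1$. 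With that substitution (or any other construction of Sturmian pairs with long common factors, e.g., slopes with a long common continued-fraction prefix), your gluing and the squeeze go through and give $\liminf_n\bigl(c_X(n)-(i+1)n\bigr)=2-r$, as claimed.
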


\begin{proof}
Suppose $X$ contains two minimal subsystems $M_1$ and $M_2$. Then there is an $r>0$ such that $\mathcal{L}_r(M_1) \cap \mathcal{L}_r(M_2) = \varnothing$. Define a factor map  $\pi$ on $X$ such that 
\[\pi(z)_k = \begin{cases} 
i  & \text{ if } z_k\cdots z_{k+r-1} \in  \mathcal{L}_r(M_i), \text{ for } i=1,2 \\
0 & \text{ otherwise. } 
\end{cases}\]
Let $y=\pi(x)$. Then, for all $n \geq 1$, the words $1^n$ and $2^n$ occur in $y$. This means that $y$ is not periodic, so, by the Morse-Hedlund Theorem, $c_Y(n) \geq n+1$ for all $n$. Since $i > 0$, we may assume without loss of generality that $M_1$ is infinite, and so again by Morse-Hedlund, $c_{M_1}(n) \geq n+1$ for all $n$. 
Since $\pi$ is an $r$-block map, for $n \geq r$, the word $1^{n-r+1}$ has at least $c_{M_1}(n) \geq n+1$ 
$\pi$-preimages, so we obtain 
$$c_X(n) \geq c_{\pi(X)}(n-r+1)+n > 2n-r+1.$$

If $M_2$ is also infinite, then both $1^{n-r+1}$ and $2^{n-r+1}$ have 
at least $n+1$ preimages, and all of those preimages are distinct. 
Therefore, 
$$c_X(n) \geq c_{\pi(X)}(n-r+1)+2n > 3n-r+1.$$

We also claim that the $-\infty$ in Theorem~\ref{nonrecurrentnotEPBbound} cannot be replaced by any integer. We first treat the $i = 1$ case: choose any $N > 3$ and define a Sturmian subshift $Z \subset \{0,1\}^{\z}$ created by $\beta$ where $\frac{1}{N+1} < \beta < \frac{1}{N}$. Then $10^k1 \in \mathcal{L}(Z)$ if and only if $k = N-1$ or $N$. Consider a right-infinite word $s$ in $Z$, let $$x = 0^{\infty}.s,$$
and let $X = \overline{\mathcal{O}(x)}$.
Then, for $1 \leq n < N$, there is only one right-special word in $\mathcal{L}_n(X)$, namely, $0^n$. Therefore  $c_X(N) = N+1$. Sturmian systems 
contain exactly one right-special word of length $n$ for every $n \geq 1$. Therefore, when $n\geq N$, 
there are two right-special words in $X$: $0^n$ and a different one that is a subword of $s$. Lemma~\ref{countinglemma} then implies that $c_X(n) = 2n-N+1$ for $n \geq N$, so $\liminf c_X(n) - 2n = -N+1$. Since $N$ could be arbitrarily large, there is no uniform lower bound in the $i = 1$ case of Theorem~\ref{nonrecurrentnotEPBbound}.

For the $i = 2$ case, consider two Sturmian subshifts $Z_1, Z_2 \subset \{0,1\}^{\z}$ created by distinct $\beta_1,\beta_2 \in \left(\frac{1}{N+1} , \frac{1}{N}\right)$ for any $N>3$. Consider a left-infinite word $r \in Z_1$ ending in $1$ and a right-infinite word $s \in Z_2$ beginning with $1$. Let $$x = r.s,$$
and let $X = \overline{\mathcal{O}(x)}$.

There are three types of words in $x$: subwords of $r$, subwords of $s$, and words that contain the word $11$. For $n \geq 2$ there are exactly $n-1$ subwords of $x$ that contain $11$. For $1 \leq n \leq N$, $\mathcal{L}_n(Z_1)=\mathcal{L}_n(Z_2)$; both of these equal the set of words of length $n$ in $\{0,1\}^n$ that contain at most one 1. 
Therefore $c_X(n) = (n-1) + (n+1) = 2n$ for $1 \leq n \leq N$. 

For $n > N$, there are at most three right-special words: a single word $w_1$ in $\mathcal{L}_n(Z_1)$ that can be extended in two ways in $\mathcal{L}(Z_1)$, a single word $w_2$ in $\mathcal{L}_n(Z_2)$ that can be extended in two ways in $\mathcal{L}(Z_2)$, and the $n$-letter suffix $w_3$ of 
$r$. 
Therefore, by Lemma~\ref{countinglemma}, $c_X(n) = 3n - N$ for $n > N$, and so $\liminf c_X(n) - 3n = -N$, implying that there is no uniform lower bound in the $i = 2$ case of Theorem~\ref{nonrecurrentnotEPBbound}. \end{proof}

\subsection{Three or more subsystems}
We now proceed with the proof of Theorem \ref{maintheoremnonrecurrentintro}, which gives the bound $c_X(n) \greaterinf (j+i)n$ for $X \in T(j, i)$ with $j \geq 3$.  For such an $X$, let $x \in X$ be a transitive point.
Consider the same $r$-block factor map $\pi$ as constructed in Section \ref{pimap}. Then $\pi(X) \in T(j,0)$ for $j \geq 3$. Set $y=\pi(x)$ and note that $y$ is a transitive point for $\pi(X)$. We also note that $y$ cannot be eventually periodic in both directions, or else $\pi(X)$ would have at most two minimal subsystems (the periodic alpha-limit and omega-limit sets of $y$), a contradiction.

We then assume without loss of generality that $y$ is not eventually periodic to the right. (If $y$ were eventually periodic to the right, then below we would consider left-special, as opposed to right-special words, and arrive at the same conclusion.) 
	
For each $p \in \{1, \ldots, j\}$ and $n \in \n$, we claim $a_p^n$ is right-special. Indeed,  $a_p^{n+1}$ must occur in $y$ for all $p$ since $\{a_p^{\infty}\}$ is a minimal subsystem of $\pi(X)$. Since $y$ is not eventually periodic to the right, the word $a_p^nb$ must also occur in $y$. 

Therefore $c_{\pi(X)}(n) \geq jn$ for all $n \geq 1$. By Lemma~\ref{countinglemma}, we may establish 
$c_{\pi(X)}(n) \greaterinf jn$ by finding an infinite set of $n$ for which $\#RS_{\pi(X)}(n) \geq j+1$. Fix $p$ so that $\{a_p^{\infty}\}$ is in the omega-limit set $\omega(y)$. Then there exist infinitely many $n$ for which $ba_p^nb$ occurs in $y$. Recall that $ba_p^{n+1}$ occurs in $y$ for all $n\geq 1$, so $ba_p^n \in RS_{\pi(x)}(n+1)$, i.e. $\#RS_{\pi(x)}(n+1) \geq j+1$, for infinitely many $n$. 

We then have $c_{\pi(X)}(n) \greaterinf jn$. Finally, as in Lemma \ref{factorlemma}, for every $p$ with $\pi^{-1}\{a_p^{\infty}\}$ infinite, the number of 
$\pi$-preimages of $a_p^{n-r+1}$ is at least $n$, establishing \[c_{X}(n) \greaterinf (j+i)n.\]

\begin{remark} \rm \label{rem23}
The proof above also works if $X \in S(2,i)$ has a transitive point $x$ such that $\pi(x)$ is not both eventually periodic to the right and eventually periodic to the left. 
\end{remark}

\begin{theorem}
  Let $X \in T(j, i)$ where $j \geq 3$.  Then  the bound 
\[c_X(n)  \greatersup  (j+i)n\]
holds and is sharp. 
\end{theorem}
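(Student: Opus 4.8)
The bound $c_X(n)\greatersup (j+i)n$ is immediate from Theorem~\ref{maintheoremnonrecurrentintro}, since $c_X(n)\greaterinf(j+i)n$ and $\liminf_{n\to\infty}(c_X(n)-(j+i)n)\le\limsup_{n\to\infty}(c_X(n)-(j+i)n)$. So the real content is sharpness: given a nondecreasing unbounded $g:\n\to\n$, I must produce $X\in T(j,i)$ with $\limsup_{n\to\infty}\bigl(c_X(n)-(j+i)n-g(n)\bigr)<\infty$, i.e. $c_X(n)\le (j+i)n+g(n)$ for all large $n$. As in Sections~\ref{sharpnessof1}--\ref{sharpnessi>0} I would treat $i=0$ first and then pass to arbitrary $i$ by the Sturmian-stitching construction of Section~\ref{sharpnessi>0}: replacing, for $i$ of the symbols $a_p$, each constant run $a_p^{m}$ by a suitably prefix/suffix-nested Sturmian word of length $m$ turns $i$ of the finite minimal subsystems into infinite ones while adding exactly $in$ to the complexity function. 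That argument uses only that the base point is a concatenation of long constant runs of rapidly growing (here even pairwise distinct) lengths, so it carries over to the $i=0$ example below essentially unchanged; since all run lengths will be distinct there is no issue of replacing several copies of a run by the same word, and there is no constant ray to treat separately.

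For $i=0$ the examples already in the paper will not do: the recurrent examples of Section~\ref{sharpnessof1}, and more generally any transitive point carrying a one-sided constant ray $a_p^\infty$, possess a right-special word of \emph{every} sufficiently large length besides the trivial $a_1^n,\dots,a_j^n$ (for such a ray the words $a_p^s a_{p+1}^{m}$ become right-special for all $s$), which forces $c_X(n)\ge(j+1)n+O(1)$ and so cannot witness sharpness of the $jn$ bound. The remedy is a \emph{two-sided spiral with no constant ray}. Fix distinct symbols $a_1,\dots,a_j$ and put
\[
x=\cdots B_2^-\,B_1^-\;.\;B_1^+\,B_2^+\cdots,\qquad
B_k^{\pm}=a_1^{m_k^{1,\pm}}a_2^{m_k^{2,\pm}}\cdots a_j^{m_k^{j,\pm}},
\]
where the exponents $m_k^{p,\pm}$ are pairwise distinct and chosen recursively, each larger than the sum of all previously chosen ones. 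Every minimal subsystem of $\overline{\mathcal O(x)}$ lies in $\alpha(x)\cup\omega(x)$, and one checks these sets consist only of the fixed points $a_1^\infty,\dots,a_j^\infty$ together with orbits of ``transition'' points $a_p^\infty a_{p+1}^\infty$; hence $X:=\overline{\mathcal O(x)}\in T(j,0)$. Note $x$ is not recurrent (a word carrying the shortest run of $x$ with its two neighbouring letters occurs only once), consistent with the fact that recurrent examples must obey the stronger bound $c_X(n)\greatersup(j+1)n$ of Theorem~\ref{maintheoremintro}.

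I would then enumerate the right-special words of $X$ exactly as in Section~\ref{sharpnessof1}. Besides $a_1^n,\dots,a_j^n$ (right-special for every $n$), a word can be right-special only if it has the form $a_p^s a_{p+1}^t$ (indices cyclic mod $j$) with $t$ the length of some maximal $a_{p+1}$-run of $x$ and $1\le s\le$ (the length of the $a_p$-run immediately preceding that $a_{p+1}$-run); since all runs have distinct, rapidly growing lengths, the lengths of these non-constant right-special words fill a union $\mathcal I$ of pairwise disjoint intervals, one per maximal run, the interval attached to a given run having length equal to that of the preceding $a_p$-run and left endpoint at least the length of that $a_{p+1}$-run. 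Moreover no word of $X$ has three right extensions, so Lemma~\ref{countinglemma} holds with equality and
\[
c_X(n)=jn+\#\bigl(\mathcal I\cap[1,n)\bigr).
\]

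Finally, as at the end of Section~\ref{sharpnessof1}, I would choose the exponents to grow fast enough relative to $g$ --- so that $g$ evaluated at the left endpoint of each interval of $\mathcal I$ exceeds the combined length of that interval and all earlier ones, which is possible because $g$ is unbounded --- whence $\#(\mathcal I\cap[1,n))\le g(n)$ for all large $n$ and $\limsup_{n\to\infty}(c_X(n)-jn-g(n))\le 0$. The Sturmian stitching of Section~\ref{sharpnessi>0} then gives, for this same $g$, some $X\in T(j,i)$ with $c_X(n)=(j+i)n+\#(\mathcal I\cap[1,n))\le(j+i)n+g(n)$ for large $n$, so $c_X(n)\greatersup(j+i)n+g(n)$ fails; thus the bound is sharp. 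The step I expect to be most delicate is the right-special word enumeration --- in particular verifying the ``at most two right extensions'' claim and handling the central boundary of the spiral and the cyclic transition $a_j\to a_1$ --- but it is entirely parallel to the computation carried out in Section~\ref{sharpnessof1}.
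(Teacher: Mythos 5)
The first half of your argument (deducing the bound from Theorem~\ref{maintheoremnonrecurrentintro}) matches the paper, and your reduction of sharpness to an $i=0$ example followed by Sturmian stitching is also the paper's strategy. But your $i=0$ example is broken, and the reason you give for abandoning the paper-style one-sided construction is mistaken. In your two-sided spiral, read the \emph{left} half from left to right: the block $B_{k+1}^-$ ends with $a_j^{m_{k+1}^{j,-}}$ and is immediately followed by the much shorter run $a_1^{m_k^{1,-}}$ at the start of $B_k^-$. The words $a_j^{s}a_1^{m_k^{1,-}}$ are then right-special for every $1\le s\le m_{k+1}^{j,-}$ (each occurs at this junction followed by $a_2$, and also occurs farther out followed by $a_1$), so the interval of lengths attached to this junction is $(m_k^{1,-},\,m_k^{1,-}+m_{k+1}^{j,-}]$ --- its length is the \emph{larger} exponent $m_{k+1}^{j,-}$, chosen after and far exceeding its left endpoint $m_k^{1,-}$. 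No choice of a slowly growing $g$ can dominate these counts; worse, since $m_{k+1}^{1,-}<m_{k+1}^{j,-}$, consecutive such intervals overlap and their union covers a tail of $\n$, so in fact $\#RS(n)\ge j+1$ for all large $n$ and $c_X(n)\ge (j+1)n-O(1)$. Your example therefore cannot witness sharpness of $(j+i)n$. The structural obstruction is unavoidable for a two-sided spiral: on whichever side the runs grow outward against the reading direction, every maximal run is preceded by a longer one.

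The idea you are missing is the paper's: keep a one-sided constant ray, but on a symbol that does \emph{not} recur in the spiral. The paper takes $x=0^\infty.1^{n_1}2^{n_2}\cdots(j-1)^{n_{j-1}}1^{n_j}\cdots$, using only the $j-1$ symbols $1,\dots,j-1$ in the right half and a fresh symbol $0$ for the ray, which still yields $j$ minimal subsystems. Your objection --- that a ray $a_p^\infty$ forces $a_p^s a_{p+1}^m$ to be right-special for all $s$ --- presupposes that $a_p$ recurs elsewhere with variable-length $a_{p+1}$-runs after it; here $0$ occurs only in the ray, so each $0^s1^m$ occurs at a unique position with a unique continuation and no word containing $01$ is ever right-special. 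The ray then contributes only the trivial right-special words $0^n$, every nontrivial right-special word lies in the one-sided spiral where runs increase left to right, the exceptional intervals are $(n_k,n_k+n_{k-1}]$ with short length $n_{k-1}$ and large left endpoint $n_k$, and the $g$-domination argument you outline goes through exactly as in Section~\ref{sharpnessof1}.
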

\begin{proof}
The bound itself follows immediately from Theorem \ref{maintheoremnonrecurrentintro}.  To see that the bound is sharp, let $g: \n \rightarrow \n$ be any nondecreasing unbounded function.  
Consider a point of the form \begin{equation} \label{sharpexTj0} x = 0^\infty .1^{n_1} 2^{n_2}  \cdots (j-1)^{n_{j-1}} 1^{n_j} 2^{n_{j+1}} \cdots  (j-1)^{n_{2j-2}}\cdots, \end{equation} where $n_1 << n_2 << n_3 << \cdots$, and 
set $X = \overline{\mathcal O(x)}$. 
  
For any $n \geq 1$, $0^n, 1^n, \cdots , (j-1)^n$ are all right-special in $X$. No word in $X$ containing $01$ is right-special, nor is any word that contains three distinct symbols.   The only other right-special words are of the form 
\[p^m (p+1)^{n_k} \text{ where } k \equiv (p+1) \mod (j-1), 1 \leq m \leq n_{k-1} \]
or 
\[(j-1)^m 1^{n_k} \text{ where } k \equiv 1 \mod (j-1), 1 \leq m \leq n_{k-1}. \]
In other words, we have $\#RS(n)=j+1$ only for $n \in (n_k,n_k+n_{k-1}]$. Therefore, if $n \in [n_k,n_{k+1} )$, the number of right-special words of length less than $n$ which are not of the form $a^n$ is at most
$\displaystyle \sum_{i =1}^{k-1} n_i$. 
If the sequence $(n_k)$ grows sufficiently fast, then $\displaystyle g(n_k)>\sum_{i =1}^{k-1} n_i$, and then Lemma~\ref{countinglemma} implies that
\[c_X(n)  < j n + g(n) \]
for all $n$. 

For the $T(j, i)$ bound where $j \geq 3$ and $i > 0$, consider the family of examples obtained by replacing the blocks $0^\infty$, $1^{n_1}$, $2^{n_2}$, ..., $(i-1)^{n_{i-1}}$, $1^{n_{j}}$, $2^{n_{j+1}}$, ..., $(i-1)^{n_{j+i-2}}$, ... with blocks from Sturmian sequences as in Section \ref{sharpnessi>0}. Then the estimate in Lemma \ref{factorlemma} gives the sharpness of the bound. 
\end{proof}


\section{Generic Measures} \label{measures}

We say that a point $x$ in a subshift $X$ is {\em generic} for a measure $\mu$ if the measures $\nu_n(x) := \frac{1}{n} \sum_{i = 0}^{n-1} \delta_{\sigma^i x}$ converge to $\mu$ in the weak topology, i.e., if 
\[\lim_{N \rightarrow \infty} \frac{1}{N} \sum_{i = 0}^{N-1} f(\sigma^i(x)) = \int f\, d\mu\] 
for all continuous $f: X \rightarrow \real$. The pointwise ergodic theorem implies that whenever $\mu$ is ergodic, $\mu$-almost every $x \in X$ is generic for $\mu$.
We say that a measure $\mu$ is a {\em generic measure} on $X$ if there exists $x \in X$ that is generic for $\mu$. All ergodic measures are clearly generic by the pointwise ergodic theorem, but generic measures are not necessarily ergodic; for instance, it is easily checked that 
\[
x = 0^{\infty}.011000111100000111111\ldots 
\]
is generic for $\mu = \frac{\delta_{0^{\infty}} + \delta_{1^{\infty}}}{2}$.

Our goal is to provide bounds on the number of generic measures a transitive subshift can support. One of the more general results in this vein is the following theorem of Cyr and Kra, which does not assume transitivity of the subshift, but also does not control for the number of atomic measures supported on periodic subshifts.

\begin{theorem}[\cite{CK}] \label{CKsup}
	Suppose $X$ is a subshift, and there exists $k \geq 3$ such that 
	\[\limsup_{n \to \infty} \frac{c_X(n)}{n} < k. \]
	If $X$ has a generic measure $\mu$ and there is a generic point $z \in X$ for $\mu$ such that the subshift $Z=\overline{\mathcal{O}(z)}$ is not uniquely ergodic, then $X$ has at most $k-2$ distinct, nonatomic, generic measures. 
\end{theorem}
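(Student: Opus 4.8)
The plan is to argue by contradiction using the Rauzy-graph circulation count that underlies \cite{B} and \cite{CK}. Assume $X$ carries $m\ge k-1$ distinct nonatomic generic measures $\mu_1,\dots,\mu_m$; the goal is to force $\limsup_{n\to\infty}c_X(n)/n\ge k$. For each $n$ let $G_n$ be the Rauzy graph with vertex set $\mathcal L_n(X)$ and, for each $w_1\cdots w_{n+1}\in\mathcal L_{n+1}(X)$, a directed edge from $w_1\cdots w_n$ to $w_2\cdots w_{n+1}$, so $|V(G_n)|=c_X(n)$ and $|E(G_n)|=c_X(n+1)$. Writing $d_n$ for the dimension of the real vector space of circulations (divergence-free flows) on $G_n$ and $\gamma_n$ for the number of connected components of $G_n$, one has $d_n=|E(G_n)|-|V(G_n)|+\gamma_n$, i.e. $c_X(n+1)-c_X(n)=d_n-\gamma_n$. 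Every shift-invariant probability measure on $X$ restricts, for each $n$, to a nonnegative circulation on $G_n$, and distinct measures give distinct circulations once $n$ is large enough for length-$n$ cylinders to separate them. Thus it suffices to show $d_n-\gamma_n\ge m+1$ for all large $n$: telescoping then gives $c_X(n)\ge(m+1)n-O(1)\ge kn-O(1)$ and hence $\limsup c_X(n)/n\ge k$, the contradiction.

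The core input is a Boshernitzan-type lemma: \emph{each} nonatomic $\mu_j$ raises $d_n-\gamma_n$ by a full unit for all large $n$. Fix $x_j$ generic for $\mu_j$; the $x_j$ are distinct (a point is generic for at most one measure) and not eventually periodic to the right (otherwise forward averages would converge to an atomic measure). Nonatomicity gives $\max_{w\in\mathcal L_n(X)}\mu_j([w])\to 0$, so for each large $n$ one chooses $A_j(n)\subseteq\mathcal L_n(X)$ carrying almost all of $\mu_j$, with $|A_j(n)|\to\infty$ and almost no $\mu_j$-mass leaving $A_j(n)$ along edges; genericity keeps the forward tail of the Rauzy path of $x_j$ inside the subgraph on $A_j(n)$ for an overwhelming fraction of its length. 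The strongly connected piece of $G_n$ that this tail fills carries a circulation, and — since the path is not eventually periodic — it is not a single cycle, so that piece has circulation dimension at least $2$; subtracting the one unit its component costs in $\gamma_n$ leaves a full net unit. Distinctness of the $\mu_j$ lets one take the $A_j(n)$ pairwise disjoint for large $n$ (automatic when the $\mu_j$ are mutually singular, as in Boshernitzan's minimal case; in general arranged by a compactness argument on the empirical measures $\nu_N(x_j)$), so the $m$ contributions are charged to independent cycles and $d_n-\gamma_n\ge m$ for all large $n$.

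To gain the last unit I bring in $Z=\overline{\mathcal O(z)}$. Since $\mathcal L_n(Z)\subseteq\mathcal L_n(X)$ and $Z$ is transitive, the Rauzy graph of $Z$ is a connected subgraph of $G_n$; because $Z$ is not uniquely ergodic it carries an ergodic measure $\tau$ distinct from every ergodic component of the single measure $\mu$ for which $z$ is generic, so the circulation space of the Rauzy graph of $Z$ has dimension at least $2$. For large $n$ this structure lives on words whose $\mu_j$-mass is negligible, so it adds one more independent cycle beyond the $m$ already found — either in a new component (net $+1$) or inside a component already charged to some $\mu_j$ (again net $+1$). The precise hypothesis is used exactly here: the tension between ``$z$ is generic for one measure'' and ``$Z$ supports two ergodic measures'' is what guarantees this extra circulation is genuinely present and genuinely new. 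Combining the two steps yields $d_n-\gamma_n\ge m+1$ for all large $n$, finishing the argument.

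The main obstacle is the bookkeeping behind ``raises $d_n-\gamma_n$ by a full net unit'' when $G_n$ is far from connected. A single-vertex component $\{a^n\}$ is a self-loop contributing equally to $d_n$ and to $\gamma_n$, hence is complexity-neutral — which is precisely why the theorem must restrict to \emph{nonatomic} measures (adjoining a fixed point $a^\infty$ adds a generic atomic measure but no complexity, as noted in the introduction) and why an all-atomic failure of unique ergodicity in $Z$, such as the orbit closure of $0^\infty.1^\infty$, is harmless in isolation yet still helps once embedded in the larger graph. One must therefore decompose each $G_n$ into components, attribute each net unit of $d_n-\gamma_n$ unambiguously to some $\mu_j$, to the $Z$-circulation, or to a path joining clusters, rule out double counting when $Z$'s component overlaps one carrying some $A_j(n)$, and verify that the attribution is stable as $n$ grows; this, together with replacing mutual singularity by a direct argument on the generic points (since generic measures need not be ergodic), is where the real work of \cite{CK} lies.
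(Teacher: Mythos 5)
This statement is imported verbatim from Cyr--Kra \cite{CK}; the paper states it without proof, so there is no internal argument to compare yours against, and I can only judge the proposal on its own terms. You have correctly identified the machinery underlying both \cite{B} and \cite{CK}: the Rauzy graph $G_n$, the identity $c_X(n+1)-c_X(n)=d_n-\gamma_n$ relating the complexity increment to cycle-space dimension minus number of components, and the strategy of charging one net unit of $d_n-\gamma_n$ to each nonatomic generic measure and one more to the non-uniquely-ergodic $Z$. But the two steps that carry all the weight are not established. The disjointness of the sets $A_j(n)$ is asserted, not proved, and the naive version fails: distinct generic measures need not be mutually singular and can have identical supports (take two different convex combinations of the same pair of ergodic measures), so the words carrying ``almost all'' of the mass of $\mu_1$ and of $\mu_2$ cannot in general be made disjoint, and the compactness argument you gesture at does not produce disjoint clusters. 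This is exactly where the generic-measure case is harder than Boshernitzan's ergodic case, and it is the core of the work in \cite{CK}.

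The step extracting the extra unit from $Z$ also contains a false claim: it is not true that $Z$ must carry an ergodic measure distinct from every ergodic component of $\mu$. For $z=0^\infty.011000111100000\ldots$ (the example in Section 5 of this paper), $z$ is generic for $\mu=\frac{1}{2}(\delta_{0^\infty}+\delta_{1^\infty})$ and $Z$ is not uniquely ergodic, yet both ergodic measures on $Z$ are ergodic components of $\mu$. (The conclusion that the Rauzy graph of a transitive, non-uniquely-ergodic $Z$ has cycle rank at least $2$ does hold, but for the elementary reason that cycle rank $1$ forces $c_Z(n+1)=c_Z(n)$ and hence, by Morse--Hedlund, $Z$ periodic.) Finally, you explicitly defer the no-double-counting argument --- showing that the cycle contributed by $Z$ is independent of those charged to the $\mu_j$ even when $Z$'s word set coincides with the high-mass words of some $\mu_j$, as happens when $\mu$ is itself one of the $\mu_j$ --- and without it the inequality $d_n-\gamma_n\geq m+1$ is not obtained. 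As written this is a reasonable roadmap toward the Cyr--Kra theorem, not a proof of it.
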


Combining the Cyr-Kra result above with others in this paper we obtain the same conclusion under a different hypothesis. 
\begin{theorem}
	Suppose $X$ is a transitive subshift which is the orbit closure of a recurrent point and there exists $k \geq 3$ such that 
	\[\limsup_{n \to \infty} \frac{c_X(n)}{n} < k. \]
	Then $X$ has at most $k-2$ distinct, nonatomic, generic measures. 
\end{theorem}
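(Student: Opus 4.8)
The plan is to reduce to the Cyr--Kra result (Theorem~\ref{CKsup}) by showing that, under the hypotheses here, either the hypothesis of Theorem~\ref{CKsup} is satisfied outright, or else $X$ is uniquely ergodic (in which case the conclusion is trivial, with at most one generic measure $\leq k-2$ since $k \geq 3$). So suppose $X = \overline{\mathcal{O}(x)}$ with $x$ recurrent, and suppose $\limsup_{n\to\infty} c_X(n)/n < k$ with $k \geq 3$. If $X$ is uniquely ergodic we are done. Otherwise, $X$ supports at least two distinct ergodic measures, and I want to produce a generic measure $\mu$ together with a generic point $z$ for $\mu$ such that $Z = \overline{\mathcal{O}(z)}$ is not uniquely ergodic; then Theorem~\ref{CKsup} applies directly and gives at most $k-2$ nonatomic generic measures, as desired.

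\textbf{Key steps.} First I would invoke the structure provided earlier in the paper: since $X$ is transitive with a recurrent transitive point and $\limsup c_X(n)/n < \infty$, the complexity is linear, so Theorem~\ref{maintheoremintro} (and the $j=1$ discussion) constrains the number and type of minimal subsystems. Concretely, if $X$ has $j \geq 2$ minimal subsystems, $i$ of them infinite, then $c_X(n) \greaterinf (j+i)n$, forcing $j+i \leq k-1$; similarly Theorem~\ref{Theorem2.5} forces that if $X$ properly contains an infinite minimal subsystem then $c_X(n) \greatersup 2.5n$, so $k > 2.5$, i.e. $k \geq 3$, consistent but not yet enough. The real point is to locate the non-uniquely-ergodic orbit closure. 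If $X$ itself has two or more minimal subsystems, then I can take $z = x$ itself: $Z = \overline{\mathcal{O}(x)} = X$ contains $\geq 2$ minimal subsystems hence $\geq 2$ ergodic measures, so $X$ is not uniquely ergodic, and (choosing, if necessary, a Cesàro-limit point $\mu$ of $\nu_n(x)$ along a subsequence, noting $x$ is generic for $\mu$ along that subsequence — or better, passing to a genuinely generic point, which exists since any weak-* limit point of $\nu_n(x)$ is an invariant measure and the recurrence/transitivity lets us pick $z$ generic for it inside $X$) Theorem~\ref{CKsup} applies. If instead $X$ has a \emph{unique} minimal subsystem $M$, then: if $M$ is infinite, then by Theorem~\ref{Theorem2.5} applied when $M \subsetneq X$, or by the Boshernitzan-type bound, $M$ alone could carry several ergodic measures only if $c_M(n)/n$ is large — but $M$ minimal with $\limsup c_M(n)/n \leq \limsup c_X(n)/n < k$; here the clean move is: if $X$ is not uniquely ergodic, there are two distinct ergodic measures $\mu_1 \neq \mu_2$; take $z$ generic for $\mu_1$. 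If $\overline{\mathcal{O}(z)}$ is not uniquely ergodic, invoke Theorem~\ref{CKsup} and finish. If $\overline{\mathcal{O}(z)}$ \emph{is} uniquely ergodic for every choice of ergodic $\mu_1$, then in particular the support of every ergodic measure is uniquely ergodic, which (together with transitivity and recurrence forcing the minimal set structure above) I claim forces $X$ uniquely ergodic — a contradiction.

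\textbf{Main obstacle.} The delicate step is the last one: ruling out the possibility that $X$ is not uniquely ergodic yet \emph{every} generic point $z$ has uniquely ergodic orbit closure $\overline{\mathcal{O}(z)}$, so that Theorem~\ref{CKsup} never becomes applicable. Resolving this requires showing that a transitive subshift with a recurrent transitive point which carries $\geq 2$ ergodic measures must contain a generic point whose orbit closure already sees two of them — plausibly by taking $z$ to be the transitive point $x$ itself, or a carefully chosen shift thereof, and arguing that $\overline{\mathcal{O}(x)} = X$ supports all ergodic measures of $X$ (which is immediate), so one only needs $x$ to be generic for \emph{some} measure; this in turn follows by extracting a weak-* convergent subsequence of $\nu_n(x)$ and then using minimality of omega-limit sets plus the complexity bound to upgrade convergence along a subsequence to genuine genericity for an appropriate point in $X$. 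I would also double check the atomic-measure bookkeeping: Theorem~\ref{CKsup} only counts nonatomic generic measures, which is exactly the conclusion sought, so no extra work is needed there. The cleanest write-up will likely just say: take $z$ to be a generic point (which exists inside any subshift of linear complexity by a standard Cesàro argument), note $\overline{\mathcal{O}(z)} \subseteq X$, handle the case $\overline{\mathcal{O}(z)}$ uniquely ergodic separately by observing it then has a unique generic measure and chasing the complexity inequality, and otherwise apply Theorem~\ref{CKsup}.
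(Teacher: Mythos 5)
Your reduction to Theorem~\ref{CKsup} is the same first move as the paper's, but the branch in which Theorem~\ref{CKsup} does not apply is handled incorrectly, and it is precisely the step you flag as the ``main obstacle.'' You try to show that if $X$ is not uniquely ergodic then some generic measure has a generic point $z$ with $\overline{\mathcal{O}(z)}$ not uniquely ergodic. Neither of your suggestions establishes this: taking $z=x$ (the transitive point) fails because a recurrent transitive point need not be generic for any measure --- convergence of the empirical measures $\nu_n(x)$ along a subsequence is not genericity, and there is no general mechanism to ``upgrade'' subsequential convergence to genericity; and the closing claim that ``every generic point having uniquely ergodic orbit closure forces $X$ uniquely ergodic'' is asserted, not proved, and is exactly what needs an argument (a transitive $X$ with two fixed-point minimal subsystems is never uniquely ergodic, yet nothing you wrote rules out that all of its generic points generate uniquely ergodic subsystems). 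So the hypothesis of Theorem~\ref{CKsup} is never verified in the problematic case.

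The paper closes that case by a different mechanism, which is the idea missing from your write-up: it never tries to manufacture a non-uniquely-ergodic orbit closure. If some generic measure has a generic point whose orbit closure is not uniquely ergodic, Theorem~\ref{CKsup} finishes the proof. Otherwise, for each generic measure $\mu_i$ pick a generic point $z_i$; since $Z_i=\overline{\mathcal{O}(z_i)}$ is uniquely ergodic, \emph{every} point of $Z_i$ is generic for $\mu_i$, so distinct generic measures give pairwise disjoint subsystems $Z_i$, each containing its own minimal subsystem of $X$. Hence $k-1$ generic measures would force at least $k-1$ minimal subsystems, and Theorem~\ref{maintheoremintro} (applicable because $X$ has a recurrent transitive point) yields $\limsup_{n\to\infty}(c_X(n)-kn)=\infty$, contradicting $\limsup_{n\to\infty} c_X(n)/n<k$. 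You cite Theorem~\ref{maintheoremintro} only to bound $j+i$ in the abstract; the missing link is the passage from generic measures to pairwise disjoint uniquely ergodic subsystems and hence to distinct minimal subsystems, which is what lets the complexity bound control the number of generic measures directly.
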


\begin{proof}
	Let $X$ be a transitive subshift with a recurrent transitive point. Fix a generic measure $\mu$ for $X$, and let $z$ be a generic point for $\mu$. If $\overline{\mathcal{O}(z)}$ is not uniquely ergodic, then we are done by Theorem \ref{CKsup}. Thus we may assume that $Z=\overline{\mathcal{O}(z)}$ is uniquely ergodic. Since $Z$ is uniquely ergodic, it follows that every point $x \in Z$ is generic for $\mu$ (see, for instance, \cite{W}). 
	
	Now assume for a contradiction that $X$ has $(k-1) \geq 2$ generic measures $\mu_1,\ldots, \mu_{k-1}$, with respective generic points $z_1, \ldots, z_{k-1}$. Then by the preceding paragraph, we obtain $(k-1)$ disjoint subsystems $Z_1 \ldots , Z_{k-1}$ such that for all $x \in Z_i$, $x$ is generic for $\mu_i$. Each $Z_i$ contains a distinct minimal subsystem. Therefore, by Theorem \ref{maintheoremintro}, 
	\[ \limsup_{n \to \infty} (c_X(n) -kn) = \infty, \]
	a contradiction. 
\end{proof}

In general, there does not appear to be a simple way to combine our results with those of \cite{CK} in order to use an upper bound on $\limsup_{n \to \infty} \frac{c_X(n)}{n}$ to bound the number of generic (not necessarily nonatomic) measures in the transitive case. However, with some effort, we are able to show that in the case where $k=3$, i.e., when $X$ is transitive with a recurrent transitive point and
	\[\limsup_{n \to \infty} \frac{c_X(n)}{n} < 3, \]
then $X$ is uniquely ergodic; this is our Theorem \ref{measuresupintro}. 

\subsection{Proof of Theorem \ref{measuresupintro}}  If $X$ is minimal, then this is Theorem 1.5 from \cite{B}.  If $X$ is not minimal, then it cannot be the case that $X$ contains two or more minimal subsystems, since our Theorem \ref{maintheoremintro} would imply that $c_X(n) \greatersup 3n$, which 
would contradict $\limsup_{n \rightarrow \infty}(\frac{c_X(n)}{n}) < 3$.

So $X$ contains a unique minimal subsystem $M_1$. By Theorem 1.5 from \cite{B}, there is a unique (ergodic) measure $\mu$ supported on $M_1$. Consider the factor map $\pi$ (as defined in sub-section \ref{pimap}); then $\mu$ pushes forward under $\pi$ to $\delta_{a_1^\infty}$ in $\pi(X)$, and it is the only measure which does so. So, if we are able to prove that $\pi(X)$ is uniquely ergodic, then its unique measure is $\delta_{a_1^\infty}$, which implies that $\mu$ is the unique measure on $X$.

Since the hypotheses of Theorem~\ref{measuresupintro} are preserved under application of a factor map, we can assume without loss of generality that $X$ has a unique minimal subsystem $\{a_1^{\infty}\}$. We can further reduce (by applying a $1$-block factor map sending $a_1$ to $0$ and all other letters to $1$) to the case where $X \subseteq \{0, 1\}^\z$ and that $X$ has unique minimal subsystem $\{0^{\infty}\}$.  
Toward a contradiction, suppose that such an $X$ has an ergodic $\mu \neq \delta_{0^\infty}$.  

\begin{lemma} \label{genericstructure}
Let $(n_k) \subseteq \n$ be a strictly increasing sequence and $x$ a generic point for $\mu$.  Then for all sufficiently large $k$, $x_{[0, \infty)}$ has the form \[x_{[0, \infty)} = .w_0^{(k)} \; 0^{\geq n_k} \; w_1^{(k)} \; 0^{\geq n_k} \; w_2^{(k)} \; \cdots,  \] where every $w_i^{(k)}$ begins and ends with $1$ and does not contain $0^{n_k}$.
Moreover, $\frac{|w_0^{(k)}|}{n_k} \rightarrow \infty$.
\end{lemma}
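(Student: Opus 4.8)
The plan is to prove the two assertions separately. The displayed decomposition is obtained simply by cutting $x_{[0,\infty)}$ at its maximal long runs of $0$'s, using only standard consequences of $\mu$ being ergodic and distinct from $\delta_{0^{\infty}}$. The ``moreover'' clause $|w_0^{(k)}|/n_k\to\infty$ is the real content, and I would prove it by contradiction, exploiting that genericity is an asymptotic statement whereas $|w_0^{(k)}|$ lives at a scale comparable to $n_k$.

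First I would extract what the hypotheses give about $\mu$. Since $\mu$ is ergodic with $\mu\neq\delta_{0^{\infty}}$, we have $\mu([1])>0$ — otherwise $\mu([0])=1$, hence $\mu(\{x:x_i=0\})=1$ for every $i$ by shift-invariance, forcing $\mu=\delta_{0^{\infty}}$. By the ergodic theorem, $\mu$-a.e.\ $x$ has $x_n=1$ for infinitely many $n\geq 0$, so the decreasing sets $[0^{j}]$ satisfy $\mu\bigl(\bigcap_{j}[0^{j}]\bigr)=\mu(\{x:x_i=0\text{ for all }i\geq 0\})=0$, i.e.\ $\mu([0^{j}])\downarrow 0$. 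On the other hand $\operatorname{supp}(\mu)$ is a nonempty closed shift-invariant set, hence contains a minimal subsystem of $X$, which must be $\{0^{\infty}\}$; thus $0^{\infty}\in\operatorname{supp}(\mu)$ and $\mu([0^{j}])>0$ for every $j$. Now genericity of $x$ gives that $0^{n_k}$ has asymptotic frequency $\mu([0^{n_k}])>0$ in $x_{[0,\infty)}$, so it occurs there infinitely often; and $x_{[0,\infty)}$ is not eventually constant equal to $0^{\infty}$ (else $\sigma^i x\to 0^{\infty}$ and $\nu_N(x)\to\delta_{0^{\infty}}$, contradicting $\mu\neq\delta_{0^{\infty}}$). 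Hence $x_{[0,\infty)}$ contains infinitely many maximal \emph{finite} runs of at least $n_k$ consecutive $0$'s, and cutting at these runs produces the displayed form, with each $w_i^{(k)}$ containing no block $0^{n_k}$ and, for $i\geq 1$, beginning and ending with $1$ by maximality of the neighbouring runs (and likewise for $w_0^{(k)}$ after replacing $x$ by $\sigma^m x$ for the least $m$ with $x_m=1$, which affects neither genericity nor the truth of $|w_0^{(k)}|/n_k\to\infty$).

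For the ``moreover'' clause I would suppose, for contradiction, that $|w_0^{(k)}|/n_k\not\to\infty$: then there are $\delta>0$ and an infinite index set $K$ with $|w_0^{(k)}|\leq n_k/\delta$ for $k\in K$. For such $k$, the run of $0$'s directly following $w_0^{(k)}$ has length at least $n_k$, so the block $0^{n_k}$ occupies the positions $\bigl[\,|w_0^{(k)}|,\ |w_0^{(k)}|+n_k\,\bigr)$, which lie inside $[0,N_k)$ for $N_k:=\lceil(1+1/\delta)n_k\rceil$. Consequently, for any fixed $j$ and any $k\in K$ with $n_k>j$, the window $[0,N_k)$ contains at least $n_k-j+1$ occurrences of $0^{j}$, so $\nu_{N_k}(x)([0^{j}])\geq (n_k-j+1)/N_k$, and letting $k\to\infty$ in $K$ yields $\liminf_{k\in K}\nu_{N_k}(x)([0^{j}])\geq \delta/(\delta+1)>0$. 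But $N_k\to\infty$ and $[0^{j}]$ is clopen, so genericity of $x$ forces $\nu_{N_k}(x)([0^{j}])\to\mu([0^{j}])$; hence $\mu([0^{j}])\geq\delta/(\delta+1)$ for every $j$, contradicting $\mu([0^{j}])\downarrow 0$. This contradiction gives $|w_0^{(k)}|/n_k\to\infty$.

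The step I expect to be the genuine obstacle — rather than routine estimation — is precisely the interaction of scales in the last paragraph: genericity only controls Cesàro averages over windows $[0,N)$ with $N\to\infty$, whereas $|w_0^{(k)}|$ is a mesoscopic quantity of size $\asymp n_k$, comparable to the length of the very word $0^{n_k}$ being tracked, so one cannot directly ``read off'' the frequency of $0^{n_k}$ from a window of length $O(n_k)$. The device that resolves this is to fix a short word $0^{j}$ whose length does not grow with $k$ and to observe that a single run of at least $n_k$ zeros inside a window of length $O(n_k)$ already pins the frequency of $0^{j}$ in that window above a positive constant — and that lower bound \emph{is} seen by genericity in the limit $k\to\infty$. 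I note finally that this argument uses the complexity hypothesis $\limsup_n c_X(n)/n<3$ and the transitivity and recurrence of $X$ only through the reduction that has made $\{0^{\infty}\}$ the unique minimal subsystem of $X$; the only property of $\mu$ used beyond invariance is $\mu([0^{j}])\to 0$, which is where ergodicity enters.
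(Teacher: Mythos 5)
Your proof is correct, but the argument for the ``moreover'' clause is genuinely different from the paper's. The paper works with the single cylinder $[0]$: writing $r_k = |w_0^{(k)}|$ and $z_k$ for the number of zeros in $w_0^{(k)}$, genericity applied along the two time subsequences $N = r_k$ and $N = r_k + n_k$ gives $z_k/r_k \to \mu([0])$ and $(z_k+n_k)/(r_k+n_k) \to \mu([0])$; subtracting and using $\mu([0]) - 1 \neq 0$ forces $n_k/(n_k+r_k) \to 0$, hence $r_k/n_k \to \infty$. You instead fix a short word $0^j$, observe that a full run $0^{n_k}$ sitting inside a window of length $O(n_k)$ pins the empirical frequency of $0^j$ in that window above the constant $\delta/(\delta+1)$, and contradict $\mu([0^j]) \downarrow 0$. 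Both arguments are sound, and your diagnosis of the scale mismatch (genericity controls windows $[0,N)$ while $|w_0^{(k)}|$ lives at scale $n_k$) is exactly the issue the paper's two-window comparison also resolves. The trade-off: the paper's route needs only shift-invariance plus $\mu \neq \delta_{0^{\infty}}$ (to get $\mu([0]) < 1$), whereas yours additionally needs $\mu(\{0^{\infty}\}) = 0$, i.e.\ $\mu([0^j]) \downarrow 0$, which is where you invoke ergodicity --- available in context, but a slightly stronger input, and your argument would break for a non-ergodic invariant $\mu$ charging $0^{\infty}$. On the other hand, you treat the first assertion (the decomposition, and in particular normalizing so that $w_0^{(k)}$ begins with $1$) more carefully than the paper, which dispatches it in two sentences via the omega-limit set.
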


\begin{proof}

We first note that since $x$ is generic for $\mu \neq \delta_{0^\infty}$, $x_{[0,\infty)}$ contains infinitely many $1$s. Moreover, since $0^{\infty}$ is the only minimal subsystem of $X$, the omega-limit set of $x$ must contain $0^{\infty}$, i.e. $x_{[0,\infty)}$ contains $0^n$ for arbitrarily large $n$. Therefore, $x_{[0,\infty)}$ has the claimed form, and it remains only to show that $\frac{|w_0^{(k)}|}{n_k} \rightarrow \infty$.

By genericity, \begin{eqnarray*}
\mu([0]) & = & \int \chi_{[0]} \, d\mu \\ 
& = & \lim_{N \rightarrow \infty} \frac{1}{N} \sum_{i = 0}^{N-1} \chi_{[0]}(\sigma^i(x)) \\
& = &\lim_{N \rightarrow \infty} \frac{\#\mbox{ zeros in }x_0 \cdots x_{N-1}}{N}. 
\end{eqnarray*}

Observe that $\mu([0]) < 1$ since we are assuming $\mu \neq \delta_{0^\infty}$.  It follows that $|w_0^{(k)}| \rightarrow \infty$.  Therefore, if we let $r_k = |w_0^{(k)}|$ and $z_k$ denote the number of zeros in $w_0^{(k)}$, then \[\lim_{k \rightarrow \infty} \frac{z_k}{r_k} = \mu([0]) \; \mbox{ and } \lim_{k \rightarrow \infty} \frac{z_k + n_k}{r_k + n_k} = \mu([0]).\]  Through some algebraic manipulation, \[
0  =  \lim_{k \rightarrow \infty} \left( \frac{z_k}{r_k} - \frac{z_k + n_k}{r_k + n_k} \right)  =  \lim_{k \rightarrow \infty} \left( \left(\frac{n_k}{n_k + r_k}\right) \left( \frac{z_k - r_k}{r_k} \right) \right).\]
 Now, \[\lim_{k \rightarrow \infty} \left( \frac{z_k - r_k}{r_k} \right) = \lim_{k \rightarrow \infty}\left( \frac{z_k}{r_k} - 1 \right) = \mu([0]) - 1 \neq 0,\] so $\ds \lim_{k \rightarrow \infty}\left( \frac{n_k}{n_k + r_k}\right) = 0$, which implies that $\ds \frac{r_k}{n_k} \rightarrow \infty$.
\end{proof}

\begin{lemma} \label{rtspecial}
If $x$ is a sequence which is not eventually periodic to the right, $n \in \mathbb{N}$, and $w$ is a subword of $x$ with length at least $c_X(n) + n$, then $w$ contains an $n$-letter right-special word.
\end{lemma}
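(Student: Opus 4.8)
The plan is to argue by contradiction via a counting/pigeonhole argument on subwords of $x$. Suppose $w$ is a subword of $x$ of length at least $c_X(n) + n$ that contains no $n$-letter right-special word. Consider the $n$-letter subwords of $w$ read from left to right: if $w = w_1 w_2 \cdots w_m$ with $m \geq c_X(n) + n$, then the windows $w_j \cdots w_{j+n-1}$ for $j = 1, \ldots, m-n+1$ give a sequence of at least $c_X(n) + 1$ consecutive length-$n$ words in $\mathcal{L}_n(X)$, each obtained from the previous by deleting the leftmost symbol and appending one symbol on the right.

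The key observation is that if none of these length-$n$ windows is right-special, then each window determines its successor \emph{uniquely}: given $u = w_j \cdots w_{j+n-1}$, since $u$ is not right-special, there is exactly one symbol $a$ with $ua \in \mathcal{L}(X)$, so the next window $w_{j+1}\cdots w_{j+n}$ is the unique length-$n$ suffix of $ua$. Hence the sequence of windows is an orbit segment under a well-defined ``next-word'' map on (a subset of) $\mathcal{L}_n(X)$. Since there are only $c_X(n) = |\mathcal{L}_n(X)|$ words of length $n$, among the first $c_X(n)+1$ windows two must coincide, say window $j$ equals window $j'$ with $j < j'$. Because each window deterministically forces the next, the sequence of windows is then periodic with period $j' - j$ from position $j$ onward, which forces $x$ to be eventually periodic to the right (the period of $x$ being $j' - j$), contradicting the hypothesis.

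The one point requiring a little care is making precise that ``two equal windows force eventual periodicity of $x$,'' since a priori the windows are only determined going forward within the finite word $w$; but the determinism argument shows that from the first repetition onward, the entire right-infinite tail $x_{[j, \infty)}$ is forced and coincides with a shift of itself by $j' - j$, giving genuine eventual periodicity of $x$ to the right. I expect this step --- correctly bookkeeping indices so that $c_X(n) + n$ letters yield $c_X(n) + 1$ windows and hence a repetition among $c_X(n)$ possible words --- to be the main (though entirely routine) obstacle; the conceptual content is just the pigeonhole principle applied to the deterministic forward map on non-right-special words.
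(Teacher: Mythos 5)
Your proof is correct and is essentially the same as the paper's: both find a repeated length-$n$ window among the at least $c_X(n)+1$ windows of $w$ by pigeonhole, and then use the fact that non-right-special words have a unique continuation to force $x$ to be eventually periodic to the right, a contradiction. No issues.
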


\begin{proof} 
Assume that $x$ and $w$ are as in the lemma. By assumption, $w$ contains more than $c_X(n)$ subwords of length $n$, and therefore one is repeated, call it $u$. 
If no $n$-letter subword of $w$ is right-special, then for every $n$-letter subword of $w$, there is only one choice of a letter which may follow it in $x$. However, this would mean that the portion of $x$ between the two occurrences of $u$ would have to repeat indefinitely to the right, a contradiction to the assumption that $x$ is not eventually periodic to the right. Therefore, $w$ contains an $n$-letter right-special word.
\end{proof}

\begin{lemma} \label{noones}
There does not exist a strictly increasing sequence $(n_k)$ such that $\#RS(n_k) = 1$ for all $k$.\end{lemma}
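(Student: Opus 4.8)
The plan is to contradict the standing hypothesis $\limsup_{n\to\infty} c_X(n)/n < 3$, which survives the reductions made so far in the proof of Theorem~\ref{measuresupintro}. Suppose, for contradiction, that there is a strictly increasing $(n_k)$ with $\#RS(n_k)=1$ for every $k$. I would first identify the unique right-special word of length $n_k$: since $0^\infty\in X$, the word $0^{n_k}$ lies in $\mathcal{L}(X)$ and can be followed by $0$; moreover, applying Lemma~\ref{genericstructure} to a generic point $x$ for $\mu$ together with the sequence $(n_k)$, the tail $x_{[0,\infty)}$ contains a block $0^{\geq n_k}$ directly followed by some $w_i^{(k)}$ beginning with $1$, so $0^{n_k}1\in\mathcal{L}(X)$ as well. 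Hence $0^{n_k}$ is right-special, and since it is the only right-special word of that length, $RS(n_k)=\{0^{n_k}\}$.

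Before invoking Lemma~\ref{rtspecial} I need that $x$ is not eventually periodic to the right. This holds because $\mu\neq\delta_{0^\infty}$ forces $\mu([1])>0$, hence by genericity $x_{[0,\infty)}$ has positive density of $1$'s; but if $x$ were eventually periodic to the right, then $\omega(x)$ would contain a finite minimal subsystem, which must be $\{0^\infty\}$ since that is the unique minimal subsystem of $X$, making $x$ eventually equal to $0^\infty$ on the right and contradicting the positive density of $1$'s.

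The heart of the argument is a length estimate on the prefix block $w_0^{(k)}$ produced by Lemma~\ref{genericstructure}. On one hand, that lemma guarantees $w_0^{(k)}$ contains no copy of $0^{n_k}$, so by the previous paragraph $w_0^{(k)}$ contains no $n_k$-letter right-special word at all; feeding $w=w_0^{(k)}$ and $n=n_k$ into the contrapositive of Lemma~\ref{rtspecial} yields $|w_0^{(k)}|<c_X(n_k)+n_k$. On the other hand, the ``moreover'' clause of Lemma~\ref{genericstructure} gives $|w_0^{(k)}|/n_k\to\infty$. Combining, $c_X(n_k)/n_k\geq |w_0^{(k)}|/n_k-1\to\infty$, contradicting $\limsup_{n\to\infty}c_X(n)/n<3$, and this contradiction proves the lemma.

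I expect the only genuine idea to be the decision to apply Lemma~\ref{rtspecial} to the single long block $w_0^{(k)}$ rather than trying to count right-special words directly: the bound $|w_0^{(k)}|<c_X(n_k)+n_k$ it produces is exactly what collides with the genericity estimate $|w_0^{(k)}|/n_k\to\infty$. The only other point requiring care is checking that $x$ is not eventually periodic to the right so that Lemma~\ref{rtspecial} applies; everything else is routine bookkeeping with hypotheses already in force.
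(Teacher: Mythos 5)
Your proof is correct and follows essentially the same route as the paper: identify $0^{n_k}$ as the unique right-special word of length $n_k$, check $x$ is not eventually periodic to the right, apply the contrapositive of Lemma~\ref{rtspecial} to $w_0^{(k)}$ to get $|w_0^{(k)}| < c_X(n_k) + n_k$, and collide this with $|w_0^{(k)}|/n_k \to \infty$ from Lemma~\ref{genericstructure}. The only cosmetic difference is that the paper routes the final contradiction through a uniform bound $c_X(n) < Kn$, whereas you contradict $\limsup c_X(n)/n < 3$ directly; both work.
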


\begin{proof} 
If such a sequence $(n_k$) exists, then for every $k$, the only right-special word of length $n_k$ is $0^{n_k}$.  Let $x$ be generic for $\mu$. We note that $x$ cannot be right-asymptotic to a periodic point: if that periodic point were $0^\infty$, then by genericity $\mu = \delta_{0^\infty}$; and if it were not $0^\infty$, then $X$ would contain a minimal subsystem other than $\{0^\infty\}$.  

Consider $w_0^{(k)}$ as defined in Lemma \ref{genericstructure}. 
We note that $w_0^{(k)}$ cannot contain a right-special word of length $n_k$, since by definition it would not be $0^{n_k}$, a contradiction. 
Therefore, by Lemma~\ref{rtspecial}, $|w_0^{(k)}| < c_X(n_k) + n_k$.  

Next we observe that there exists $K \in \n$ such that $c_X(n) < Kn$ for all $n$.  To see this, note that, since $\limsup_{n \to \infty} (\frac{c_X(n)}{n}) < 3$, there exists $N \in \n$ such that $c_X(n) < 3n$ for all $n \geq N$.  And for $1 \leq n \leq N-1$, we can simply let $K(n) \in \n$ be any value such that $K(n) > c_X(n)$.  Then just define \[K := \max\{3, K(1), \ldots, K(N-1)\}.\]  

Then $|w_0^{(k)}| < c_X(n_k) + n_k < (K+1)n_k$ for all $k$, which contradicts the conclusion from Lemma \ref{genericstructure} that $\frac{|w_0^{(k)}|}{n_k} \rightarrow \infty$.

\end{proof}

Following \cite{B}, we define a strictly increasing sequence $(n_k)$ to be {\em logarithmically syndetic} if there exists $M$ such that $\ds \frac{n_{k+1}}{n_k} < M$ for all $k$.

\begin{lemma} \label{loglemma}
There is a logarithmically syndetic sequence $(n_k)$ such that $\#RS(n_k) \leq 2$ for all $k$.  Moreover, if $RS(n_k) = \{0^{n_k}, b_k\}$ for some $k$ and $b_k \neq 0^{n_k}$, then $b_k$ cannot be followed by three or more symbols.
\end{lemma}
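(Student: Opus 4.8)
The plan is to establish the two claims of Lemma~\ref{loglemma} in turn, using Lemma~\ref{noones} as the key input for the first and a short counting argument for the second.

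\medskip

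\textbf{Existence of a logarithmically syndetic sequence with $\#RS(n_k) \leq 2$.}  First I would observe that $\#RS(n) \geq 1$ for all $n$, since $0^n$ is always right-special (because $0^\infty \in X$ is the unique minimal subsystem and $X \neq \{0^\infty\}$, so $0^n$ is followed both by $0$ and by $1$ somewhere in $X$). Next, suppose for contradiction that no logarithmically syndetic sequence with $\#RS(n_k) \leq 2$ exists. Then for every $M$ there are only finitely many ``gaps'' of ratio $<M$ among the set $G = \{n : \#RS(n) \leq 2\}$; equivalently, if $G$ is infinite it is not logarithmically syndetic, and if $G$ is finite then $\#RS(n) \geq 3$ for all large $n$. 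In either case, I want to combine this with the $\limsup$ hypothesis. The right tool is the same averaging idea Boshernitzan uses: by Lemma~\ref{countinglemma}, $c_X(n) \geq c_X(1) + \sum_{\ell=1}^{n-1}\#RS(\ell)$, so if $\#RS(\ell) \geq 3$ for all $\ell$ outside a logarithmically sparse set, the partial sums of $\#RS$ grow like $3n$ plus a term that, along a suitable subsequence, is not $o(n)$ — contradicting $\limsup c_X(n)/n < 3$. More carefully: if $G$ is not logarithmically syndetic, pick $M$ large and a long interval $[n_k, Mn_k]$ containing no element of $G$; then $c_X(Mn_k) \geq c_X(n_k) + 3(Mn_k - n_k) = 3Mn_k - 3n_k + c_X(n_k) \geq 3Mn_k - 3n_k$, which forces $c_X(Mn_k)/(Mn_k) \geq 3 - 3/M$; since $M$ is arbitrary this contradicts $\limsup c_X(n)/n < 3$ once one also rules out $\#RS(n)=1$ or the pathology being confined to a sparse set — and Lemma~\ref{noones} is exactly what prevents $\#RS(n_k)=1$ from occurring along an infinite subsequence, which would otherwise have let the count dip below $3$ on a large set. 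I would assemble these pieces to conclude $G$ contains a logarithmically syndetic sequence.

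\medskip

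\textbf{The ``three or more symbols'' restriction.}  Now fix $k$ with $RS(n_k) = \{0^{n_k}, b_k\}$, $b_k \neq 0^{n_k}$. The point is that every word $w$ of length $n_k + 1$ has all its length-$n_k$ subwords (there are two of them, the prefix and the suffix) right-special unless it has a unique one-letter extension; more to the point, the number of length-$(n_k+1)$ words is, by Lemma~\ref{countinglemma}, $c_X(n_k+1) \geq c_X(n_k) + \#RS(n_k) = c_X(n_k) + 2$, and it can exceed $c_X(n_k)+2$ only if some right-special word of length $n_k$ has three or more one-letter extensions. But a cleaner route: if $b_k$ could be followed by three distinct symbols, then $\#RS(n_k)=2$ would contribute at least $1 + 2 = 3$ (from $0^{n_k}$, which has $\geq 2$ extensions) $+ \, 3$ (from $b_k$) $- \#RS(n_k) $... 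I would instead argue directly that a word over a two-letter alphabet can be followed by at most two symbols, and then note the factor-map reduction earlier in the proof of Theorem~\ref{measuresupintro} arranged $X \subseteq \{0,1\}^{\z}$ — so over this binary alphabet the phrase ``three or more symbols'' is automatically impossible, and the statement is really asserting that in the pre-reduction setting $b_k$'s continuations, pulled back, still cannot be three; the honest version is that after the reduction the claim is vacuous/immediate, so I would state it as: since $X \subseteq \{0,1\}^{\z}$, any word is followed by at most two symbols, hence in particular $b_k$ cannot be followed by three or more.

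\medskip

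\textbf{Main obstacle.}  The delicate part is the first claim: converting ``$\limsup c_X(n)/n < 3$'' plus ``$\#RS(n) \neq 1$ infinitely often is impossible (Lemma~\ref{noones})'' into the \emph{logarithmically syndetic} conclusion rather than merely an infinite sequence. Getting an infinite sequence with $\#RS(n_k) \leq 2$ is easy (otherwise $\#RS(n) \geq 3$ eventually and the telescoping sum forces $c_X(n)/n \to 3$, contradiction); upgrading to bounded ratios requires the averaging/interval argument above, showing that long stretches with $\#RS \geq 3$ cannot occur, and this is where I expect to need the most care in matching Boshernitzan's technique to the present (transitive, non-minimal) setting. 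I would model this step closely on the corresponding argument in \cite{B}.
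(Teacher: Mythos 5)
Your main argument is essentially the paper's: set $\varepsilon = 3 - \limsup_{n\to\infty} c_X(n)/n$, and show that an interval $[i, Mi]$ (with $M$ a \emph{fixed} constant depending on $\varepsilon$; the paper takes $M = 6/\varepsilon$) on which every increment of the complexity is at least $3$ would force $c_X(Mi)/(Mi) \geq 3 - 3/M > 3 - \varepsilon/2$, contradicting the limsup hypothesis for $i \geq N$. This is exactly the paper's proof, and the resulting sequence of "good" lengths is logarithmically syndetic with ratio controlled by $M$. Two remarks. First, your detour through Lemma~\ref{noones} is unnecessary and somewhat confused: the lemma only asserts $\#RS(n_k) \leq 2$, so values of $n$ with $\#RS(n) = 1$ are perfectly acceptable members of your set $G$ and there is nothing to rule out here (the paper invokes Lemma~\ref{noones} only \emph{after} this lemma, to upgrade $\leq 2$ to $=2$ along the sequence). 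Second, for the "moreover" clause the paper runs the interval argument on the increments, locating $n$ with $c_X(n+1) - c_X(n) \leq 2$ rather than merely $\#RS(n) \leq 2$; both conclusions then fall out at once, since if $0^{n_k}$ has two followers and $b_k$ had three, the increment would be at least $1 + 2 = 3$. Your alternative — that the earlier reduction puts $X \subseteq \{0,1\}^{\mathbb{Z}}$, so no word can have three followers — is legitimate in context and makes the clause immediate; note that in the binary setting the two formulations coincide anyway, because there $\#RS(n) = c_X(n+1) - c_X(n)$.
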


\begin{proof}
Let  $\varepsilon = 3 - \limsup_{n \to \infty} (\frac{c_X(n)}{n})$.  Then there exists $N$ such that $\frac{c_X(n)}{n} < 3 - \varepsilon/2$ for all $n \geq N$.  We claim that any interval of the form $ \left[ i, \left(\frac{6}{\varepsilon} \right)\cdot i \right]$ where $i \geq N$ must contain a value of $n$ such that $c(n+1) - c(n) \leq 2$.  The statement of Lemma \ref{loglemma} then follows immediately by Lemma~\ref{countinglemma}.  

To prove the claim, let $i \geq N$ and suppose $c_X(n+1) - c_X(n) \geq 3$ for all $n \in \left[ i, j \right]$, where $j = \left(\frac{6}{\varepsilon} \right)\cdot i$.  Then \begin{eqnarray*}
c_X(j) & \geq & c_X(i) + 3(j - i)\\
& > & 3(j - i) \\
& = & j( 3 - \varepsilon/2).
\end{eqnarray*} But $\frac{c_X(j)}{j} < 3 - \varepsilon/2$ since $j \geq N$.
\end{proof}

Let $(n_k)$ be a fixed logarithmically syndetic sequence as given by Lemma \ref{loglemma}, and let $M>1$ be a constant such that $\frac{n_{k+1}}{n_k} < M$.  By Lemma \ref{noones}, by deleting a finite number of terms in $(n_k)$ if necessary, we can assume that $\#RS(n_k) = 2$ for all $k$.  Let $b_k$ denote the only right-special word of length $n_k$ other than $0^{n_k}$. By passing to a subsequence and increasing $M$ to $10M$ if necessary, we can assume that \begin{equation} \label{second8inequality} 10 < \frac{n_{k+1}}{n_k} < M \;  \;\;\mbox{ for all $k$.} \end{equation} 

Since $\frac{|w_0^{(k)}|}{n_k} \rightarrow \infty$ and $\limsup_{k \to \infty}  \frac{c_X(n_k)}{n_k} \leq \limsup_{n \to \infty} \frac{c_X(n)}{n} < 3$, we see that 
$|w_0^{(k)}| > c_X(n_k) + n_k$ for sufficiently large $k$. By Lemma~\ref{rtspecial}, $w_0^{(k)}$ contains a right-special word of length $n_k$, which cannot be $0^{n_k}$, so $w_0^{(k)}$ contains at least one occurrence of $b_k$.

Define $a_k$ to be the shortest (possibly empty) word such that \begin{equation} \label{akdefinition} w_0^{(k)} = a_kb_kr_k\end{equation} for some word $r_k$, and define $d_k$ to be the shortest word such that \begin{equation} \label{dkdefinition} w_0^{(k)} = \ell_kb_kd_k\end{equation} for some word $\ell_k$.   Therefore we can write \[w_0^{(k)} = a_kb_ku_kd_k\] for some (possibly empty) word $u_k$. Observe that $|a_k|, |d_k| < c_X(n_k) + n_k < 4n_k$ for all sufficiently large $k$, since otherwise, by Lemma~\ref{rtspecial}, $a_k$ or $d_k$ would contain $b_k$, contradicting minimality of length in their definitions.  By definition, $b_k$ is a suffix of $a_kb_ku_k$, but since $\frac{|w_0^{(k)}|}{n_k} \rightarrow \infty$, for sufficiently large $k$ we can assume that \begin{eqnarray*}
|w_0^{(k)}| & > & 13n_k \\
& = & 4n_k + n_k + 4n_k + 4n_k \\
& > & |a_k| + |b_k| + |d_k| + c_X(n_k) + n_k,
\end{eqnarray*}
which implies that $|u_k| > c_X(n_k)  + n_k$. Therefore, by Lemma~\ref{rtspecial}, $u_k$ contains at least one copy of $b_k$.

\begin{remark} \label{bkcopies} \rm
Let $p> 0$ be an arbitrary positive integer. As above, for sufficiently large $k$ we have  
\begin{eqnarray*}
	|w_0^{(k)}| & > & (13+4p)n_k \\
	& > & |a_k| + |b_k| + |d_k| + p(c_X(n_k) + n_k).
\end{eqnarray*}
Applying Lemma~\ref{rtspecial}, we see that for sufficiently large $k$, $u_k$ contains at least $p$ copies of $b_k$.
\end{remark} 

Observe that $b_k$ is a suffix of $a_kb_ku_k$, and $b_k$ is the only word in $RS(n_k)$ that appears in $w_0^{(k)}$.  Also, $a_k$ and $d_k$ are the {\em shortest} words that satisfy (\ref{akdefinition}) and (\ref{dkdefinition}), and (by Lemma \ref{loglemma}), $b_k$ can only be followed by two distinct symbols.  Therefore $w_0^{(k)}$ must have the form \begin{equation} \label{w0form} w_0^{(k)} = a_kb_k(e_kb_k)^{m_k}d_k,\end{equation} where $e_k$ is the shortest (possibly empty) word such that $w_0^{(k)} = a_kb_ke_kb_kq_k$ for some word $q_k$.  

In the form (\ref{w0form}), observe (by Lemma~\ref{rtspecial}) that \begin{equation} \label{ekinequality} |e_k| < c_X(n_k) + n_k < 4n_k \end{equation} for all sufficiently large $k$ since $e_k$ does not contain any right-special word of length $n_k$.   Also, $m_k \rightarrow \infty$ by Remark \ref{bkcopies}.  Therefore, by deleting a finite number of terms from the beginning of $(n_k)$ if necessary, we can assume that \begin{equation} \label{first8inequality} m_k > M^2 + 1 \;\;\; \mbox{ for all $k\geq 1$.}\end{equation} 

Observe that the first symbols of $e_k$ and $d_k$ must be different: if they were the same, then, since $b_k$ is the only word in $RS(n_k)$ that appears in $w_0^{(k)}$, we would have $e_k = d_k$ and $x_{[0, \infty)} = a_kb_k(e_kb_k)^\infty$.  As noted, for example, in the proof of Lemma \ref{noones}, this cannot happen because $x$ cannot be eventually periodic to the right.  

Now observe that all suffixes of $(e_kb_k)^{m_k-1}$ of length at least $n_k$ are right-special: this is due to the fact that any such suffix can be followed by either $e_k$ or $d_k$, and, as we just noted, the first symbols of $e_k$ and $d_k$ must be different.   By construction, none of these suffixes have the form $0^n$.  Therefore $\#RS(n) \geq 2$ for each $n \in (n_k, (m_k - 1) \cdot |e_kb_k|]$.

\begin{lemma} \label{firstbklemma}
For all $k$, both $b_{k+1}$ and $b_{k+2}$ are suffixes of $(e_kb_k)^{m_k-1}$.
\end{lemma}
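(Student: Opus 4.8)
The plan is to read off $b_{k+1}$ and $b_{k+2}$ directly as suffixes of the periodic block $(e_kb_k)^{m_k-1}$. Two facts established immediately before the lemma do almost all the work: first, that $(e_kb_k)^{m_k-1}$ is a subword of $w_0^{(k)}$, hence of the generic point $x$, all of whose suffixes of length at least $n_k$ are right-special in $X$ and none of which has the form $0^n$; and second, the standing assumption that $\#RS(n_\ell) = 2$, so that $RS(n_\ell) = \{0^{n_\ell}, b_\ell\}$, for every $\ell$. The only thing I need to check is that $(e_kb_k)^{m_k-1}$ is long enough to actually possess suffixes of the two later lengths $n_{k+1}$ and $n_{k+2}$.

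For this I would first bound $L := \bigl|(e_kb_k)^{m_k-1}\bigr| = (m_k-1)\,(|e_k|+n_k) \ge (m_k-1)\,n_k$, and use $m_k > M^2+1$ from (\ref{first8inequality}) to get $L > M^2 n_k$. On the other hand, applying $n_{\ell+1} < M n_\ell$ from (\ref{second8inequality}) twice gives $n_{k+1} < M n_k$ and $n_{k+2} < M n_{k+1} < M^2 n_k$. Since $(n_\ell)$ is strictly increasing, this yields $n_k < n_{k+1} < n_{k+2} < M^2 n_k < L$, so both $n_{k+1}$ and $n_{k+2}$ lie strictly between $n_k$ and $L$; in particular $(e_kb_k)^{m_k-1}$ genuinely has a suffix of each of these lengths.

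Then, for $\ell \in \{k+1,k+2\}$, I would let $v_\ell$ denote the suffix of $(e_kb_k)^{m_k-1}$ of length $n_\ell$. Since $n_k \le n_\ell \le L$, the word $v_\ell$ is one of the right-special, non-constant suffixes identified before the lemma, so $v_\ell \in RS(n_\ell)$ and $v_\ell \ne 0^{n_\ell}$; because $RS(n_\ell) = \{0^{n_\ell}, b_\ell\}$, this forces $v_\ell = b_\ell$. Hence $b_{k+1}$ and $b_{k+2}$ are both suffixes of $(e_kb_k)^{m_k-1}$, which is the claim.

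I expect no real obstacle: the lemma is a bookkeeping consequence of the structure already derived for $w_0^{(k)}$, and the only substantive point is the length comparison, where the hypothesis $m_k > M^2+1$ is used precisely so that the two consecutive later scales $n_{k+1}$ and $n_{k+2}$ both still fit inside the periodic stretch $(e_kb_k)^{m_k-1}$.
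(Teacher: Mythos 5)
Your proof is correct and follows essentially the same route as the paper: both arguments reduce the lemma to the length comparison $n_{k+2} < M^2 n_k < (m_k-1)|e_kb_k|$ via (\ref{second8inequality}) and (\ref{first8inequality}), and then identify the suffix of $(e_kb_k)^{m_k-1}$ of length $n_\ell$ with $b_\ell$ using $\#RS(n_\ell)=2$ and the fact that these suffixes are right-special and not constant. The only cosmetic difference is that you treat $n_{k+1}$ and $n_{k+2}$ simultaneously rather than in two separate displayed chains of inequalities.
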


\begin{proof}
Using (\ref{second8inequality}) and (\ref{first8inequality}), observe that \begin{eqnarray*}
|b_{k+1}| & = & n_{k+1} \\ & < & M \cdot n_k \\ & < & (m_k-1) \cdot n_k \\ & \leq & (m_k - 1) \cdot |e_kb_k|.
\end{eqnarray*}  Therefore $|b_{k+1}| \in (n_k, (m_k-1) \cdot |e_kb_k|)$.  And $b_{k+1}$ is the only right-special word of length $n_{k+1}$ other than $0^{n_{k+1}}$, which implies that $b_{k+1}$ is a suffix of $(e_kb_k)^{m_k-1}$.  

Next, observe that by (\ref{second8inequality}) and (\ref{first8inequality}), \begin{eqnarray*}
|b_{k+2}| & = & n_{k+2} \\ & < & M \cdot n_{k+1} \\ & < & M^2 \cdot n_k \\ & < & (m_k-1) \cdot n_k \\ & \leq & (m_k - 1) \cdot |e_kb_k|.
\end{eqnarray*}  Therefore $b_{k+2}$ is also a suffix of $(e_kb_k)^{m_k - 1}$.
\end{proof}

\begin{lemma} \label{secondbklemma}
For all $k$, $e_kb_ke_kb_k$ is a suffix of $b_{k+1}$.
\end{lemma}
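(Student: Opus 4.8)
The plan is to derive this immediately from the structural form \eqref{w0form}, Lemma~\ref{firstbklemma}, and the length estimates \eqref{ekinequality}, \eqref{second8inequality}, \eqref{first8inequality} already in hand. The one conceptual ingredient is the elementary fact that if two words are both suffixes of a common word, then the shorter of the two is a suffix of the longer. So the whole argument amounts to exhibiting $e_kb_ke_kb_k$ and $b_{k+1}$ as suffixes of a single word, and then checking that $e_kb_ke_kb_k$ is the shorter one.

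Concretely, I would proceed as follows. First, by \eqref{first8inequality} we have $m_k > M^2+1 \geq 2$ (using $M>1$), so $m_k - 1 \geq 2$ and hence $e_kb_ke_kb_k$ is a suffix of $(e_kb_k)^{m_k-1}$. Second, Lemma~\ref{firstbklemma} says that $b_{k+1}$ is also a suffix of $(e_kb_k)^{m_k-1}$. Third, I compare lengths: by \eqref{ekinequality}, $|e_k| < 4n_k$ for all sufficiently large $k$, so
\[
|e_kb_ke_kb_k| = 2|e_k| + 2n_k < 10n_k,
\]
while by \eqref{second8inequality}, $|b_{k+1}| = n_{k+1} > 10n_k$; thus $|e_kb_ke_kb_k| < |b_{k+1}|$. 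Applying the common-suffix principle to the two suffixes $e_kb_ke_kb_k$ and $b_{k+1}$ of $(e_kb_k)^{m_k-1}$, the shorter one, $e_kb_ke_kb_k$, is a suffix of $b_{k+1}$. Finally, since the estimate \eqref{ekinequality} holds only for sufficiently large $k$, one deletes finitely many initial terms of $(n_k)$ so that the conclusion holds for all $k$, exactly as in the earlier reductions of this section.

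I do not expect any real obstacle here: the argument is a short combinatorial computation rather than anything delicate. The only point requiring attention is bookkeeping — confirming that all three numbered inequalities are in force (each "for sufficiently large $k$", hence for all $k$ after re-indexing), and noticing that the constant $10$ in \eqref{second8inequality} was chosen precisely so that the length comparison $2|e_k|+2n_k < 8n_k + 2n_k = 10n_k < n_{k+1}$ goes through with room to spare.
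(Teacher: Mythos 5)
Your proof is correct and follows essentially the same route as the paper: both arguments reduce the claim to the fact that $b_{k+1}$ and $e_kb_ke_kb_k$ are suffixes of the common word $(e_kb_k)^{m_k-1}$ and then compare lengths via $|e_kb_ke_kb_k| = 2|e_k| + 2n_k < 10n_k < n_{k+1}$ using (\ref{ekinequality}) and (\ref{second8inequality}). Your version merely makes explicit the common-suffix principle and the check $m_k - 1 \geq 2$ that the paper leaves implicit.
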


\begin{proof}
By Lemma \ref{firstbklemma}, $b_{k+1}$ is a suffix of $(e_kb_k)^{m_k-1}$, so it is enough to show that $|b_{k+1}| \geq |e_kb_ke_kb_k|$.  To see this, observe that, using (\ref{second8inequality}) and (\ref{ekinequality}), \begin{eqnarray*}
|b_{k+1}| & = & n_{k+1} \\ & > & 10n_k \\ & = & 4n_k + n_k + 4n_k + n_k \\ & > & |e_k| + |b_k| + |e_k| + |b_k| \\ & = & |e_kb_ke_kb_k|. 
\end{eqnarray*}
\end{proof}

\begin{proposition} \label{ekbkprop}
For all $k$, $e_{k+1}b_{k+1}e_{k+1}b_{k+1}$ is a suffix of $(e_kb_k)^{m_k - 1}$.
\end{proposition}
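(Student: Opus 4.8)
The plan is to obtain this as an immediate consequence of the two preceding lemmas, using only the elementary fact that ``is a suffix of'' is a transitive relation on finite words. First I would apply Lemma~\ref{secondbklemma} with $k$ replaced by $k+1$: this gives that $e_{k+1}b_{k+1}e_{k+1}b_{k+1}$ is a suffix of $b_{k+2}$. The inequalities invoked in the proof of Lemma~\ref{secondbklemma}, namely (\ref{second8inequality}) and (\ref{ekinequality}), hold at every index after the finitely many deletions from $(n_k)$ performed earlier, so the lemma is genuinely available at index $k+1$.

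Next I would apply Lemma~\ref{firstbklemma}, whose second assertion is exactly that $b_{k+2}$ is a suffix of $(e_kb_k)^{m_k-1}$. Chaining the two facts — $e_{k+1}b_{k+1}e_{k+1}b_{k+1}$ is a suffix of $b_{k+2}$, and $b_{k+2}$ is a suffix of $(e_kb_k)^{m_k-1}$ — yields that $e_{k+1}b_{k+1}e_{k+1}b_{k+1}$ is a suffix of $(e_kb_k)^{m_k-1}$, which is the claim.

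I do not expect any real obstacle here: the substantive work has already been front-loaded into Lemmas~\ref{firstbklemma} and~\ref{secondbklemma}, which themselves rest on the length bookkeeping in (\ref{second8inequality})--(\ref{first8inequality}) and the bound $|e_k|<4n_k$. The reason to record Proposition~\ref{ekbkprop} as a separate statement is that it packages the self-similar nesting of the right-special words $b_k$: two copies of the block $e_{k+1}b_{k+1}$ appear at the end of a long power of the strictly shorter block $e_kb_k$. This is precisely the combinatorial recursion one iterates down the scale of the $n_k$, and it is what will ultimately be played against $\limsup_{n\to\infty} c_X(n)/n < 3$ together with $\mu \neq \delta_{0^\infty}$ to derive the desired contradiction.
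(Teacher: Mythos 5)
Your proof is correct and is exactly the paper's argument: apply Lemma~\ref{secondbklemma} at index $k+1$ to see that $e_{k+1}b_{k+1}e_{k+1}b_{k+1}$ is a suffix of $b_{k+2}$, then use the second assertion of Lemma~\ref{firstbklemma} to conclude that $b_{k+2}$, and hence that word, is a suffix of $(e_kb_k)^{m_k-1}$. Nothing further is needed.
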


\begin{proof}
If we apply Lemma \ref{secondbklemma} to $k+1$ instead of $k$, we see that $e_{k+1}b_{k+1}e_{k+1}b_{k+1}$ is a suffix of $b_{k+2}$, which, by Lemma \ref{firstbklemma}, is a suffix of $(e_kb_k)^{m_k-1}$.
\end{proof}

Finally, to arrive at a contradiction, we will use the words $e_kb_ke_kb_k$ to construct a minimal subsystem of $X$ other than $\{0^\infty\}$.  To do this, first define $L_k$ to be the length of the longest block of zeros occurring in $e_kb_ke_kb_k$.  Then $L_k$ is also the length of the longest block of zeros in $(e_kb_k)^{m_k - 1}$ (assuming $m_k \geq 3$, which is trivial since $m_k \rightarrow \infty$).  

\begin{proposition} \label{constantsequence}
$(L_k)$ is a constant sequence.
\end{proposition}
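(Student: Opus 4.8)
The plan is to show that the sequence of non-negative integers $(L_k)$ is simultaneously non-increasing and non-decreasing; a sequence with both properties is of course constant. Both monotonicity statements will be read off from the nesting relations already established --- Lemma \ref{secondbklemma} and Proposition \ref{ekbkprop} --- together with the fact, noted just before the statement, that the longest block of zeros in $(e_kb_k)^{m_k-1}$ has length exactly $L_k$.

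First I would prove $L_{k+1}\le L_k$. By Proposition \ref{ekbkprop}, the word $e_{k+1}b_{k+1}e_{k+1}b_{k+1}$ occurs as a contiguous subword of $(e_kb_k)^{m_k-1}$. Hence every maximal block of zeros inside $e_{k+1}b_{k+1}e_{k+1}b_{k+1}$ is contained in some maximal block of zeros of $(e_kb_k)^{m_k-1}$, and the latter has length at most $L_k$; so $L_{k+1}\le L_k$.

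Next I would prove $L_{k+1}\ge L_k$. By Lemma \ref{secondbklemma}, $e_kb_ke_kb_k$ is a suffix of $b_{k+1}$, and $b_{k+1}$ in turn occurs as a subword of $e_{k+1}b_{k+1}e_{k+1}b_{k+1}$; so $e_kb_ke_kb_k$ occurs inside $e_{k+1}b_{k+1}e_{k+1}b_{k+1}$. Since $e_kb_ke_kb_k$ contains a block of zeros of length $L_k$ by definition of $L_k$, so does $e_{k+1}b_{k+1}e_{k+1}b_{k+1}$ (possibly a longer one, if the run extends past the embedded copy); thus $L_{k+1}\ge L_k$. Combining the two inequalities gives $L_{k+1}=L_k$ for every $k$, so $(L_k)$ is constant.

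The only point I expect to need any care --- and it is minor --- is the identity that the longest block of zeros of $(e_kb_k)^{m_k-1}$ equals $L_k$, which is used in the first step and was asserted just before the statement. To justify it one notes that $b_k$ contains a $1$ (since $b_k\ne 0^{n_k}$), hence so does $e_kb_k$, so no block of zeros in $(e_kb_k)^{m_k-1}$ can span an entire period $e_kb_k$; since $m_k\to\infty$ we may assume $m_k-1\ge 3$, so every block of zeros occurring in $(e_kb_k)^{m_k-1}$ lies within a window of length $2|e_kb_k|$, and all such windows are translates of $e_kb_ke_kb_k$. Beyond this bookkeeping the proposition is a purely formal consequence of Lemma \ref{secondbklemma} and Proposition \ref{ekbkprop}, so I anticipate no real obstacle.
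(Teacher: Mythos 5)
Your proof is correct and follows essentially the same route as the paper: $L_{k+1}\le L_k$ from Proposition \ref{ekbkprop} and $L_k\le L_{k+1}$ from Lemma \ref{secondbklemma}, with the identification of $L_k$ as the longest run of zeros in $(e_kb_k)^{m_k-1}$ handled exactly as in the remark preceding the proposition. Your extra justification of that identification (using that $b_k\neq 0^{n_k}$ contains a $1$) is a harmless elaboration of what the paper asserts without comment.
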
  

\begin{proof}
By Proposition \ref{ekbkprop}, $e_{k+1}b_{k+1}e_{k+1}b_{k+1}$ is a suffix of $(e_kb_k)^{m_k - 1}$, so $L_{k+1} \leq L_k$.  On the other hand, by Lemma \ref{secondbklemma}, $e_kb_ke_kb_k$ is a suffix of $b_{k+1}$, so $L_k \leq L_{k+1}$.
\end{proof}

For each $k$, define $y^{(k)}$ to be any point in $X$ of the form \[y^{(k)} = \ell_k \, e_k \, b_k \, . \, e_k \, b_k \, r_k,\] where $\ell_k$ and $r_k$ are left- and right-infinite sequences.  By compactness, a subsequence of $(y^{(k)})$ converges to a point $y \in X$.  But by construction (and applying Proposition \ref{constantsequence}), $\overline{\mathcal O(y)}$ is a subsystem of $X$ that is disjoint from $\{0^\infty\}$.  Therefore $\overline{\mathcal O(y)}$ contains a minimal subsystem other than $\{0^\infty\}$, a contradiction, completing the proof of Theorem~\ref{measuresupintro}.

\begin{flushright}
	$\square$
\end{flushright}

\begin{remark} \label{rem23again} \rm
The assumption of recurrence was only used above to establish that $X$ has a single minimal subsystem. Our results in this paper show that the only transitive $X$ with more than one minimal subsystem and $\limsup_{n \to \infty} \left( \frac{c_X(n)}{n} \right)< 3$ are of the following types. 
\begin{itemize}
	\item $X$ has two periodic minimal subsystems (e.g. the orbit closure of $x =\ldots 000.1111 \ldots$ or $x= 1^{\infty}.0^{n_1} 1 0^{n_2} 1 0^{n_3} 1 \ldots$)
	\item $\pi(X)$ is eventually periodic in both directions and $X$ has one infinite subsystem (e.g., $x=\ldots 000.s \ldots $ where $s$ is a one-sided sequence from a Sturmian system).
\end{itemize}
\end{remark}

We now conclude with a proof of Theorem \ref{measureinfintro}, which establishes an upper bound of $g-1$ on the number of generic measures when $X = \overline{\mathcal{O}(x)}$ for some $x$ that is not eventually periodic in both directions and 
$$\liminf(c_X(n) - gn) = -\infty.$$ 

\subsection{Proof of Theorem \ref{measureinfintro}} \label{proofofmeasureinf}

Assume that $X$ is as above and $x$ is not eventually periodic to the right.  Since $\liminf(c_X(n) - gn) = -\infty$, Lemma~\ref{countinglemma} implies that the number of $n$-letter right-special words is strictly less than $g$ for infinitely many $n$. Therefore, there exists $C \leq g-1$ and a strictly increasing sequence $(n_k)$ such that there are exactly $C$ right-special words of length $n_k$; call them $b^{(i)}_k$ for $1 \leq i \leq C$. We may further assume that \[c_X(n_k) < 2g n_k\] for every $k$ by considering values of $n$ where $c_X(n) - gn$ is smaller than all previous values $c_X(i) - gi$ for $i < n$; see the discussion following Theorem 2.2 in \cite{B} for details. Therefore, by Lemma \ref{rtspecial}, every word of length 
$(2g + 1)n_k$ contains at least one word $b^{(i)}_k$.

For each $k$ and $i$, choose a point $x^{(i)}_k$ with the word $b^{(i)}_k$ appearing in coordinates $0$ through $n_k-1$, and define the measure $\nu^{(i)}_k := \nu_{n_k}^{x^{(i)}_k}$ as discussed at the start of Section \ref{measures}, i.e.,
\[\nu^{(i)}_k = \frac{1}{n_k} \sum_{j = 0}^{n_k - 1} \delta_{\sigma^j x^{(i)}_k}.\] By compactness, we can pass to a subsequence and assume without loss of generality that for all $i$, $\big(\nu^{(i)}_k\big)_k$ converges to a limit $\nu^{(i)}$. We claim that every generic measure is some $\nu^{(i)}$. For this, assume that $\mu$ is an arbitrary generic measure, and let $x \in X$ be generic for $\mu$.


For each $k$, the word $x_0 \cdots x_{(2g+1)n_k - 1}$ contains some $b^{(i_k)}_k$ by Lemma~\ref{rtspecial}, and by again passing to a subsequence, we can assume that $i_k$ is always equal to some fixed $i$. For each $k$, define $m_k \leq 2gn_k$ so that $x_{m_k} \ldots x_{m_k + n_k - 1} = b^{(i)}_k$.

Since $x$ is generic for $\mu$, both $\nu_{m_k}^x$ and $\nu^x_{m_k + n_k - 1}$ converge to $\mu$ in the weak topology. This topology is induced by the metric
\[
d(\mu, \nu) := \sum_{n \in \mathbb{N}} 2^{-n} |\mu([w_n]) - \nu([w_n])|,
\]
where $\{w_n\}$ is an arbitrary enumeration of the set of finite words on $\{0,1\}$. Then for any $\epsilon > 0$, we may choose $K$ so that for all $k > K$, 
$d(\nu_{m_k}^x, \mu), d(\nu_{m_k + n_k}^x, \mu) < \epsilon$. This implies that \[d(m_k \nu_{m_k}^x, m_k \mu) < m_k \epsilon \;\; \textrm{ and } \;\; d((m_k + n_k) \nu_{m_k + n_k}^x, (m_k + n_k) \mu) < (m_k + n_k) \epsilon,\] which together imply that \[d((m_k + n_k) \nu_{m_k + n_k}^x - m_k \nu_{m_k}^x, n_k \mu) < (2m_k + n_k) \epsilon.\]  Therefore \[d(\nu_{n_k}^{\sigma^{m_k} x}, \mu) < \frac{2m_k + n_k}{n_k} \epsilon \leq (4g + 1) \epsilon.\]

Since $\epsilon > 0$ was arbitrary, $\nu_{n_k}^{\sigma^{m_k} x} \rightarrow \mu$. Recall that $\sigma^{m_k} x$ begins with $b^{(i)}_k$, and so $\nu_{n_k}^{\sigma^{m_k} x} \rightarrow \nu^{(i)}$ by definition of $\nu^{(i)}$. Therefore, $\mu = \nu^{(i)}$, and since $\mu$ was an arbitrary generic measure, every generic measure is one of the $\nu^{(i)}$. Since there are $C \leq g-1$ such measures, the proof is complete.

\begin{flushright}
$\square$
\end{flushright}


\end{document}